\pgfplotsset{compat=1.15}
\newcommand{\bb}[1]{\mathbb{#1}}
\newcommand{\cc}[1]{\mathcal{#1}}
\theoremstyle{definition}
\newtheorem{theorem}{Theorem}[section]
\newtheorem{question}[theorem]{Question}
\newtheorem{remark}[theorem]{Remark}
\newtheorem{lemma}[theorem]{Lemma}
\newtheorem{claim}[theorem]{Claim}
\newtheorem{definition}[theorem]{Definition}
\title{Expansion in supercritical random subgraphs of the hypercube and its consequences}
	\date{}
	\author{Joshua Erde$^{*}$, Mihyun Kang$^{*}$ and Michael Krivelevich$^{\ddagger}$ \\ \\
		\today}
	\thanks{$^{*}$ 
		Institute of Discrete Mathematics, 
		Graz University of Technology, 
		Steyrergasse 30,
		8010 Graz,
		Austria,  
		{\tt \{erde,kang\}@math.tugraz.at}.
		Supported by Austrian Science Fund (FWF): I3747\phantom{}}
	\thanks{$^{\ddagger}$ 
		School of Mathematical Sciences, 
		Sackler Faculty of Exact Sciences, 
		Tel Aviv University,  
		Tel Aviv 6997801,
		Israel, 
		{\tt  krivelev@tauex.tau.ac.il}.
		Supported in part by USA-Israel BSF grant 2018267, and by ISF grant 1261/17.}
\begin{document}
	
	\begin{abstract}
It is well-known that the behaviour of a random subgraph of a $d$-dimensional hypercube, where we include each edge independently with probability $p$, undergoes a phase transition when $p$ is around $\frac{1}{d}$. More precisely, standard arguments show that just below this value of $p$ all components of this graph have order $O(d)$ with probability tending to one as $d \to \infty$ (whp for short), whereas Ajtai, Koml\'{o}s and Szemer\'{e}di [Largest random component of a $k$-cube, Combinatorica 2 (1982), no. 1, 1--7; MR0671140] showed that just above this value, in the \emph{supercritical regime}, whp there is a unique `giant' component of order $\Theta\left(2^d\right)$. We show that whp the vertex-expansion of the giant component is inverse polynomial in $d$. As a consequence we obtain polynomial in $d$ bounds on the diameter of the giant component and the mixing time of the lazy random walk on the giant component, answering questions of Bollob\'{a}s, Kohayakawa and {\L}uczak [On the diameter and radius of random subgraphs of the cube, Random Structures and Algorithms 5 (1994), no. 5, 627--648; MR1300592] and of Pete [A note on percolation on $\mathbb{Z}^d$: isoperimetric profile via exponential cluster repulsion, Electron. Commun. Probab. 13 (2008), 377--392; MR2415145]. Furthermore, our results imply lower bounds on the circumference and Hadwiger number of a random subgraph of the hypercube in this regime of $p$ which are tight up to polynomial factors in $d$.
	\end{abstract}

	\maketitle
	
\section{Introduction}
Percolation is a mathematical process, initially studied by Broadbent and Hammersley \cite{BH57} to model the flow of a fluid through a porous medium whose channels may be randomly blocked. The underlying mathematical model is simple: given a graph $G$, usually some sort of lattice-like graph, the \emph{percolated subgraph} $G_p$ is the random subgraph of $G$ obtained by retaining each edge of $G$ independently with probability $p$. For a more detailed introduction to percolation theory see, e.g., \cite{K82,G89,BR06}.

In this paper we are concerned with percolation on the hypercube. The \emph{$d$-dimensional hypercube} $Q^d$ is the graph with vertex set $V\left(Q^d\right) = \{0,1\}^d$ and in which two vertices are adjacent if they differ in exactly one coordinate. Throughout this paper we will write $n := 2^d$ for the order of the hypercube and we note that $|E\left(Q^d\right)| = \frac{nd}{2}$. The hypercube is a ubiquitous object in graph theory and combinatorics, arising naturally in many contexts, in particular due to its interpretation as the Hasse diagram of the partial order on $[d]:=\{1,2,\ldots,d\}$ given by the subset relation.
	
The random subgraph $Q^d_p$ of the hypercube was first studied by Sapo\v{z}enko \cite{S67} and by Burtin \cite{B67}, who showed that $Q^d_p$ has a threshold for connectivity at $p=\frac{1}{2}$; for a fixed constant $p < \frac{1}{2}$, whp\footnote{Throughout the paper, all asymptotics will be considered as $d \to \infty$ and so, in particular, whp (with high probability) means with probability tending to one as $d \to \infty$.} $Q^d_p$ is disconnected, whereas for $p >\frac{1}{2}$, whp $Q^d_p$ is connected. This result was strengthened by Erd\H{o}s and Spencer \cite{ES79} and by Bollob\'{a}s \cite{B83}, who determined the probability of connectivity of $Q^d_p$ when $p$ is close to $\frac{1}{2}$. Bollob\'{a}s \cite{B90} also showed that $p=\frac{1}{2}$ is the threshold for the existence of a perfect matching in $Q^d_p$. Very recently, answering a longstanding open problem, Condon, Espuny D{\'\i}az, Gir{\~a}o, K{\"u}hn and Osthus \cite{CDGKO21} showed that $p=\frac{1}{2}$ is also the threshold for the existence of a Hamilton cycle in $Q^d_p$.

Motivated by results from the \emph{binomial random graph model}, it was conjectured by Erd\H{o}s and Spencer \cite{ES79} that the component structure of $Q^d_p$ should undergo a \emph{phase transition} at $p=\frac{1}{d}$: it is relatively easy to see, by a coupling with a branching process, that when $p = \frac{1-\epsilon}{d}$ for $\epsilon>0$, whp all components of $Q^d_p$ have order $O(d)$, but they conjectured that when $p= \frac{1+\epsilon}{d}$, whp $Q^d_p$ contains a unique `giant' component $L_1\left(Q^d\right)$, whose order is linear in $n=2^d$. This conjecture was confirmed by Ajtai, Koml\'{o}s and Szemer\'{e}di \cite{AKS81}. 

\begin{theorem}[Ajtai, Koml\'{o}s and Szemer\'{e}di \cite{AKS81}]\label{t:AKSsimple}
Let $\epsilon > 0$ and let $p =\frac{1+\epsilon}{d}$. Then there is a constant $\gamma > 0$ such that whp $Q^d_p$ contains a component of order at least $\gamma n$.
\end{theorem}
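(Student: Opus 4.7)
The plan is a two-round (``sprinkling'') argument, which is the standard route to establishing a giant component in percolation on vertex-transitive graphs with good expansion. Choose $\epsilon_1 = \epsilon/2$, set $p_1 = (1+\epsilon_1)/d$, and let $p_2 = \Theta(1/d)$ be such that $1 - (1-p_1)(1-p_2) = p$. Then $Q^d_p$ has the same law as $Q^d_{p_1} \cup Q^d_{p_2}$ with the two factors independent. I will work first in $Q^d_{p_1}$ to produce many ``medium-sized'' components, and then use $Q^d_{p_2}$ to merge them. Fix a threshold $k = k(d)$ with $k = \omega(d)$ and $k = o(n)$, e.g. $k = d^2$.

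\textbf{Step 1: many vertices in medium components of $Q^d_{p_1}$.} Fix a vertex $v$ and explore its $Q^d_{p_1}$-component by breadth-first search, stopping once $k$ vertices are revealed. As long as fewer than $k$ vertices are exposed, each active boundary vertex has at least $d-k$ unexplored neighbours in $Q^d$, so the number of children at each step stochastically dominates a $\mathrm{Bin}(d-k,p_1)$ variable, whose mean exceeds $1+\epsilon/3$ for $d$ large. Coupling the exploration with a supercritical Galton--Watson process with this offspring distribution yields $\Pr[\,|C(v)| \geq k\,] \geq \rho - o(1)$, where $\rho = \rho(\epsilon) > 0$ is the survival probability. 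Linearity of expectation gives $\mathbb{E}[X] \geq \rho n/2$ for $X$ the number of vertices lying in $Q^d_{p_1}$-components of size $\geq k$. A second moment estimate, exploiting that explorations started from far-apart vertices in $Q^d$ are nearly independent until they reach size $k$, yields $X \geq \delta n$ whp with $\delta = \rho/4$.

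\textbf{Step 2: sprinkling merges medium components into a giant.} Let $\mathcal{L}$ be the union of the $Q^d_{p_1}$-components of size $\geq k$, so whp $|\mathcal{L}| \geq \delta n$ and $\mathcal{L}$ is comprised of at most $\delta n/k$ components. Set $\gamma = \delta/3$. Suppose for contradiction that $Q^d_{p_1}\cup Q^d_{p_2}$ has no component of size $\geq \gamma n$. Then one may 2-colour the components of $\mathcal{L}$ so that the red part $A$ satisfies $|A| \in [\gamma n, 2\gamma n]$ and the blue part $B := \mathcal{L}\setminus A$ satisfies $|B| \geq \gamma n$, with no edge of $Q^d_{p_2}$ between $A$ and $B$. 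By Harper's edge-isoperimetric inequality in $Q^d$, any set of size $\Theta(n)$ has $\Theta(n)$ cube-edges leaving it; combined with $|A|,|B| = \Theta(n)$ this will give $\Omega(n)$ cube-edges between $A$ and $B$. The probability that $Q^d_{p_2}$ contains none of them is $(1-p_2)^{\Omega(n)} = \exp(-\Omega(n/d))$, and a union bound over the at most $2^{\delta n/k}$ colourings yields $2^{\delta n/k}\cdot \exp(-\Omega(n/d)) = o(1)$ because $k = \omega(d)$. This contradicts the supposed absence of a $(\geq \gamma n)$-component.

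\textbf{Main obstacle.} The subtle point is the transition from ``edge boundary of $A$ in $Q^d$'' (given by Harper) to ``cube-edges between $A$ and $B$''. A priori, a large fraction of the $\Theta(n)$ edges leaving $A$ could land in $V\setminus\mathcal{L}$ rather than in $B$. The honest way around this is either (i) to average over random translations of $\mathcal{L}$ under the transitive action of $\{0,1\}^d$ on $Q^d$ to show that typically a positive fraction of $A$'s edge-boundary lands in $B$, or (ii) to argue directly that any two linear-sized subsets of the hypercube enjoy $\Omega(n)$ edges between them, via a more refined isoperimetric estimate for two-set interactions. This step is the technical heart of the Ajtai--Koml\'os--Szemer\'edi argument; once it is in place, the remainder of the proof is assembled by the sprinkling framework above.
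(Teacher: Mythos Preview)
The paper does not prove this theorem; it is cited as a result of Ajtai, Koml\'{o}s and Szemer\'{e}di \cite{AKS81} (with the sharper form stated as Theorem~\ref{t:AKSfine}). So there is no ``paper's own proof'' to compare against. That said, your outline follows the classical two-round sprinkling architecture of the original AKS argument, and Step~1 is essentially correct.

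The difficulty is entirely in Step~2, and you are right to flag it. However, both of your proposed resolutions fail. Option (ii) is simply false: take $A=\{v:v_1=v_2=0\}$ and $B=\{v:v_1=v_2=1\}$; these are disjoint sets of size $n/4$ with $e_{Q^d}(A,B)=0$, since any edge of $Q^d$ flips a single coordinate. So there is no ``refined isoperimetric estimate'' guaranteeing $\Omega(n)$ edges between two arbitrary linear-sized sets. Option (i) does not make sense in this setting either: $\mathcal{L}$ is determined by the random graph $Q^d_{p_1}$, not a fixed set you may translate at will; translating $\mathcal{L}$ means looking at a different realisation of the percolation, and you cannot average over realisations inside a whp argument about a single realisation.

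What the genuine argument does instead is prove a \emph{uniformity} or \emph{density} statement about $\mathcal{L}$ after the first round --- roughly, that every not-too-small subcube already contains many vertices of $\mathcal{L}$ --- and then use this to produce many short $A$--$B$ paths in $Q^d$ (of bounded length greater than $1$, not single edges). Because the paths are short, a constant fraction survive sprinkling with the right exponential bound, and because $\mathcal{L}$ is dense, one can guarantee $\Omega(n)$ such paths regardless of how $A$ and $B$ sit inside the cube. This is precisely the mechanism behind Lemmas~\ref{l:dense} and~\ref{l:familyofpaths} in the present paper, which carry out this step (for a closely related purpose) by extending $A$ and $B$ to a full partition $A'\cup B'=V(Q^d)$ with every vertex of $A'$ within distance~$2$ of $A$, applying Harper to $A',B'$, and then chaining matchings to build $A$--$B$ paths of length at most~$5$.
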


These results were later extended to a wider range of $p$, describing more precisely the component structure of $Q^d_p$ when $p = \frac{1+\epsilon}{d}$ with $\epsilon=o(1)$ by Bollob\'{a}s, Kohayakawa and {\L}uczak \cite{BKL92}, by Borgs, Chayes, van der Hofstad, Slade and Spencer \cite{BCVSS06} and by Hulshof and Nachmias \cite{HN20}, with the correct width of the critical window in this model being only recently identified by van der Hofstad and Nachmias \cite{HN17}. McDiarmid, Scott and Withers \cite{MSW21} also give a description of the component structure of $Q^d_p$ for fixed $p \in \left(0, \frac{1}{2}\right)$, when $p$ is quite far from the critical window, but still below the connectivity threshold. For a more detailed background on the phase transition in this model, see the survey of van der Hofstad and Nachmias \cite{HN14}.

In this paper we are interested in the typical structural properties of the giant component $L_1$ of $Q^d_p$ in the \emph{supercritical regime}, where $p=\frac{1+\epsilon}{d}$ for some fixed $\epsilon > 0$. Our main result, from which we will be able to deduce a lot of structural information about $L_1$, concerns its \emph{expansion} properties, in particular vertex-expansion. Informally, a graph has good vertex-expansion if all sufficiently small vertex sets have a large vertex boundary, expressing a kind of discrete isoperimetric inequality. The notion of graph expansion seems to be quite a fundamental one to the study of graphs, demonstrating a deep link between the geometric and structural properties of a graph, its algebraic spectrum and also the mixing time of the random walk on the graph. For these reasons and more, graph expansion has turned out to have fundamental importance in many diverse areas of discrete mathematics and computer science. For a comprehensive introduction to expander graphs, see the survey of Hoory, Linial and Widgerson \cite{HLW06}. In particular, notions of expansion have turned out to be a powerful tool in the study of random structures. See, for example, the survey paper of Krivelevich \cite{K19}. 

In order to motivate our results and methods, let us discuss briefly what is known in a simpler model of percolation, the binomial random graph. The binomial random graph $G(d+1,p)$, introduced by Gilbert \cite{G59}, is a percolated subgraph of the complete graph $K_{d+1}$, where we retain each edge with probability $p$. As a particularly simple model of percolation, where the underlying graph $G$ lacks the geometric structure of $\bb{Z}^d$ or $Q^d$, the binomial random graph has been extensively studied. A particularly striking feature of this model is the phase transition that it undergoes at $p = \frac{1}{d}$, exhibiting vastly different behaviour when $p = \frac{1 - \epsilon}{d}$ to when $p = \frac{1+\epsilon}{d}$ (where $\epsilon$ is a positive constant). 

More precisely, it follows from results of Erd\H{o}s and R\'{e}nyi \cite{ER59} that when $p=\frac{1-\epsilon}{d}$ for a fixed $\epsilon>0$, whp every component of $G(d+1,p)$ has order at most $O(\log d)$ and is either a tree or unicyclic, whereas when $p=\frac{1+\epsilon}{d}$, whp $G(d+1,p)$ contains a unique `giant component' $L_1\left(G(d+1,p)\right)$ of order $\Omega(d)$, whose structure is quite complex. We note that this is only a very broad picture of the phase transition in this model, and much more precise results are known, in particular extending these results into the \emph{weakly supercritical regime} where $\epsilon = o(1)$ and $\epsilon^3 d \to \infty$, see, for example, the works of Bollob\'{a}s \cite{B84} and {\L}uczak \cite{L90}. However this is not the focus of our paper. 

Much subsequent work has focused on the structural properties of the giant component $L_1 = L_1(G(d+1,p))$ in the supercritical regime. For example, a well-known result of Ajtai, Koml\'{o}s and Szemer\'{e}di \cite{AKS81a} shows that in this regime whp $L_1$ contains a path of length $\Omega(d)$, from which it is easy to deduce that the \emph{circumference}, the length of the largest cycle, of $L_1$ is of order $\Omega(d)$. Fountoulakis, K{\"u}hn and Osthus \cite{FKO08} showed that whp the \emph{Hadwiger number}, the size of the largest complete minor, of $L_1$ is of order $\Omega(\sqrt{d})$. Chung and Lu \cite{CL01} showed that whp $L_1$ has diameter $O(\log d)$ and this result was later strengthened, in particular determining the correct leading constant, by work of Fernholz and Ramachandran \cite{FR07} and of Riordan and Wormald \cite{RW10}. Benjamini, Kozma and Wormald \cite{BKW14} and Fountoulakis and Reed \cite{FR08} showed that whp the mixing time of the lazy random walk on $L_1$ is $O\left((\log d)^2\right)$ and Berestycki, Lubetzky, Peres and Sly \cite{BLPS18} showed that if we start the lazy random walk from a uniformly chosen vertex of $L_1$, then whp the mixing time is $O(\log d)$ (for definitions related to mixing time see Section \ref{s:mixing}). We note that this is just a small subset of the results known about the supercritical random graph, chosen judiciously for comparison to our results later. For more general background on the theory of random graphs, see \cite{B01,JLR00,FK16}.

A lot of structural information about the giant component of $G(d+1,p)$, including many of the results mentioned above, can be deduced as consequences of its expansion properties. Indeed, it is known that the expansion properties of a graph can be linked to various structural graph properties, for example its diameter and circumference, and furthermore there are well-known links between expansion and mixing times of Markov chains. See Sections \ref{s:prelim} and \ref{s:mixing} for more precise statements.

Given a graph $G$ and a subset $S \subseteq V(G)$ we write $N_G(S)$ for the \emph{external neighbourhood} of $S$ in $G$, that is, the set of vertices in $V(G) \setminus S$ which have a neighbour in $S$ and we write $S^c := V(G) \setminus S$ and $e_G(S,S^c)$ for the number of edges between $S$ and $S^c$ in $G$.  
\begin{definition}
We say a graph $G$ is an \emph{$\alpha$-expander} if $|N_G(S)| \geq \alpha |S|$ for every $S \subseteq V(G)$ such that $|S| \leq \frac{|V(G)|}{2}$, where $\alpha$ is the \emph{expansion ratio}. 
\end{definition}
Similarly we say a graph $G$ is an \emph{$\alpha$-edge-expander} if $e_G(S,S^c) \geq \alpha |S|$ for all $S$ such that $\sum_{v \in S} d_G(v) \leq |E(G)|$.

It can easily be seen that $G(d+1,p)$ is whp not an expander in the supercritical regime, since the graph is likely not connected. In fact, standard results imply that even the \emph{$2$-core}, the largest subgraph of minimum degree at least two, of the giant component of $G(d+1,p)$ does not have constant vertex- or edge-expansion, since it typically contains logarithmically long \emph{bare paths}, paths in which each internal vertex has degree two. However, it was shown by Benjamini, Kozma and Wormald \cite{BKW14} that in the supercritical regime whp the giant component of $G(d+1,p)$ is a \emph{decorated expander}, which roughly means that it has a linear sized subgraph which is an $\alpha$-edge-expander for some constant $\alpha >0$, and the deletion of this subgraph splits the giant component into logarithmically small pieces. It is not hard to show that in the supercritical regime the existence of a linear sized subgraph which is an $\alpha$-edge-expander implies the existence of one which is an $\alpha'$-expander for some $\alpha' >0$, the existence of which was also shown by Krivelevich \cite{K18} using general considerations of expansion in locally sparse graphs. We also note that the work of Ding, Lubetzky and Peres \cite{DLP14} gives a particularly simple model contiguous to the giant component in the supercritical regime, which implies that whp the \emph{kernel} of the giant component, the graph obtained by contracting all bare paths in the $2$-core, is an $\alpha$-expander for some fixed $\alpha > 0$, and from which it is possible to determine the likely expansion properties of the giant component and its $2$-core. Then, using these connections between the structural properties of a graph and its expansion mentioned previously, many properties of the giant component of $G(d+1,p)$ can be deduced as consequences of the results of Benjamini, Kozma and Wormald \cite{BKW14}, of Krivelevich \cite{K18} and of Ding, Kim, Lubetzky and Peres \cite{DLP14}.

More recently, random subgraphs $G_p$ of an arbitrary graph $G$ of large minimum degree $\delta(G) \geq d$ have been studied. It has been observed that  some of the complex behaviour which occurs whp in $G(d+1,p)$ once we pass the critical point of $p=\frac{1}{d}$ also occurs whp in $G_p$ in the same regime of $p$. For example, when $p= \frac{1+\epsilon}{d}$ for $\epsilon > 0$, it has been shown that whp $G_p$ contains a path or cycle of length at least linear in $d$, see \cite{KS13,KS14,EJ18}. Furthermore, for this range of $p$ it has been shown by Frieze and Krivelevich \cite{FK13} that whp $G_p$ is non-planar and by Erde, Kang and Krivelevich \cite{EKK20} that in fact, whp $G_p$ has Hadwiger number $\tilde{\Omega}\left(\sqrt{d}\right)$.\footnote{The notation $\tilde{\Omega}\left(\cdot\right)$ here is hiding a polylogarithmic factor in $d$.} 

Whilst the model $G(d+1,p)$ shows that these results are optimal when $G$ can be an arbitrary graph, for specific graphs, and in particular for $G = Q^d$, they may be far from the truth. Indeed, it is plausible that circumference and Hadwiger number of $Q^d_p$ could be exponentially large in $d$ in the supercritical regime. Furthermore, since the host graph $G$ can be chosen arbitrarily, we cannot hope to prove much about global properties of $Q^d_p$, such as the diameter or mixing time, by considering the far more general model of arbitrary random subgraphs $G_p$.

The main aim of this paper is to show that whp the giant component of $Q^d_p$ in the supercritical regime has good expansion properties. Given constants $\alpha,\beta >0$ and a statement $A$, we will write `Let $\alpha \ll \beta$. Then $A$ holds' to indicate that there is some fixed, implicit function $f$ such that $A$ holds for all $\alpha \leq f(\beta)$.

\begin{theorem}\label{t:expansionsimple1}
Let $\epsilon > 0$, let $p=\frac{1+\epsilon}{d}$ and let $L_1$ be the largest component of $Q^d_p$. Then there exists a constant $\beta>0$ such that whp $L_1$ is a $\beta d^{-5}$-expander.
\end{theorem}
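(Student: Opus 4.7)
We adopt a sprinkling strategy. Decompose $p = p_1 + p_2 - p_1 p_2$ with $p_1 = (1+\epsilon/2)/d$ and $p_2$ of order $\epsilon/d$, so that both $p_1$ and $p_2$ are supercritical and $Q^d_p$ has the same distribution as $Q^d_{p_1}\cup Q^d_{p_2}$ with the two percolations independent.  Applying Theorem~\ref{t:AKSsimple} to $p_1$ gives whp a unique giant component $L^{(1)}$ of $Q^d_{p_1}$ with $|V(L^{(1)})| \geq \gamma n$ for some $\gamma = \gamma(\epsilon) > 0$, which we condition on.

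\textbf{Key structural lemma.}  The crux of the proof is to establish that whp $V(L^{(1)})$ is evenly distributed in $V(Q^d)$: for every $S\subseteq V(Q^d)$ with $|S|\leq n/2$,
\[
 |N_{Q^d}(S) \cap V(L^{(1)})| \geq c\,|S|/d^{a}
\]
for some absolute constants $c>0$ and $a \leq 4$. Its proof combines Harper's vertex-isoperimetric inequality on $Q^d$, which gives a lower bound on $|N_{Q^d}(S)|$ in terms of $|S|$, with a concentration argument: for a deterministic set $A\subseteq V(Q^d)$, the random variable $|A \cap V(L^{(1)})|$ should concentrate around $\gamma|A|$ because each vertex lies in $V(L^{(1)})$ with probability at least $\gamma$ and the non-giant components in $Q^d_{p_1}$ have exponential tails in the supercritical regime. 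The concentration must be sharp enough to survive a $\binom{n}{|S|}$ union bound, which is the technical heart of the matter.

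\textbf{From the lemma to the theorem.}  Given the structural lemma, the expansion follows by one further sprinkling step. Fix $S \subseteq V(L_1(Q^d_p))$ with $|S| \leq |V(L_1)|/2$, and set $T := N_{Q^d}(S) \cap V(L^{(1)})$, so $|T|\geq c|S|/d^a$. Each $v\in T$ has at least one $Q^d$-edge to $S$, and if any such edge lies in the independent copy $Q^d_{p_2}$ (which happens with probability at least $p_2 = \Omega(1/d)$) then $v\in N_{L_1}(S)$, since $v\in V(L^{(1)}) \subseteq V(L_1)$. The $p_2$-events for different $v\in T$ involve disjoint edge sets and are therefore mutually independent, so a Chernoff bound produces $|N_{L_1}(S)|\geq \Omega(p_2|T|) = \Omega(|S|/d^{a+1})$ with enough probability to absorb the union bound over $S$. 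Choosing the constants so that $a+1 \leq 5$ yields the claimed $\beta d^{-5}$-expansion.

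\textbf{Main obstacle.}  The structural lemma is the bottleneck, particularly for $|S|$ polynomial in $d$: in that regime the $Q^d$-boundary $N_{Q^d}(S)$ may itself be only polynomial in $d$, so a direct Chernoff-type bound on the fraction of its vertices in $V(L^{(1)})$ is too weak to beat the $\binom{n}{|S|}$ union bound. Overcoming this seems to require either splitting $p_1$ further into two sub-rounds, so that membership in $V(L^{(1)})$ is controlled by a favourable local event at each vertex, or importing an exponential cluster-repulsion mechanism in the style of Pete's treatment of percolation on $\bb{Z}^d$. A parallel difficulty arises in making the Chernoff step of the third paragraph uniform in $S$; this may also force the structural lemma to be size-adaptive, giving stronger expansion bounds for smaller $|S|$ where the union-bound cost is higher.
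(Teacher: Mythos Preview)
Your proposal identifies the right skeleton---sprinkling, a structural statement about how $L^{(1)}$ sits inside $Q^d$, then a second sprinkling to manufacture edges---but the two obstacles you flag at the end are not side issues to be patched later; they are precisely where the argument fails, and the fixes you suggest (further subdividing $p_1$, or importing cluster repulsion) are not what is needed.

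The decisive problem is the union bound over $S$. For a fixed $S$ of size $s$, your Chernoff step gives failure probability at best $\exp\bigl(-\Omega(|T|p_2)\bigr) = \exp\bigl(-\Omega(s/d^{a+1})\bigr)$, whereas the number of candidate sets is $\binom{n}{s}$, of order $\exp\bigl(s(d-\log_2 s)\bigr)$. No value of $a\ge 0$ closes this: you are short by a factor of roughly $d^{a+2}$ in the exponent. Exactly the same arithmetic blocks any direct proof of your structural lemma by concentrating $|N_{Q^d}(S)\cap V(L^{(1)})|$ and union-bounding over $S$. Making the lemma ``size-adaptive'' does not help, because the deficit is multiplicative in the exponent, not additive.

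The paper avoids a union bound over $S$ altogether. After the first round it cuts a spanning tree of $L'_1$ into connected \emph{pieces} of size between (roughly) $d$ and $d^2$. For a partition of the collection of pieces into $\{\mathcal{C}_A,\mathcal{C}_B\}$ with $|\bigcup\mathcal{C}_A|=t$, the smaller side consists of at most $O(t/d)$ pieces, so the number of such partitions is at most $n^{O(t/d)}=\exp(O(t))$---and this \emph{does} lose to a Chernoff-type bound with exponent $\Omega(t)$. The analogue of your structural lemma is a path-family statement: for every such partition there are $\Omega\bigl(t(1-\tfrac{\log_2 t}{d})\bigr)$ vertex-disjoint $A$--$B$ paths of length at most five in $Q^d_{q_2}$. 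This is proved not by intersecting $N_{Q^d}(S)$ with $V(L^{(1)})$ but via a density lemma (every vertex of $Q^d$ lies within distance two of $\Omega(d^2)$ vertices of $L'_1$), which lets one extend $A,B$ to a bipartition $A',B'$ of all of $V(Q^d)$, apply edge-isoperimetry to $(A',B')$, and then pull the resulting boundary edges back to $A$ and $B$ through a chain of random matchings.

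Once the path-family claim holds for all partitions of the pieces, expansion of an arbitrary $S\subseteq V(L_1)$ is deduced \emph{deterministically}: split $S$ into its part outside $L'_1$, its part in pieces that $S$ meets but does not swallow, and its part in pieces wholly inside $S$, and handle each case separately. The union bound is over partitions of $\Theta(n/d)$ objects, never over subsets of $V(Q^d)$; that coarsening is the idea your outline is missing.
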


Furthermore, we show that whp the giant component of $Q^d_p$ contains an almost spanning subgraph with much better expansion.

\begin{theorem}\label{t:expansionsimple2}
Let $0 < \alpha \ll \epsilon$, let $p=\frac{1+\epsilon}{d}$ and let $L_1$ be the largest component of $Q^d_p$. Then there exist a constant $\beta>0$ and a subgraph $H$ of $L_1$ of size at least $(1-\alpha)|V(L_1)|$ such that whp $H$ is a $\beta d^{-2} (\log d)^{-1}$-expander.
\end{theorem}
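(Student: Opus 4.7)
The plan is to combine a sprinkling decomposition with Harper's edge-isoperimetric inequality on the cube. Write $p=p_1+p_2-p_1 p_2$ with $p_1=(1+\epsilon/2)/d$ and $p_2=\Theta(\epsilon/d)$; then $Q^d_p$ has the same distribution as the edge-union of two independent copies $Q^d_{p_1}$ and $Q^d_{p_2}$. By Theorem~\ref{t:AKSsimple}, $Q^d_{p_1}$ already contains a giant component $L_1^{(1)}$, and a standard small-component estimate shows that for a suitable choice of $p_1$, all but at most $\frac{\alpha}{2}|V(L_1)|$ vertices of $L_1$ lie in $L_1^{(1)}$.

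I would then build $H$ by iterative peeling: start with $H_0=L_1$, and while there exists $S\subseteq V(H_i)$ with $|S|\leq|V(H_i)|/2$ and $|N_{H_i}(S)|<\beta d^{-2}(\log d)^{-1}|S|$, set $H_{i+1}:=H_i-S$ and place $S$ into a discard pile $B$. By construction the terminal graph is a $\beta d^{-2}(\log d)^{-1}$-expander, so the task reduces to showing $|B|\leq\frac{\alpha}{2}|V(L_1)|$. For such a candidate $S$, the small vertex-neighborhood forces $e_{H_i}(S,V(H_i)\setminus S)\leq d|N_{H_i}(S)|<\beta d^{-1}(\log d)^{-1}|S|$, while Harper's edge-isoperimetric inequality guarantees $e_{Q^d}(S,S^c)\geq|S|\log_2(n/|S|)\geq|S|$. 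Hence a constant fraction of the $Q^d$-boundary edges of $S$ must be absent from $Q^d_p$, and in particular from the sprinkled copy $Q^d_{p_2}$; by a Chernoff estimate this event has probability at most $\exp(-\Omega(p_2|S|\log_2(n/|S|)))=\exp(-\Omega(|S|\log_2(n/|S|)/d))$ for a fixed $S$. Translating the resulting surviving sprinkled edge-boundary into a vertex-boundary via the bound $|N_{L_1}(S)|\geq e_{L_1}(S,S^c)/d$ produces the target expansion up to the $\log d$ factor.

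The main obstacle, and the source of the $(\log d)^{-1}$ slack in the expansion rate, is the union bound over candidate bad sets. The Chernoff exponent above scales as $|S|\log_2(n/|S|)/d$, whereas the naive count $\binom{n}{|S|}\leq\exp(|S|\log(en/|S|))$ exceeds this by a factor of $\Theta(d)$. To close the gap I would split on the size of $S$: for small sets one exploits that a bad $S$ has small edge-boundary in $Q^d_p$ and is therefore among relatively few sparse cuts of $L_1^{(1)}$, whose enumeration (via, e.g., Karger-type cut-counting) is $\exp(O(|S|\log n/(d\log d)))$ and so comfortably absorbed by the Chernoff exponent; for sets comparable to $|V(L_1)|/2$, one instead bounds the total vertex mass of all bad sets directly using Harper's inequality, noting that any family of disjoint large bad sets would force more $Q^d$-boundary to be missing than is stochastically plausible. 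Adding these contributions to the small-component mass from the first sprinkling step gives the final bound on $|B|$ and completes the argument.
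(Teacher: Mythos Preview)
Your proposal has two genuine gaps that the paper's argument specifically addresses.

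\textbf{First gap: boundary edges need not land in $L_1$.} You argue that if $S$ has small vertex-neighbourhood in $H_i$, then $e_{H_i}(S,V(H_i)\setminus S)$ is small, and hence ``a constant fraction of the $Q^d$-boundary edges of $S$ must be absent from $Q^d_p$''. This inference fails: the $Q^d$-boundary edges of $S$ may well be present in $Q^d_p$ but terminate at vertices outside $L_1$ altogether, or at vertices already placed in the discard pile $B$. Neither event forces anything to be absent from $Q^d_p$, so the Chernoff estimate on missing sprinkled edges does not apply. Consequently the bound $|N_{L_1}(S)|\geq e_{L_1}(S,S^c)/d$ cannot be fed by the Harper boundary, because there is no reason for the sprinkled boundary edges of $S$ to reach $V(L_1)\setminus S$. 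The paper handles exactly this issue via Lemma~\ref{l:dense} (every vertex of $Q^d$ is within distance two of $\Omega(d^2)$ vertices of $L'_1$) together with Lemma~\ref{l:familyofpaths}, which extends each Harper boundary edge to a path of length at most five ending \emph{inside} $L'_1$. This is the mechanism that converts Harper's inequality into expansion inside the giant component, and your sketch has no substitute for it.

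\textbf{Second gap: the union bound.} You correctly identify that the Chernoff exponent $\Theta(|S|\log_2(n/|S|)/d)$ is a factor $d$ short of beating $\binom{n}{|S|}$, but your proposed fixes do not close the gap. ``Karger-type cut-counting'' bounds the number of near-minimum \emph{edge} cuts in a graph with a known minimum cut; here you need to enumerate vertex sets $S$ with small $L_1$-boundary, and there is no evident graph on which Karger's bound yields the claimed $\exp(O(|S|\log n/(d\log d)))$ count (indeed, the minimum cut of $L_1$ is just $1$, so near-minimum cut counting gives nothing). For large $S$, the remark that ``disjoint large bad sets would force more missing boundary than plausible'' does not address the issue either, since the bad sets produced by peeling are not disjoint from one another's boundaries. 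The paper's solution is to pre-partition $V(L'_1)$ into connected pieces of size $\Theta(d\log d)$ using Lemma~\ref{l:treedecomp}, so that there are only $O(n/(d\log d))$ pieces and hence only $2^{O(n/(d\log d))}$ coarse partitions to union-bound over; this matches the Chernoff exponent for sets of linear size. The $(\log d)^{-1}$ in the final expansion rate comes precisely from the piece size, not from any cut-counting argument.

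In short, the peeling framework you set up is the same as the paper's final deduction, but the hard input --- that every $S\subseteq V(L_1)$ with $|S|\geq\alpha n$ already satisfies $|N_{L_1}(S)|\geq\beta|S|d^{-2}(\log d)^{-1}$ --- requires the density lemma, the path-family lemma, and the piece decomposition, none of which your outline provides.
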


We will be able to deduce certain structural consequences from the vertex-expansion of the giant component which are almost optimal, up to polynomial factors in $d$.

The first consequence will be a bound on the mixing time of the lazy random walk on the giant component $L_1$ of $Q^d_p$. Answering a well-known question (see, e.g., Pete \cite{P08} and van der Hofstad and Nachmias \cite{HN17}), we show that whp the mixing time of this random walk is polynomial in $d$.

\begin{theorem}\label{t:mixingtime}
Let $\epsilon >0$, let $p=\frac{1+\epsilon}{d}$ and let $L_1$ be the largest component in $Q^d_p$. Then whp the mixing time of the lazy random walk on $L_1$ is $O\left( d^{11}\right)$.
\end{theorem}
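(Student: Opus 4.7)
The plan is to convert the vertex-expansion of $L_1$ guaranteed by Theorems \ref{t:expansionsimple1} and \ref{t:expansionsimple2} into a bound on the conductance of the lazy random walk on $L_1$, and then to appeal to Cheeger's inequality and the standard spectral estimate of the mixing time of a reversible Markov chain to obtain the claim.

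First, since $L_1$ is a subgraph of $Q^d$ its maximum degree is at most $d$, and so $|E(L_1)| \leq nd/2$ and the stationary distribution $\pi$ of the lazy random walk on $L_1$ satisfies $\pi_{\min} \geq 1/(2|E(L_1)|) \geq 1/(nd)$, giving $\log(1/\pi_{\min}) = O(d)$. To convert vertex-expansion into a conductance bound, I would observe that if $|S| \leq |V(L_1)|/2$, then $e_{L_1}(S,S^c) \geq |N_{L_1}(S)| \geq \alpha|S|$; combined with the trivial upper bound $\sum_{v \in S} d_{L_1}(v) \leq d|S|$, this yields conductance $\Phi(S) \geq \alpha/d$. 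Applying Theorem \ref{t:expansionsimple1} with $\alpha = \Omega(d^{-5})$ gives $\Phi(L_1) = \Omega(d^{-6})$.

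Cheeger's inequality for reversible Markov chains now yields a spectral gap $\lambda \geq \Phi^2/2 = \Omega(d^{-12})$, and the standard bound $\tau_{\mathrm{mix}} \leq C\lambda^{-1}\log(1/\pi_{\min})$ gives $\tau_{\mathrm{mix}} = O(d^{13})$ as a first pass. To sharpen this to $O(d^{11})$, I would pass to the almost-spanning subgraph $H$ of Theorem \ref{t:expansionsimple2}, whose vertex-expansion is $\Omega(d^{-2}(\log d)^{-1})$. Since any subset $S$ of $L_1$ of not-too-small measure must contain a substantial fraction of vertices of $H$, its conductance should inherit the stronger estimate $\Omega(d^{-3}(\log d)^{-1})$, while only sets of very small measure need to appeal to the weaker bound from Theorem \ref{t:expansionsimple1}. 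Plugging these two regimes into a conductance-profile mixing-time bound (such as that of Lov\'asz--Kannan or Fountoulakis--Reed), which controls $\tau_{\mathrm{mix}}$ by an integral of $1/(x\Phi(x)^2)$ over the relevant scale of set sizes, yields the improved exponent.

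The main obstacle is this last step: carefully defining a useful conductance profile on $L_1$ and controlling the interaction between an arbitrary set $S \subseteq V(L_1)$ and the $\alpha |V(L_1)|$ vertices lying outside $H$. In particular, one needs to argue that the edge boundary of $S$ in $L_1$ is well approximated (up to an acceptable multiplicative loss) by the edge boundary of $S \cap V(H)$ in $H$, so that the two regimes of conductance combine cleanly to produce the claimed exponent rather than the weaker $d^{13}$ one would obtain from Theorem \ref{t:expansionsimple1} alone.
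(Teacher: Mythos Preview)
Your first pass is fine and yields $O(d^{13})$, but the proposed sharpening does not reach $O(d^{11})$. The almost-spanning expander $H$ of Theorem~\ref{t:expansionsimple2} only improves the conductance for sets of \emph{linear} size in $n$; for sets $S$ with $|S| = o(n)$ you still only have $\Phi(S) = \Omega(d^{-6})$ from Theorem~\ref{t:expansionsimple1} combined with the trivial volume bound $d_{L_1}(S) \leq d|S|$. In a Lov\'asz--Kannan or Fountoulakis--Reed bound of the form $t_{\mathrm{mix}} \leq C \sum_j \Phi(2^{-j})^{-2}$, there are $\Theta(d)$ dyadic scales below the linear regime, each contributing $\Omega(d^{12})$, so the sum is still $\Omega(d^{13})$. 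The improved conductance at the top $O(1)$ scales is negligible by comparison, so splitting into two regimes as you describe does not help.

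The paper takes a different route: it shows directly that the Cheeger constant satisfies $\Phi(L_1) = \Omega(d^{-5})$, not $\Omega(d^{-6})$, and then applies the simple bound $t_{\mathrm{mix}} \leq 2\Phi^{-2}\log(4/\pi_{\min}) = O(d^{10}\cdot d)$. The missing idea is that the trivial volume bound $d_{L_1}(S) \leq d|S|$ is wasteful. Since it suffices to bound $\Phi(S)$ for \emph{connected} $S$, the paper uses a first-moment argument (Lemma~\ref{l:excess}) to show that whp every connected $S \subseteq V(Q^d_p)$ with $|S| \geq d$ spans at most $C|S|$ edges for a constant $C = C(\epsilon)$. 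Hence either $d_{L_1}(S) = O(|S|)$, in which case the vertex-expansion $|N(S)| \geq \beta|S|d^{-5}$ already gives $\Phi(S) = \Omega(d^{-5})$, or $d_{L_1}(S) \gg |S|$, in which case $e(S,S^c) = d_{L_1}(S) - 2e_{L_1}(S) \geq d_{L_1}(S)/2$ and $\Phi(S) \geq 1/4$. This recovers the lost factor of $d$ in the conductance without any conductance-profile machinery.
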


Next, we consider the diameter of the giant component in $Q^d_p$. Previously, Bollob\'{a}s, Kohayakawa and {\L}uczak \cite{BKL94d} considered the diameter of a random $Q^d$-process. Although their results mainly concern the structure of $Q^d_p$ close to the connectivity threshold, they asked whether the diameter of any component in a typical $Q^d$-process is ever superpolynomial in $d$. 

In fact, since then it has been shown that the diameter of components in the regime close to the critical window can grow even exponentially large in $d$. Hulshof and Nachmias \cite{HN20} show that in the weakly subcritical regime, when $n^{-\frac{1}{3}} \ll \epsilon = o(1)$ and $p=\frac{1-\epsilon}{d}$, whp the maximal diameter of a component in $Q^d_p$ is $(1+o(1)) \epsilon^{-1} \log \left(\epsilon^3 n\right)$, although they mention that this is not achieved by the largest component, which they conjecture to have diameter $\Theta\left(\epsilon^{-1} \sqrt{\log \left(\epsilon^3 n\right)} \right)$. Heydenreich and van der Hofstad \cite{HH11} mention that their methods also show that the diameter of the largest component of the critical percolated hypercube is $\Theta_p\left(n^{\frac{1}{3}}\right)$ and it is also stated by van der Hofstad and Nachmias \cite{HN17} that in the weakly supercritical regime, when $n^{-\frac{1}{3}} \ll \epsilon = o(1)$ and $p=\frac{1+\epsilon}{d}$, whp the diameter of the giant component in $Q^d_p$ is $(1+o(1)) \epsilon^{-1} \log \left(\epsilon^3 n\right)$.

Hence, the question of Bollob\'{a}s, Kohayakawa and {\L}uczak \cite{BKL94d} perhaps only makes sense once we are quite far from the critical window. In this range, we give a polynomial bound on the likely diameter of the giant component.

\begin{theorem}\label{t:diameter}
Let $\epsilon >0$ and let $p=\frac{1+\epsilon}{d}$. Then whp the largest component $L_1$ of $Q^d_p$ has diameter $O\left(d^3\right)$.
\end{theorem}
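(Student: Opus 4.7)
The plan is to deduce the polynomial diameter bound from the vertex-expansion of the subgraph of $L_1$ guaranteed by Theorem~\ref{t:expansionsimple2}, via the standard ball-growth argument, and then to reduce the general case to pairs of vertices inside this subgraph.

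First, I would fix a sufficiently small constant $\alpha > 0$ and invoke Theorem~\ref{t:expansionsimple2} to obtain a subgraph $H \subseteq L_1$ with $|V(H)| \geq (1-\alpha)|V(L_1)|$ which is a $\beta d^{-2}(\log d)^{-1}$-expander. Writing $\alpha_H := \beta d^{-2}(\log d)^{-1}$ and denoting the distance ball in $H$ by $B_H(v,r)$, iterating the defining inequality yields
\[
|B_H(v,r)| \geq \min\bigl\{(1+\alpha_H)^r,\; \tfrac{1}{2}|V(H)|\bigr\}
\]
for every $v \in V(H)$ and every $r \geq 0$. Choosing $r_0 = \Theta\!\bigl(\log|V(H)|/\alpha_H\bigr) = \Theta(d^3 \log d)$ therefore forces $|B_H(u,r_0)|, |B_H(v,r_0)| > |V(H)|/2$ for any $u, v \in V(H)$, so the two balls must intersect inside $V(H)$, giving $d_{L_1}(u,v) \leq d_H(u,v) \leq 2 r_0$.

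To extend the bound to the whole of $L_1$, I would show that every vertex $w \in V(L_1) \setminus V(H)$ lies within distance $O(d)$ of $V(H)$ in $L_1$. This is plausible because the proof of Theorem~\ref{t:expansionsimple2} presumably constructs $H$ by stripping away certain small ``bad'' configurations from $L_1$, each contained in a hypercube neighbourhood of bounded radius and hence automatically close to the retained subgraph. Combining this with the previous step then yields $\mathrm{diam}(L_1) = O(d^3 \log d)$.

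The principal obstacle is eliminating the surplus $\log d$ factor to recover the asserted $O(d^3)$ bound. I would attempt this by a sprinkling argument: write $p = 1 - (1-p_1)(1-p_2)$ with $p_1 = (1+\epsilon/2)/d$ still supercritical and $p_2 = \Theta(d^{-1})$, apply Theorem~\ref{t:expansionsimple2} to $Q^d_{p_1}$ to obtain an expander subgraph $H_1$ of its giant component, and then use the edges retained in the $p_2$-sprinkling---independent of $H_1$---to produce, for any two sufficiently large subsets of $V(H_1)$, at least one connecting edge whp, thereby contracting long paths in $H_1$ by the missing logarithmic factor. A two-round construction of this type is in the spirit of the mixing-time argument of Benjamini, Kozma and Wormald~\cite{BKW14} in the $G(n,p)$ setting, and I expect this sprinkling step to be the technical heart of the proof.
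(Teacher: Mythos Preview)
Your approach via Theorem~\ref{t:expansionsimple2} is natural, but it has two real gaps, and the paper's proof proceeds along a genuinely different line.

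\textbf{Gap 1: the extension from $H$ to $L_1$.} Your claim that every vertex of $V(L_1)\setminus V(H)$ lies within distance $O(d)$ of $V(H)$ is not supported by the actual construction of $H$ in the proof of Theorem~\ref{t:expansionsimple2}. There, $H$ is obtained by deleting a \emph{maximal} set $U$ of size at most $\alpha' n$ with small boundary; nothing in that argument controls the geometry of $U$, and in particular there is no reason $U$ should consist of small localised pieces. The paper bypasses this issue entirely: it works not with $H$ but with the two-round giant $L'_1\subseteq L_1$, and uses Lemma~\ref{l:outsidegiant} to show that every component of $L_1-L'_1$ has order $O(d)$, hence every vertex of $L_1$ is within distance $O(d)$ of $L'_1$.

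\textbf{Gap 2: the $\log d$ factor.} Your sprinkling sketch does not explain how a single extra edge between two large sets shortens a ball-growth argument by a multiplicative $\log d$; this is not a known mechanism and I do not see how to make it work. The paper removes the $\log d$ by a more refined growth argument that exploits the \emph{size-dependent} expansion implicit in Theorem~\ref{t:expansion}~\ref{i:small}, namely the factor $b(s)=1-\frac{\log_2 s}{d}$. Concretely, it runs a ball-growth process inside $L'_1$ where at each step the current set $S(i)$ of size $t_i$ is first saturated to whole pieces of the decomposition $\mathcal{C}(t_i)$ (at diameter cost $r(t_i)=O(b(t_i)^{-1}d)$) and then expanded via Claim~\ref{c:pathsmall}, giving $t_{i+1}\ge t_i(1+c_7 b(t_i))$. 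The point is that $b(t_i)$ is of constant order while $t_i$ is small and only shrinks to order $2^{-j}$ once $t_i$ reaches $2^{(1-2^{-j})d}$; a dyadic analysis over $j$ shows that each scale contributes $O(d^2\cdot 2^j)$ to the total radius, and summing over $j\le \log_2 d$ gives $O(d^3)$ with no logarithmic loss. Your uniform $\alpha_H=\beta d^{-2}(\log d)^{-1}$ discards precisely this gain at small scales, which is where the $\log d$ saving comes from.
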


%(Josh): It would be a little stronger to prove this bound in the $Q^d$-process in the relevant time period, but it may be possible. Also we can perhaps go a little way into the weakly supercritical regime (probably at a significant loss to the bound and a significant bit of extra detail), but we can only really work with $\epsilon$ quite large (something like $d^{-2}$) before we have to start worrying about exactly where the critical point is, which is actually more like $\frac{1}{d-1} +\frac{7}{2d^{3}} + O(d^{-4})$.

Finally, we also consider the circumference and the Hadwiger number of $Q^d_p$.

\begin{theorem}\label{t:cycles}
Let $\epsilon >0$ and let $p=\frac{1+\epsilon}{d}$. Then whp the circumference of $Q^d_p$ is 
\[
\Omega\left( nd^{-2} (\log d)^{-1} \right).
\]
\end{theorem}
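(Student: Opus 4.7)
The plan is to apply Theorem \ref{t:expansionsimple2} and combine it with a general principle relating vertex-expansion to the length of the longest cycle. Concretely, choose $\alpha>0$ sufficiently small in terms of $\epsilon$ so that Theorem \ref{t:expansionsimple2} applies. That theorem then provides whp a subgraph $H\subseteq L_1$ with $|V(H)|\geq (1-\alpha)|V(L_1)|$ which is a $\gamma$-expander, for $\gamma:=\beta d^{-2}(\log d)^{-1}$. Combined with Theorem \ref{t:AKSsimple}, which gives $|V(L_1)|=\Theta(n)$ whp, we obtain $|V(H)|=\Theta(n)$; and $H$ is connected, since positive-ratio vertex-expansion forbids any proper splitting. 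The task reduces to the deterministic statement that a connected $\gamma$-expander on $N$ vertices contains a cycle of length $\Omega(\gamma N)$: applied to $H$, this yields a cycle of length $\Omega(\gamma\cdot|V(H)|)=\Omega(nd^{-2}(\log d)^{-1})$ contained in $H\subseteq L_1\subseteq Q^d_p$, proving the bound.

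To prove the deterministic claim, I would proceed in two stages. First, to locate a long path: performing a depth-first search from an arbitrary vertex of $H$, at any point during the search the vertex set partitions as $A\cup B\cup C$ (processed, stack, unexplored) with no edges between $A$ and $C$, so that $N_H(A)\setminus A\subseteq B$. Choosing the moment when $|A|=\lfloor N/2\rfloor$ and applying vertex-expansion to $A$ gives $|B|\geq\gamma|A|\geq\gamma N/2$, so the stack, which traces a simple path in $H$, has length at least $\gamma N/2-1$. Second, to convert this long path $P$ of length $L=\Omega(\gamma N)$ into a long cycle, the idea is to apply Pósa-style rotation-extension to the endpoints of $P$, using the $\gamma$-expansion of $H$ to enlarge the set of admissible endpoints until a cycle-closing edge between two far-apart endpoints is forced; a general result of this type can be extracted from the expansion literature, in particular the survey of Krivelevich \cite{K19}.

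The main obstacle lies in this final conversion step, because $\gamma$ is sub-constant in $d$ and the classical Pósa rotation lemma (which assumes expansion ratio at least $2$ or $3$) does not apply directly. A more careful iteration of rotations is therefore required, or alternatively a direct analysis of back-edges of a suitably chosen DFS tree of $H$: given the DFS path $v_0v_1\ldots v_L$ together with the bound $|N_H(V(T_v))|\leq\mathrm{depth}(v)$ for every rooted subtree $T_v$, one can bound the number of vertices in any ``strip'' of depths of width $K$ via expansion and conclude, by an averaging argument over all strips, that some back-edge of $H$ must jump by $\Omega(\gamma N)$ in depth, which supplies the desired long cycle. Additional structural information about $H$ (its bounded degree $d$ inherited from $Q^d$, and the fact that it carries linearly many edges) can be brought in here to sharpen the averaging. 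Granting the deterministic lemma in either form, the theorem follows immediately.
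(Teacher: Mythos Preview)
Your approach is correct in outline but takes an unnecessary detour compared to the paper. The paper works directly with $L_1$ rather than passing to the subgraph $H$ of Theorem~\ref{t:expansionsimple2}: it invokes Theorem~\ref{t:expansion}\,\ref{i:large}, which already guarantees that every $S\subseteq V(L_1)$ with $\alpha n\le |S|\le \tfrac{|V(L_1)|}{2}$ has $|N_{Q^d_p}(S)|\ge \beta n d^{-2}(\log d)^{-1}$, and then applies the black-box Theorem~\ref{t:cycleexpander} (from \cite{K19a}) with $k=\tfrac{|V(L_1)|}{2}$ and $t=\beta n d^{-2}(\log d)^{-1}$. That theorem only asks for a lower bound on $|N(W)|$ for sets $W$ with $\tfrac{k}{2}\le |W|\le k$, so the ``large sets only'' expansion of $L_1$ is exactly what is needed; there is no need to manufacture a full $\gamma$-expander $H$ first.

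The deterministic lemma you are trying to establish --- that a connected $\gamma$-expander on $N$ vertices contains a cycle of length $\Omega(\gamma N)$ --- is precisely an instance of Theorem~\ref{t:cycleexpander}, which sits in the paper's preliminaries. So your plan is sound, but you are re-deriving a result the paper simply quotes. Your DFS long-path step is fine; your two suggested routes from path to cycle (iterated P\'osa rotation with sub-constant ratio, or a back-edge/strip averaging argument on a DFS tree) are in the right spirit but, as you acknowledge, not complete --- the rotation argument in particular does not go through without modification when $\gamma=o(1)$. Rather than pushing either, you can simply cite Theorem~\ref{t:cycleexpander} and be done.
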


\begin{theorem}\label{t:minors}
Let $\epsilon >0$ and let $p=\frac{1+\epsilon}{d}$. Then whp the Hadwiger number of $Q^d_p$ is 
\[
\Omega \left( \sqrt{n}d^{-2} (\log d)^{-1} \right).
\]
\end{theorem}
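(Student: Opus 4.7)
The plan is to leverage the almost-spanning expander subgraph given by Theorem \ref{t:expansionsimple2} and convert its vertex-expansion into a large complete minor via a standard path-augmentation argument. By Theorem \ref{t:AKSsimple}, whp $|V(L_1)| = \Theta(n)$, so by Theorem \ref{t:expansionsimple2} (applied with some $\alpha_0 \ll \epsilon$), whp there exists a subgraph $H \subseteq L_1$ with $N := |V(H)| = \Theta(n)$ which is an $\alpha$-expander for $\alpha := \beta d^{-2}(\log d)^{-1}$ and some constant $\beta>0$. It then suffices to establish the general lemma that any $\alpha$-expander on $N$ vertices contains a $K_t$-minor with $t = \Omega(\sqrt{N}\,\alpha)$: plugging in $N = \Theta(n)$ and $\alpha = \Theta(d^{-2}(\log d)^{-1})$ yields the claimed bound $\Omega(\sqrt{n}\,d^{-2}(\log d)^{-1})$.

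To construct the branch sets realising such a minor, I would set $t := \lfloor c\sqrt{N}\,\alpha \rfloor$ for a sufficiently small constant $c>0$, pick arbitrary distinct seed vertices $v_1,\ldots,v_t$ and initialise each branch set as $B_i := \{v_i\}$. Processing each pair $(i,j)$ in turn, I would find a path from $v_i$ to $v_j$ in the residual graph $H \setminus \bigl( \bigcup_{k} B_k \setminus \{v_i,v_j\}\bigr)$ of length at most the diameter of that residual graph, and absorb its internal vertices into $B_i$. The key arithmetic is that the total cost over all $\binom{t}{2}$ pairs is $O(t^2 \alpha^{-1}\log N) = O(N\alpha \log N)$, which is $o(N)$ since $\alpha \log N = \Theta(1/(d\log d)) = o(1)$. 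Hence the residual graph retains $(1-o(1))N$ vertices throughout, leaving ample room to find the required short paths at each step. Once all pairs have been processed, the sets $B_1,\ldots,B_t$ are connected, vertex-disjoint, and pairwise adjacent, hence witness a $K_t$-minor in $H$.

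The main technical obstacle is the lemma in the first paragraph, specifically rigorously controlling the expansion of the residual graph under the successive vertex deletions: the vertex-expansion ratio is not monotone under vertex removal, so care is needed to argue that discarding $o(N)$ vertices from an $\alpha$-expander still leaves a subgraph whose sufficiently large sets expand at rate $\Omega(\alpha)$. This can be done by a direct computation with the expansion inequality, losing only a factor of two in the expansion ratio provided the number of removed vertices is negligible compared to the size of the sets considered. Once this is in place, the standard logarithmic diameter bound $O(\alpha^{-1}\log N)$ for expanders supplies the short paths required at each step, and the construction terminates with the desired complete minor.
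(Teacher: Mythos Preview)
Your route differs from the paper's. The paper does not construct the minor by hand: it applies the Kawarabayashi--Reed separator theorem (stated as Theorem~\ref{t:minorexpander}) directly to $L_1$. If $L_1$ had no $K_t$-minor there would be a set $X$ of size $O(t\sqrt{n})$ whose removal leaves only components of size at most $\tfrac{2}{3}|V(L_1)|$; grouping components gives a set $S$ with $\tfrac{|V(L_1)|}{3}\le|S|\le\tfrac{|V(L_1)|}{2}$ and $N(S)\subseteq X$, whereupon Theorem~\ref{t:expansion}\ref{i:large} forces $|X|\ge\beta n d^{-2}(\log d)^{-1}$ and hence $t=\Omega(\sqrt{n}\,d^{-2}(\log d)^{-1})$ in three lines. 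Your target lemma---that an $\alpha$-expander on $N$ vertices contains a $K_t$-minor with $t=\Omega(\alpha\sqrt N)$---is true, and this separator argument is precisely how one proves it at that strength.

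The direct path-building construction you sketch, however, has a genuine gap. You correctly note that after deleting a set $D$ from an $\alpha$-expander, every set of size at least $2|D|/\alpha$ still expands at rate $\alpha/2$. But this does \emph{not} recover the diameter bound $O(\alpha^{-1}\log N)$ you then invoke, because that bound is obtained by growing a BFS ball from a \emph{single vertex}, far below the threshold $2|D|/\alpha$. With your parameters the final $|D|$ has order $t^2\cdot\alpha^{-1}\log N=\Theta(N\alpha\log N)$, so the threshold $2|D|/\alpha=\Theta(N\log N)$ exceeds $N$ and the residual-expansion guarantee is vacuous. Concretely, when you process a pair $(i,j)$ for which $B_j$ is still the singleton $\{v_j\}$ (your absorption rule permits this), nothing prevents $v_j$ from being cut off from $B_i$ in the residual graph. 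Direct greedy constructions of this kind in the literature lose an extra factor $\sqrt{\log N}=\Theta(\sqrt d)$ over the separator-based bound, so even executed carefully your argument would give only $\Omega(\sqrt{n}\,d^{-5/2}(\log d)^{-1})$.
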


We note that, since whp ${\left|E\left(Q^d_p\right)\right| = O(n)}$ when $p = O\left(\frac{1}{d}\right)$ and the Hadwiger number $h(G)$ of a graph $G$  satisfies $\binom{h(G)}{2} \leq |E(G)|$, it follows that both of these results are optimal up to polynomial terms in $d$. We note further that a bound on the likely circumference of $Q^d_p$ was also obtained in concurrent work by Haslegrave, Hu, Kim, Liu, Luan and Wang \cite{HHKLLW21}, who showed the likely existence of a cycle of length $\Omega\left( nd^{-32} \right)$ using different methods.

The paper is structured as follows. In Section \ref{s:prelim} we collect some lemmas which will be useful in the rest of the paper. Then in Section \ref{s:expansion} we prove our main results (Theorems \ref{t:expansionsimple1} and \ref{t:expansionsimple2}) on the likely expansion of the giant component of $Q^d_p$. In Section \ref{s:consequences} we use our main result to prove Theorems \ref{t:mixingtime}--\ref{t:minors}, and then finally in Section \ref{s:discussion} we mention some open problems and avenues for further research.

\section{Preliminaries}\label{s:prelim}
For real numbers $x,y,z$ we will write $x=y\pm z$ to mean that $y-z \leq x \leq y+z$.

If $G$ is a graph and $X$ is either a subgraph of $G$, or a subset of $V(G)$, then we will write $G-X$ for the induced subgraph of $G$ on $V(G) \setminus V(X)$ or $V(G) \setminus X$, respectively. Given a $k \in \mathbb{N}$ and a subset $S \subseteq V(G)$, we will write $N^k_G(S)$ for the \emph{$k$th external neighbourhood} of $S$ in $G$, that is, the set of vertices in $S^c:=V(G) \setminus S$ which are at distance at most $k$ from $S$ in $G$. When $k=1$, or when the graph is clear from context, we will omit the subscript or superscript, respectively. Given (not necessarily disjoint) subsets $A,B \subseteq V(G)$ we will write $e_G(A,B)$ for the number of edges in $G$ with one endpoint in $A$ and the other in $B$ and we write $e_G(S) = e_G(S,S)$. We say a subset $S \subseteq V(G)$ is \emph{connected (in $G$)} if $G[S]$ is connected.

Throughout the paper, unless the base is explicitly mentioned, all logarithms will be the natural logarithm. We will also omit floor and ceiling signs for ease of presentation.

We will want to use a more explicit form of Theorem \ref{t:AKSsimple}. It is stated in \cite{AKS81} that a careful treatment of their proof gives the following result, which also appears explicitly in the work of Bollob\'{a}s, Kohayakawa and {\L}uczak \cite[Theorem 32]{BKL94}.

\begin{theorem}[\cite{AKS81}]\label{t:AKSfine}
Let $0 < c \ll \delta$, let $p=\frac{1+\delta}{d}$ and let $\gamma:= \gamma(\delta)$ be the survival probability of the ${\text{Po}(1+\delta)}$ branching process. Then whp there is a unique component $L_1$ of order at least $c n$ in $Q^d_p$ and $|V(L_1)| = (\gamma \pm c) n$.
\end{theorem}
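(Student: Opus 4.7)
The plan is to combine the AKS existence theorem (Theorem \ref{t:AKSsimple}) with a branching-process comparison and a concentration computation to pin down the giant's asymptotic size as $\gamma n$. The key observation is that the first $K$ layers of a BFS exploration in $Q^d_p$ are nearly tree-like and can be coupled with a Galton--Watson branching process whose offspring distribution is asymptotically $\text{Po}(1+\delta)$.

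First, fix a large constant $K = K(c)$ such that the $\text{Po}(1+\delta)$ branching process survives past generation $K$ with probability $\gamma_K \in [\gamma, \gamma + c/4]$ (such $K$ exists since $\gamma_K \downarrow \gamma$). For each vertex $v \in V(Q^d)$, let $S(v)$ be the event that the BFS exploration of $C(v)$ in $Q^d_p$ reaches depth $K$. Since the ball of radius $K$ around $v$ in $Q^d$ has size $O(d^K)$ and at each step the number of available new neighbours is $d - O(d^{K-1})$, each present independently with probability $p$, a standard Poisson approximation gives $\Pr(S(v)) = \gamma_K + o(1) = \gamma \pm c/3$ for $d$ large. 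Setting $Y := |\{v : S(v)\}|$, linearity of expectation yields $\mathbb{E}[Y] = (\gamma \pm c/3)\,n$.

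For concentration of $Y$, we would apply an Azuma--Hoeffding edge-exposure martingale: toggling a single edge of $Q^d$ affects $S(v)$ only for those $v$ whose $K$-ball contains that edge, of which there are $O(d^{K-1})$. With $|E(Q^d)| = nd/2$ edges, Azuma gives $|Y - \mathbb{E}[Y]| = O\bigl(\sqrt{n\,d^{2K-1}}\bigr) = o(n)$ whp, since $K$ is a fixed constant and $d^{2K-1} = o(2^d)$. Hence $Y = (\gamma \pm c/2)\,n$ whp.

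Finally, one needs to identify $Y$ with $|V(L_1)|$ up to an $o(n)$ error. The inclusion $V(L_1) \subseteq \{v : S(v)\}$ is immediate, since by Theorem \ref{t:AKSsimple}, whp $|V(L_1)| \geq cn$, and therefore every $v \in V(L_1)$ has $|C(v)| \geq cn \gg d^K$, forcing $S(v)$. For the reverse inclusion (up to $o(n)$ exceptions), we invoke a standard AKS-style sprinkling argument: write $p = p_1 + p_2$ with $p_1 = (1+\delta/2)/d$, show that any vertex with surviving $K$-BFS under $p_1$ lies in a component of size at least some slowly-growing function of $d$ with correspondingly large edge-boundary, and then use the sprinkling edges of probability $p_2$ to merge all such components into the unique giant. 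This absorbs all but $o(n)$ of the $S(v)$-vertices into $L_1$, yielding $|V(L_1)| = Y \pm o(n) = (\gamma \pm c)\,n$ whp. The main obstacle lies in quantifying this last sprinkling step: one must control the number of vertices lying in ``intermediate''-sized components (between $\omega(1)$ and $cn$) beyond what is required to prove Theorem \ref{t:AKSsimple}, and it is precisely this quantitative refinement that the authors of \cite{AKS81} refer to as a ``careful treatment'' of their original argument.
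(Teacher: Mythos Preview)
The paper does not prove this theorem at all; it is quoted as a known result from \cite{AKS81}, with a pointer to an explicit statement in \cite[Theorem 32]{BKL94}. So there is no ``paper's own proof'' to compare against.

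That said, your sketch follows the standard template for this kind of result and is broadly reasonable. A couple of small comments. First, the Lipschitz constant in your Azuma step should be $O(d^K)$ rather than $O(d^{K-1})$: the number of vertices $v$ whose $K$-ball in $Q^d$ contains a given edge is of order $\sum_{i \le K}\binom{d}{i} = O(d^K)$. This does not affect the conclusion, since $nd \cdot d^{2K} = o(n^2)$ for fixed $K$. Second, and more substantively, you correctly locate the real work in the reverse inclusion $\{v : S(v)\} \subseteq V(L_1)$ up to $o(n)$ errors, but your handling of it is a bit loose: a vertex whose $K$-BFS survives under $p_1$ need not sit in a component of any particular size, so the sprinkling step does not directly apply to it. One really needs to show that whp all but $o(n)$ vertices either lie in components of order $O(d)$ (where the branching-process coupling says the BFS typically dies before depth $K$) or in the giant; this ``no middle ground'' statement is exactly what you flag as the main obstacle, and it is indeed the content of Lemma \ref{l:secondlargest} together with a first-moment bound on vertices in components of bounded size. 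Your sketch would be a fair summary of the strategy, provided you make explicit that this intermediate-component control is being assumed rather than derived.
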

	
The following simple lemma, which is a slight adaptation of a result in \cite{KN06}, allows us to decompose a tree into roughly equal sized parts.
	\begin{lemma}\label{l:treedecomp}
	Let $T$ be a tree such that $\Delta(T) \leq C_1$, all but $r$ vertices of $T$ have degree at most $C_2\leq C_1$ and $|V(T)| \geq \ell$, for some $C_1,C_2,\ell,r >0$. Then there exist disjoint vertex sets $A_1,\ldots, A_s \subseteq V(T)$ such that
		\begin{itemize}
			\item $V(T) = \bigcup_{i=1}^s A_i$; 
			\item $T[A_i]$ is connected for each $1 \le i \le s$;
			\item $T[A_i]$ has diameter at most $2\ell$;
			\item $\ell \leq |A_i| \leq  C_1 \ell$ for each $1 \le i \le r$; and
			\item $\ell \leq |A_i| \leq  C_2 \ell$ for each $r < i \le s$.
		\end{itemize}
	\end{lemma}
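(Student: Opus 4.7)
The plan is to carry out a standard bottom-up greedy decomposition of $T$, in the spirit of the result of~\cite{KN06} referred to in the statement. First, I root $T$ at an arbitrary leaf $v_0$ and process the vertices in post-order, so that every vertex is handled only after all of its descendants. At each vertex $v$ I maintain the set $T(v)$ of currently \emph{unassigned} vertices in the subtree rooted at $v$; at the moment $v$ is processed, $T(v) = \{v\} \cup \bigcup_u T(u)$, where the union is over the children $u$ of $v$ that did not themselves spawn a block. If $|T(v)| \geq \ell$, I declare $T(v)$ a new block $A_i$ and mark its vertices as assigned; otherwise, $v$ passes $T(v)$ up to its parent.

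The blocks produced this way satisfy $|A_i| \geq \ell$ by construction, are connected, and partition $V(T)$ up to the leftover at $v_0$ discussed below. For the upper bound on sizes, observe that if a block $A_i = T(v)$ is declared at $v$, then each child $u$ of $v$ contributing to $T(v)$ must have $|T(u)| \leq \ell - 1$, for otherwise $u$ would itself have declared a block and emptied $T(u)$; hence
\[
|A_i| \;\leq\; 1 + \bigl(\deg(v) - 1\bigr)(\ell - 1),
\]
which is at most $C_2 \ell$ when $\deg(v) \leq C_2$ and at most $C_1 \ell$ in general. Since at most $r$ vertices of $T$ have degree exceeding $C_2$ and each such vertex lies in at most one block, at most $r$ blocks fall into the larger category. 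The diameter bound is also immediate: any two vertices of $A_i = T(v)$ lie in subtrees $T(u)$ of $v$ (or equal $v$ itself), each containing fewer than $\ell$ vertices, so the path between them through $v$ has length at most $2(\ell-1) \leq 2\ell$.

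The one delicate point, and the main obstacle I foresee, is the leftover at the root. If $|T(v_0)| \geq \ell$ when $v_0$ is reached, it becomes the final block; otherwise some vertices remain unassigned. Choosing $v_0$ as a leaf pays off here: in that case $T(v_0) = \{v_0\}$ and $v_0$'s unique child $u_1$ must already have spawned some block $B$ (else the entire tree would be unassigned, contradicting $|V(T)| \geq \ell$). I then merge $\{v_0\}$ into $B$. A short arithmetic check of the formula $|B| \leq 1 + (\deg(u_1) - 1)(\ell - 1)$ shows that, for $\ell \geq 2$, it leaves enough slack for the extra vertex without violating either the $C_1 \ell$ or $C_2 \ell$ bound, and the diameter grows by at most one and stays below $2\ell$. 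Beyond this careful choice of root, the argument is entirely routine.
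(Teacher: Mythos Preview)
Your greedy post-order decomposition is essentially the same procedure as the paper's (which repeatedly peels off the subtree rooted at a deepest vertex whose pending subtree has size $\geq \ell$), and the size and diameter bounds for the non-final blocks are argued correctly. The gap is in your treatment of the leftover at the root.

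The assertion ``in that case $T(v_0)=\{v_0\}$'' is false. Rooting at a leaf only guarantees that $v_0$ has a unique child $u_1$; it does not force $u_1$ to spawn a block. If $|T(u_1)|<\ell$ then $u_1$ passes $T(u_1)$ up, and $T(v_0)=\{v_0\}\cup T(u_1)$ can have any size up to $\ell-1$. For a concrete instance, take $T$ a path on $\lfloor 3\ell/2\rfloor$ vertices: the only block is the far segment of length $\ell$, and the leftover is a segment of length $\lfloor \ell/2\rfloor$, not a single vertex. Your parenthetical justification (``else the entire tree would be unassigned'') also fails: a block may well have been declared at some proper descendant of $u_1$ while $u_1$ itself still carries a nonempty $T(u_1)$ of size $<\ell$. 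Consequently there need be no block $B$ rooted at $u_1$ to merge $\{v_0\}$ into, and even if there were, you would still have the remaining vertices of $T(v_0)\setminus\{v_0\}$ unaccounted for.

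The repair is straightforward and is exactly what the paper does: merge the whole leftover $T(v_0)$ (a connected set of size $<\ell$) into the block whose root $v^\ast$ is adjacent to $T(v_0)$; such a $v^\ast$ exists because $|V(T)|\geq \ell > |T(v_0)|$. The merged piece is connected, its size is at most $(\deg(v^\ast)-1)(\ell-1)+\ell \leq \deg(v^\ast)\,\ell$, and every vertex in it is within distance $\ell$ of $v^\ast$, so the diameter bound $2\ell$ survives. With this correction (and after reordering the blocks so that those rooted at high-degree vertices come first), your argument goes through.
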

	\begin{proof}
We choose an arbitrary root $w$ for $T$. For a vertex $v$ in a rooted tree $S$, let us write $S_v$ for the subtree of $S$ rooted at $v$. 

We construct the vertex sets $A_i$ inductively. Let us start by setting $T(0)=T$. Given a tree $T(i)$ rooted at $w$ such that $|V\left(T(i)\right)| \geq \ell$, let $v_i$ be a vertex of maximal distance from $w$ such that $\left|V\left(T(i)_{v_i}\right)\right| \geq \ell$. We take $A_{i+1} = V\left(T(i)_{v_i}\right)$ and let $T(i+1) = T(i) - T(i)_{v_i}$. We stop when $|V\left(T(i)\right)| < \ell$, and in that case we add $V\left(T(i)\right)$ to the final $A_i$. Finally, let us re-order the sets $A_i$ so that they are non-increasing in size.

We claim that the sets $A_1, A_2, \ldots, A_s$ satisfy the conclusion of the lemma. Indeed, the first two properties are clear by construction. Note that each $A_i$ is the union of the vertices of $T(i)_x$ over all children $x$ of $v_i$, together with a connected set of size at most $\ell$ which contains $v_i$, and by our choice of $v_i$, $|T(i)_x| < \ell$ for every child $x$ of $v_i$. 

In particular, it follows that every vertex in $A_i$ is at distance at most $\ell$ from $v_i$, and so $T[A_i]$ has diameter at most $2\ell$, and the third property holds. Furthermore, if $v_i \neq w$, then $v_i$ has $d(v_i)-1$ children and so, it follows that $|A_{i+1}| \leq (d(v_i)-1)(\ell-1) +\ell \leq d(v_i) \ell$. Similarly, if ${v_i=w}$, then we note that $A_{i+1}=V(T(i))$ and so $|A_{i+1}| \leq d(w)(\ell-1) + 1 \leq d(w) \ell$. Therefore, since all but $r$ vertices of $T$ have degree at most $C_2$, the fourth and fifth properties also hold.
	\end{proof}
	
We will also need the following bound on the number of subtrees of a graph.

\begin{lemma}[{\cite[Lemma 2]{BFM98}}]\label{l:treecount}
Let $G$ be a graph with maximum degree $\Delta$, let $v \in V(G)$ and let $t(v,k)$ be the number of rooted trees in $G$ which have root $v$ and $k$ vertices. Then
\[
t(v,k) \leq \frac{k^{k-2}\Delta^{k-1}}{(k-1)!} \leq (e\Delta)^{k-1}.
\]
\end{lemma}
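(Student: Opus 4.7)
The plan is to prove the first inequality by a double count. I would consider pairs $(T,\phi)$, where $T$ is a rooted tree on the vertex set $\{1,\ldots,k\}$ with root $1$, and $\phi:\{1,\ldots,k\}\to V(G)$ is an injection with $\phi(1)=v$ that sends every edge of $T$ to an edge of $G$. From above, I would bound the number of such pairs as follows: by Cayley's formula there are $k^{k-2}$ trees on $\{1,\ldots,k\}$, each of which we may view as rooted at $1$; and given $T$, I would build $\phi$ by placing the non-root vertices in BFS order from the root, observing that at each step the new vertex must be mapped to a neighbour in $G$ of the image of its parent, and so has at most $\Delta$ possible images. Since there are $k-1$ non-root vertices, this gives at most $k^{k-2}\Delta^{k-1}$ pairs in total.

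Next, I would argue that each rooted subtree $S$ of $G$ of order $k$ with root $v$ is accounted for by exactly $(k-1)!$ pairs. Indeed, once we fix a bijection $\phi:\{1,\ldots,k\}\to V(S)$ with $\phi(1)=v$, the tree $T$ is forced to be the preimage $\phi^{-1}(S)$ regarded as a graph on $\{1,\ldots,k\}$, because $S$ already has exactly $k-1$ edges; and there are $(k-1)!$ such bijections, coming from relabelling the non-root vertices of $S$ by $2,\ldots,k$. Dividing then yields the first inequality $t(v,k)\leq \frac{k^{k-2}\Delta^{k-1}}{(k-1)!}$.

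For the second inequality I would rewrite
\[
\frac{k^{k-2}}{(k-1)!}=\frac{k^{k-1}}{k!}
\]
and use the standard bound $\frac{k^k}{k!}\leq e^k$, which is immediate from the power-series expansion of $e^k$. This yields $\frac{k^{k-2}}{(k-1)!}\leq \frac{e^k}{k}\leq e^{k-1}$ for $k\geq 3$, and the cases $k=1,2$ are checked directly. There is no real obstacle; the only subtle point is to see that the multiplicity $(k-1)!$ in the double count is exact rather than merely an upper bound, which uses the fact that $S$, being a tree on $k$ vertices, has only one spanning tree (itself), so that $T$ is determined by $\phi$.
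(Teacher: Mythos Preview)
Your proof is correct. The paper does not prove this lemma; it is quoted from \cite{BFM98} without proof, and your double-counting argument via Cayley's formula is essentially the standard one. One minor quibble of phrasing: for the direction of the inequality you want, what matters is that each rooted subtree $S$ arises from \emph{at least} $(k-1)!$ pairs $(T,\phi)$, so the ``subtle point'' is that the multiplicity is a lower bound, not an upper bound; your argument in fact establishes exactness, which is more than enough.
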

	
The following theorem allows us to deduce the existence of a long cycle from vertex-expansion properties of a graph. For wider context on properties of expanding graphs, see the survey of Krivelevich \cite{K19}.

\begin{theorem}[{\cite[Theorem 1]{K19a}}]\label{t:cycleexpander}
Let $k \geq 1, t \geq 2$ be integers. Let $G$ be a graph on more than $k$ vertices satisfying
\[
\left|N(W)\right| \geq t, \qquad \text{for every } W \subseteq V(G) \text{ with } \frac{k}{2} \leq |W| \leq k.
\]
Then $G$ contains a cycle of length at least $t+1$.
\end{theorem}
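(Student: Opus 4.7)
My approach would be based on the P\'osa rotation-extension technique. Suppose for contradiction that every cycle in $G$ has length at most $t$. First observe that no connected component of $G$ can have size in $[k/2, k]$, since its external neighborhood would be empty, contradicting $|N(W)| \geq t \geq 2$; as $|V(G)| > k$, there must exist a component $C$ of size greater than $k$. Let $P = v_0 v_1 \ldots v_\ell$ be a longest path in $C$. By the maximality of $P$, all neighbors of $v_\ell$ lie on $P$, and the edge $v_\ell v_i$ closes a cycle of length $\ell - i + 1$; the cycle-length assumption thus forces $i \geq \ell - t + 1$ for every such neighbor, so all neighbors of $v_\ell$ cluster in the last $t - 1$ positions before $v_\ell$.

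Next, iterate P\'osa rotations at the $v_\ell$-end while keeping $v_0$ fixed: whenever $v_i$ is a neighbor of $v_\ell$ with $i < \ell - 1$, the path $v_0 \ldots v_i v_\ell v_{\ell-1} \ldots v_{i+1}$ is again a longest path, with new endpoint $v_{i+1}$. Applying this repeatedly over every newly generated endpoint produces a set $U \subseteq V(P)$ of all endpoints of longest paths reachable from $P$ by a sequence of rotations. Since every rotated path $P_u$ is again a longest path, all $G$-neighbors of any $u \in U$ lie on $V(P)$; moreover the closure property of rotations reads: for every $u \in U$ and every neighbor $w$ of $u$ on $P_u$, the successor of $w$ on $P_u$ also belongs to $U$. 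Summing these observations one obtains the standard P\'osa-type bound $|U \cup N_G(U)| \leq 3|U|$, and the bounded-cycle assumption restricts every neighbor of $u$ on $P_u$ to the last $t$ positions.

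The conclusion then follows by a case split on $|U|$. If $|U| \geq k/2$, pick $W \subseteq U$ with $k/2 \leq |W| \leq k$; the expansion hypothesis gives $|N_G(W)| \geq t$. The rotation machinery associates to each pair $(u, w)$ with $u \in W$ and $w \in N_G(u) \cap V(P)$ a cycle of length at most $t$ lying in a small segment of $P_u$. A careful positional argument, exploiting that all the rotated paths $P_u$ rearrange the same vertex set $V(P)$, shows that the $t$ distinct vertices of $N_G(W)$ cannot all lie in the short-cycle zones of every rotated path simultaneously, so some edge $uw$ with $u \in W$ closes a cycle of length at least $t + 1$. In the complementary case $|U| < k/2$, the P\'osa bound yields $|U \cup N_G(U)| < 3k/2$; using $|V(G)| > k$ one enlarges $U$ to a set $W$ with $k/2 \leq |W| \leq k$ and $W \supseteq U$, and the hypothesis $|N_G(W)| \geq t$ then exhibits a boundary vertex off $V(P)$, contradicting the maximality of $P$ via an extension step.

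The main obstacle is the positional step in the large-$U$ case: converting the $t$ boundary vertices guaranteed by expansion into a single cycle of length at least $t + 1$, rather than into $t$ short cycles. This requires carefully tracking where on each rotated path $P_u$ the neighbors of $u$ can lie, and it is precisely the flexibility of the rotation-extension bookkeeping --- the fact that $U$ is closed under taking successors of neighbors on rotated paths --- that forces at least one of the many available cycles to be long. This positional argument is the technical heart of the proof.
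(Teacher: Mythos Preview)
This theorem is quoted from an external source \cite{K19a} and is not proved in the present paper; it is invoked only as a black box in the proof of Theorem~\ref{t:cycles}. So there is no in-paper argument to compare against.

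Evaluating your sketch on its own terms, both branches of the case split have real gaps. In the small-$U$ case you enlarge $U$ to some $W$ with $k/2 \le |W| \le k$ and then assert that $|N_G(W)| \ge t$ ``exhibits a boundary vertex off $V(P)$,'' yielding an extension. But nothing forces $N_G(W)$ to leave $V(P)$: if $|V(P)|$ is large compared with $|W|+t$, all of $N_G(W)$ can lie on $P$. And even if some $x \in N_G(W)\setminus V(P)$ exists, you only know that $x$ is adjacent to some $w \in W$; if $w \in W\setminus U$ then $w$ is not the endpoint of any rotated longest path and no extension step is available. In the large-$U$ case you yourself flag the ``positional argument'' as the main obstacle and do not carry it out. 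The difficulty is genuine: for each $u \in U$ the short-cycle constraint confines the neighbours of $u$ to the last $t-1$ positions of \emph{that particular} rotated path $P_u$, but the orderings $P_u$ are different permutations of $V(P)$, and there is no evident mechanism forcing the $t$ external neighbours of a chosen $W \subseteq U$ to assemble into one long cycle rather than many short ones. The standard P\'osa bound $|N(U)\setminus U| \le 2|U|$ only yields $|U| \ge t/2$ here, which is not what is needed.

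For what it is worth, the proof in \cite{K19a} proceeds via depth-first search rather than P\'osa rotation, exploiting the absence of edges between the fully-explored and unexplored sets together with the back-edge structure of the DFS tree; this bookkeeping sidesteps exactly the positional difficulty that stalls the rotation approach.
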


Furthermore, the next theorem allows us to deduce the existence of a large complete minor in a graph without any small separators. It is easy to see that graphs with good vertex-expansion do not contain any small separators, and in fact it is known (see \cite[Section 5]{K19}) that the converse is true, in the sense that graphs without small separators must contain large induced subgraphs with good vertex-expansion.

\begin{theorem}[{\cite[Theorem 1.2]{KR10}}]\label{t:minorexpander}
Let $G$ be a graph with $N$ vertices and with no $K_t$-minor. Then $V(G)$ contains a subset $S$ of size $O\left(t \sqrt{N}\right)$ such that each connected component of $G - S$ has at most $\frac{2}{3}N$ vertices.
\end{theorem}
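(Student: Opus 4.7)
The plan is to obtain the separator from a treewidth bound on $G$. The target intermediate result is that every $K_t$-minor-free graph on $N$ vertices has treewidth $O(t\sqrt{N})$; once this is in hand, a standard centroid-of-tree-decomposition argument converts a width-$k$ decomposition into a balanced vertex separator of size at most $k+1$, which gives the conclusion directly.

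For the easy direction, given a tree decomposition $(T,(X_s)_{s\in V(T)})$ with $\max_s|X_s| = O(t\sqrt{N})$, I would root $T$ arbitrarily and, for each node $s$, set $f(s)$ equal to the number of vertices of $G$ first introduced at a descendant of $s$. Choosing a node $s^*$ minimal in the rooted order with $f(s^*)\geq N/3$, standard properties of tree decompositions imply that each component of $G-X_{s^*}$ is contained in the vertices introduced in a single child subtree of $s^*$ (or in the ancestors of $s^*$), and in either case has at most $2N/3$ vertices, so $X_{s^*}$ is the desired balanced separator.

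The main ingredient, and the real content, is the treewidth bound itself, due to Alon, Seymour and Thomas with the sharp linear-in-$t$ dependence supplied by Kawarabayashi and Reed. The approach is to run a BFS from an arbitrary vertex, partition $V(G)$ into distance-layers $L_0,L_1,\ldots$, and seek a balanced separator as a union of a few layers. If all but few layers have size $O(t\sqrt{N})$ this succeeds directly. Otherwise, many layers must be \emph{fat}, and from these one attempts to build $t$ pairwise adjacent connected branch sets, which would form a $K_t$-minor, contradicting the hypothesis. The principal obstacle is ensuring that the $t$ candidate branch sets are pairwise \emph{adjacent} rather than merely pairwise disjoint and connected: a naive construction in which each branch set lives in its own disjoint block of layers fails this requirement. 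The fix is to weave each branch set through many consecutive layers in a staircase pattern, reserving individual vertices of each fat layer as shared endpoints between distinct branch sets, and the $O(t\sqrt{N})$ bound falls out of balancing the number of bands against their width. Making this combinatorial bookkeeping precise, and in particular getting the factor of $t$ rather than $t^{3/2}$, is the technically hardest step.
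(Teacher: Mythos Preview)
The paper does not prove this statement at all: it is quoted verbatim from \cite{KR10} and used as a black box in the derivation of Theorem~\ref{t:minors}. So there is nothing in the paper to compare your argument against, and for the purposes of this paper no proof is required.

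That said, as a standalone proof your proposal has a genuine gap. The reduction from a treewidth bound to a balanced separator is fine and standard. The issue is that you do not actually prove the treewidth bound; you explicitly defer ``the technically hardest step'' --- getting $O(t\sqrt{N})$ rather than $O(t^{3/2}\sqrt{N})$ --- without indicating how to carry it out. Moreover, the sketch you give for the easier Alon--Seymour--Thomas bound is not right as stated: the BFS-layer heuristic you describe (many fat layers, then weave $t$ branch sets through them in a ``staircase pattern'') is closer in spirit to the Lipton--Tarjan planar separator argument and does not by itself produce a $K_t$-minor. In a BFS layering, only \emph{consecutive} layers are guaranteed to be adjacent, so ``reserving individual vertices of each fat layer as shared endpoints between distinct branch sets'' does not make the branch sets pairwise adjacent. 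The actual Alon--Seymour--Thomas proof proceeds quite differently, via an iterative procedure that maintains a system of disjoint paths together with a carefully chosen separator, and the Kawarabayashi--Reed improvement to linear dependence in $t$ uses yet another, substantially more involved, approach. If you want to include a proof rather than a citation, you would need to supply one of these arguments in full; the present sketch does not.
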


Given a discrete random variable $X$ taking values in $\mathcal{X}$ the \emph{entropy} of $X$ is given by
\[
H(X) := \sum_{x \in \mathcal{X}} - p(x) \log_2 (p(x)),
\]
where $p(x) = \mathbb{P}(X=x)$. We will need only two basic facts about the entropy function.
\begin{lemma}\label{l:entropy}
\leavevmode
\begin{enumerate}[(i)]
\item\label{i:unifent} $H(X) \leq \log_2 (|\mathcal{X}|)$ with equality iff $X$ is uniformly distributed,
\item\label{i:jointent} $H(X_1,X_2, \ldots, X_d) \leq \sum_{i=1}^d H(X_i)$,
\end{enumerate}
where the \emph{joint entropy} $H(X_1,X_2, \ldots, X_d)$ is the entropy of the random vector $(X_1,X_2, \ldots, X_d)$. 
\end{lemma}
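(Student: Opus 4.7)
The plan is to derive both inequalities from a single general fact, Gibbs' inequality: for any two probability distributions $p, q$ on the same finite set $\mathcal{X}$ one has $-\sum_x p(x) \log_2 p(x) \leq -\sum_x p(x) \log_2 q(x)$, with equality if and only if $p = q$. I would establish this by rearranging to $\sum_x p(x) \log_2(q(x)/p(x))$ and applying Jensen's inequality to the strictly concave function $\log_2$:
\[
\sum_x p(x) \log_2 \frac{q(x)}{p(x)} \leq \log_2 \sum_x p(x) \cdot \frac{q(x)}{p(x)} = \log_2 \sum_x q(x) = 0,
\]
where equality holds iff $q(x)/p(x)$ is constant on the support of $p$, which forces $p = q$ since both are probability distributions.

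For part (i), I would take $q$ to be the uniform distribution on $\mathcal{X}$, so that $-\sum_x p(x) \log_2 q(x) = \log_2 |\mathcal{X}|$. Gibbs' inequality then gives $H(X) \leq \log_2|\mathcal{X}|$ immediately, with equality exactly when $p$ is uniform.

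For part (ii), let $p$ denote the joint distribution of $(X_1, \ldots, X_d)$ and let $p_i$ be the marginal distribution of $X_i$. I would take $q(x_1, \ldots, x_d) := \prod_{i=1}^d p_i(x_i)$, which is itself a probability distribution on the product space. Using the fact that summing $p(x_1, \ldots, x_d)$ over all coordinates except $x_i$ yields $p_i(x_i)$, one computes
\[
-\sum_{x_1, \ldots, x_d} p(x_1, \ldots, x_d) \log_2 q(x_1, \ldots, x_d) = \sum_{i=1}^d \left( -\sum_{x_i} p_i(x_i) \log_2 p_i(x_i) \right) = \sum_{i=1}^d H(X_i),
\]
and Gibbs' inequality delivers the desired bound.

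There is no real obstacle here: the lemma records a pair of standard textbook facts about Shannon entropy, and both parts reduce cleanly to a single application of Jensen's inequality once Gibbs' inequality has been set up. The only minor bookkeeping point is the convention $0 \log_2 0 := 0$ (consistent with $\lim_{t \to 0^+} t \log_2 t = 0$), so that atoms of zero probability contribute nothing and the sums above are well-defined without any case analysis.
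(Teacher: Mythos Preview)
Your proof is correct and is the standard textbook derivation: both parts are instances of the non-negativity of relative entropy (Gibbs' inequality), itself an immediate consequence of Jensen applied to the concave logarithm. The paper does not actually prove this lemma at all; it simply records the two facts and defers to \cite[Chapter 15]{AS16} for a proof, so there is nothing to compare against beyond noting that your argument is exactly the one a reader would find in that reference.
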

For proofs of these facts, and more background on discrete entropy, see, e.g., \cite[Chapter 15]{AS16}.

We will use the following Chernoff type bounds on the tail probabilities of the binomial distribution, see, e.g., \cite[Appendix A]{AS16}.
\begin{lemma}\label{l:Chernoff}
Let $N \in \mathbb{N}$, let $p \in [0,1]$ and let $X \sim \text{Bin}(N,p)$.

\begin{enumerate}[(i)]
\item\label{i:chernoff1} For every positive $a$ with $a \leq \frac{Np}{2}$,
\[
\mathbb{P}\left(\left|X -Np \right| > a\right) < 2 \exp\left(-\frac{a^2}{4Np} \right).
\]
\item\label{i:chernoff2} For every positive $b$,
\[
\mathbb{P}\left(X > bNp \right) \leq \left(\frac{e}{b}\right)^{b Np}.
\]
\end{enumerate}
\end{lemma}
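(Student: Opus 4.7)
The plan is to prove both bounds via the standard Chernoff--Cramér method: apply Markov's inequality to $e^{tX}$ and then optimize in the free parameter $t > 0$. The starting point is the moment generating function $\mathbb{E}[e^{tX}] = (1 - p + p e^t)^N$, together with the elementary estimate $1 - p + p e^t \leq \exp(p(e^t - 1))$. For any $t > 0$ and $c > 0$ these combine to give the master bound
\[
\mathbb{P}(X \geq c) \leq \exp\bigl(Np(e^t - 1) - tc\bigr),
\]
and the analogous estimate on the lower tail follows by applying the same argument to $-X$.

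For part (ii) I would take $c = bNp$ and choose $t = \log b$; we may assume $b > 1$, since for $b \leq 1$ the bound $(e/b)^{bNp} \geq 1$ is trivial. Substituting yields
\[
\mathbb{P}(X > bNp) \leq \exp\bigl(Np(b - 1 - b \log b)\bigr) = \left(\frac{e}{b}\right)^{bNp} e^{-Np} \leq \left(\frac{e}{b}\right)^{bNp},
\]
which is exactly the desired inequality.

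For part (i), I would take $c = Np + a$ and use the optimal choice $t = \log(1 + a/(Np))$ in the master bound; this produces an exponent of $-Np \cdot \psi(a/(Np))$, where $\psi(x) := (1+x)\log(1+x) - x$. Running the same argument on the lower tail produces a similar exponent, and the factor of $2$ in front of the bound comes from a union bound combining the two one-sided estimates. The crux is then a calculus inequality of the form $\psi(x) \geq x^2/4$ valid on $[0,1/2]$, which explains both the hypothesis $a \leq Np/2$ and the constant $4$ in the denominator of the exponent; this follows quickly from the Taylor expansion $\psi(x) = x^2/2 - x^3/6 + O(x^4)$ around the origin.

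The only real obstacle is pinning down the calculus estimate for $\psi$ on the correct range, since different formulations of the Chernoff bound distribute the resulting constants differently between hypothesis and conclusion; beyond that the proof is a direct moment generating function computation and is precisely the content of \cite[Appendix A]{AS16}.
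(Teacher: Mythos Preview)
Your proposal is correct and follows the standard Chernoff--Cram\'er argument; the paper does not supply its own proof of this lemma but simply cites \cite[Appendix A]{AS16}, which is exactly the computation you sketch. One small remark: your claimed inequality $\psi(x) \geq x^2/4$ on $[0,1/2]$ is fine (indeed $\psi''(x)=1/(1+x)\geq 2/3$ there, giving $\psi(x)\geq x^2/3$), and for the lower tail the analogous function $\phi(y)=(1-y)\log(1-y)+y$ satisfies $\phi''(y)=1/(1-y)\geq 1$, so both one-sided bounds go through cleanly.
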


In particular, the following two simple consequences of Lemma \ref{l:Chernoff} in our setting will be useful. The first bounds the number of high degree vertices in $Q^d_p$ for small $p$.

\begin{lemma}\label{l:degrees}
Let $c > 0$ be a constant and let $p=\frac{c}{d}$. Then whp $Q^d_p$ contains at most $nd^{-4}$ vertices of degree at least $\log d$.
\end{lemma}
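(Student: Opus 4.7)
The plan is to use a first-moment argument. For any fixed vertex $v \in V(Q^d)$, the degree of $v$ in $Q^d_p$ is distributed as $\mathrm{Bin}(d,p) = \mathrm{Bin}(d,c/d)$, with mean exactly $c$. So the problem reduces to a tail estimate on a binomial with constant mean, tightened by a union bound over the $n$ vertices.

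First I would apply Lemma \ref{l:Chernoff}\eqref{i:chernoff2} to a single vertex with $Np = c$ and the parameter $b = (\log d)/c$, which gives
\[
\mathbb{P}\bigl(d_{Q^d_p}(v) \geq \log d\bigr) \;\leq\; \left(\frac{ec}{\log d}\right)^{\log d}.
\]
For $d$ sufficiently large (so that $\log d \geq e^2 c$, say), this upper bound is at most $(\log d)^{-\log d/2}$, which in turn is at most $d^{-\log\log d/2}$. In particular, for all $d$ large enough this probability is smaller than $d^{-5}$.

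Next I would let $X$ denote the number of vertices of $Q^d_p$ of degree at least $\log d$ and compute $\mathbb{E}[X] \leq n \cdot d^{-\log\log d/2}$ by linearity of expectation (no independence needed). Markov's inequality then gives
\[
\mathbb{P}\bigl(X \geq nd^{-4}\bigr) \;\leq\; \frac{\mathbb{E}[X]}{nd^{-4}} \;\leq\; d^{\,4 - (\log\log d)/2},
\]
which tends to $0$ as $d \to \infty$, completing the proof.

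There is essentially no obstacle here: the only thing to watch is that $c$ is a fixed constant while $\log d \to \infty$, so the base $ec/\log d$ in the Chernoff estimate is eventually much less than $1$, which is what makes the single-vertex probability decay faster than any polynomial in $d$. Everything beyond that is Markov plus linearity of expectation.
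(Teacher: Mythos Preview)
Your proof is correct and follows essentially the same approach as the paper: apply the Chernoff bound of Lemma~\ref{l:Chernoff}\ref{i:chernoff2} to a single vertex to obtain a probability of at most $d^{-(\log\log d)/2}$, then use linearity of expectation and Markov's inequality. The paper's argument is virtually identical, differing only in cosmetic details of the arithmetic.
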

\begin{proof}
For any fixed vertex $v \in V\left(Q^d\right)$, the degree of $v$ in $Q^d_{p}$ is distributed as Bin$(d,p)$, and so by Lemma \ref{l:Chernoff} \ref{i:chernoff2} we have that
\[
\mathbb{P}\left( d_{Q^d_p}(v) \geq \log d  \right) \leq \left(\frac{e(1+c)}{ \log d } \right)^{ \log d } \leq d^{-\frac{\log \log d}{2}}.
\]

It follows that the expected number of vertices in $Q^d_p$ with degree at least $\log d$ is at most $nd^{-\frac{\log \log d}{2}}$. Hence, by Markov's inequality, whp there at most $nd^{-4}$ vertices with degree at least $\log d$.
\end{proof}

We note that the above argument is suboptimal and with a little more care, the bound on the degree of the exceptional vertices could be improved from $\log d$ to $\frac{C \log d}{\log \log d}$ for some suitably large constant $C$. However, for ease of presentation we have not attempted to optimise any logarithmic factors in our proofs.

The second consequence of Lemma \ref{l:Chernoff} allows us to find large matchings in random subsets of edges in $Q^d_p$.

\begin{lemma}\label{l:matching}
Let $\delta > 0$ be a constant, let $p=\frac{\delta}{d}$ and let $F \subseteq E(Q^d)$ be such that $|F| \geq t$. Then there exists a constant $c >0$ such that $F_p$ contains a matching of size at least $c t d^{-1}$ with probability at least $1- \exp\left( -c t d^{-1}\right)$.
\end{lemma}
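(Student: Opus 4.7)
The plan is to use the probabilistic deletion method: sample $F_p$, regard each pair of edges in $F_p$ sharing a common endpoint as a conflict, and delete one edge from each conflict to extract a matching. For $\delta$ sufficiently small, the expected number of conflicts is only a constant fraction of the expected number of edges, so the resulting matching has size $\Omega(t/d)$ with the required probability.

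A preliminary monotonicity step shows we may assume $\delta$ is as small as we wish. Indeed, for any $\delta' \leq \delta$ the random sets $F_{p'}$ and $F_p$ (with $p' = \delta'/d$) can be coupled so that $F_{p'} \subseteq F_p$ via a common family of i.i.d.\ uniforms, and a matching in $F_{p'}$ remains a matching in $F_p$; so it suffices to prove the lemma for any fixed sufficiently small $\delta$, say $\delta \leq 1/8$. Now let $X = |F_p|$ and let $Y$ be the number of unordered pairs $\{e,f\}$ of distinct edges in $F_p$ sharing a vertex. A direct calculation using $\sum_{v \in V(Q^d)} \binom{d_F(v)}{2} \leq (d-1)|F|$ gives $\mathbb{E}[X] = |F|p \geq t\delta/d$ and $\mathbb{E}[Y] \leq (d-1)|F|p^2 \leq t\delta^2/d$. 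Deleting one edge from each conflicting pair in $F_p$ leaves an edge set which is a matching of size at least $X-Y$, whose expectation is therefore $\Omega(t/d)$ for $\delta \leq 1/8$.

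It remains to upgrade this expected-value statement to the high-probability bound, and this is the main obstacle. Concentration of $X$ is immediate: since $X \sim \mathrm{Bin}(|F|,p)$, Lemma~\ref{l:Chernoff}~\ref{i:chernoff1} gives $\mathbb{P}(X \leq \mathbb{E}[X]/2) \leq \exp(-\Omega(t/d))$. Concentration of $Y$ is more delicate: $Y$ is a sum of \emph{dependent} indicators (equivalently, a degree-two polynomial in the independent edge-indicators $\{\mathbb{1}[e \in F_p]\}_{e \in F}$), so Chernoff does not apply directly and Chebyshev only yields polynomial decay. The right tool is Janson's inequality in its upper-tail form, or a Kim--Vu polynomial concentration estimate; either gives $\mathbb{P}(Y \geq 2\mathbb{E}[Y]) \leq \exp(-\Omega(t/d))$. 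On the intersection of these two events $X - Y \geq \mathbb{E}[X]/2 - 2\mathbb{E}[Y] \geq t\delta(1/2 - 2\delta)/d = \Omega(t/d)$, which combined with a union bound gives the lemma.
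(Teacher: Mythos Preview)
Your monotonicity reduction to small $\delta$ is fine, and the lower bound $\nu(F_p) \geq X - Y$ for the matching number is valid. The gap is in the concentration step for $Y$: the claim $\mathbb{P}(Y \geq 2\mathbb{E}[Y]) \leq \exp(-\Omega(t/d))$ does not follow from Janson's upper-tail inequality or from Kim--Vu, and in fact it is false in general. The obstruction is that $Y = \sum_v \binom{D_v}{2}$ (with $D_v$ the degree of $v$ in $F_p$) is driven by single high-degree vertices. If $F$ contains one vertex $v$ with $d_F(v)=d$, then the event $\{D_v \geq k\}$ already forces $Y \geq \binom{k}{2}$, and this event has probability at least $p^k = \exp\bigl(-k\log(d/\delta)\bigr)$. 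Taking $k \asymp \delta\sqrt{t/d}$ makes $\binom{k}{2}$ exceed your threshold $2t\delta^2/d$, yet the probability is only $\exp\bigl(-\tilde{\Theta}(\sqrt{t/d})\bigr)$, which is much larger than $\exp(-\Omega(t/d))$. (Concretely: take $F$ to be one star of $d$ edges together with a disjoint matching of $t-d$ edges. Then $\mathbb{E}[Y]\asymp \delta^2$ is a constant, so $\mathbb{P}(Y \geq 2\mathbb{E}[Y]) = \mathbb{P}(Y\geq 1)\asymp \delta^2$ is also a constant.) Kim--Vu fares no better here because the second partial expectation $E_2$ equals $1$, so the bound it produces has exponent only $\tilde{\Theta}\bigl((t/d)^{1/4}\bigr)$.

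The deeper issue is that $X-Y$ is too crude a proxy for the matching number in this regime: a single vertex of degree $k$ in $F_p$ reduces the maximum matching by at most $k-1$, but it reduces $X-Y$ by $\binom{k}{2}-k$, which is quadratically worse. So even when $X-Y$ is negative, $F_p$ can have a large matching. The paper sidesteps this entirely with a direct first-moment argument: for each $\ell \leq ct/d$, any maximal matching of size $\ell$ forces its $\ell$ edges to be present and the at least $|F|-2\ell d$ edges of $F$ disjoint from it to be absent, and summing $\binom{|F|}{\ell} p^{\ell}(1-p)^{|F|-2\ell d}$ over $\ell \leq ct/d$ gives the bound. If you want to rescue the deletion approach, you would need a sharper combinatorial lower bound than $X-Y$ (for instance, $\sum_v \mathbb{1}[D_v\geq 1]$ counts matched-vertex candidates more faithfully) together with a concentration argument tailored to it; as written, the proof does not go through.
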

\begin{proof}
We note that we can assume that $|F|=t$. Let us consider the number of maximal matchings in $F_p$ of size $\ell$.

There are clearly at most $\binom{|F|}{\ell}$ potential maximal matchings of size $\ell$, and given a matching $M$ of size $\ell$ in $F$, in order for it to be a maximal matching in $F_p$ its edges have to appear in $F_p$, which happens with probability $p^{\ell}$, and also there can be no other edges in $F_p$ which are disjoint from $M$. Since there are at most $2 \ell d$ edges which share a vertex with edges in $M$, there is a set of $|F| - 2 \ell d$ edges which do not appear in $F_p$, which happens with probability at most $(1-p)^{|F| - 2 \ell d}$. Hence, by the union bound, the probability that $F_p$ contains a maximal matching of size $\ell$ is at most
\[
\binom{|F|}{\ell} \left(\frac{\delta}{d}\right)^{\ell} \left( 1- \frac{\delta}{d} \right)^{|F| - 2 \ell d}.
\]

In particular, as long as $c \ll \delta$, we can bound the probability $q$ that $F_p$ contains a maximal matching of size $\ell \leq c t d^{-1}$ from above by
\begin{align*}
q \leq \sum_{\ell=1}^{c t d^{-1}} \binom{t}{\ell} \left(\frac{\delta}{d}\right)^{\ell} \left( 1- \frac{\delta}{d} \right)^{t- 2 \ell d} &\leq \sum_{\ell=1}^{c t d^{-1}} \left(\frac{et}{\ell}\right)^{\ell}\left(\frac{\delta}{d}\right)^{\ell} \left( 1- \frac{\delta}{d} \right)^{\frac{t}{2}} \leq \exp \left(- \frac{\delta t}{2d}\right) \sum_{\ell=1}^{c t d^{-1}}\left(\frac{e\delta t}{\ell d}\right)^{\ell},
\end{align*}
since in this range of $\ell$ we have that $t - 2\ell d \geq \frac{t}{2}$. However, since $c \ll \delta$, it can be seen that the ratio of the consecutive terms $\left(\frac{e\delta t}{\ell d}\right)^{\ell}$ is at most $\frac{1}{2}$, and so the sum is dominated by the final term. Hence we can bound
\begin{align*}
q &\leq 2 \exp \left(- \frac{\delta t}{2d}\right) \left(\frac{e\delta}{c}\right)^{c t d^{-1}} \leq  2 \exp\left(\frac{c t}{d} \log\left( \frac{e \delta}{c}\right) - \frac{\delta t}{2d} \right) \leq 2 \exp\left( -ct d^{-1}\right).
\end{align*}
\end{proof}

We note that the conclusion of Lemma \ref{l:matching} is optimal up to a constant factor. Indeed, whp, for example by Lemma \ref{l:Chernoff}, there will only be $O\left(td^{-1}\right)$ edges in $F_p$.

We will use the following well-known result on edge-isoperimetry in the hypercube, originally due to Harper \cite{H64}, see also Lindsey \cite{L64}, Bernstein \cite{B67}, and Hart \cite{H76}.

\begin{theorem}[\cite{H64,L64,B67,H76}]\label{t:iso}
For any $A \subseteq V\left(Q^d\right)$ with $|A| \leq 2^{d-1}$,
\[
e(A,A^c) \geq |A|(d - \log_2 |A|).
\]
\end{theorem}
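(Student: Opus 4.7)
The plan is to use the entropy method. Let $X = (X_1, \ldots, X_d)$ be a uniformly random element of $A$, so that by Lemma \ref{l:entropy}(\ref{i:unifent}) we have $H(X) = \log_2 |A|$. For each $i \in [d]$, let $e_i(A)$ denote the number of edges of $Q^d$ with both endpoints in $A$ that differ only in coordinate $i$, and set $e(A) = \sum_{i=1}^{d} e_i(A)$. Since every vertex of $Q^d$ has degree $d$, a standard double-counting gives $e(A, A^c) = d|A| - 2e(A)$, so it suffices to prove the upper bound $e(A) \leq \tfrac{1}{2}|A|\log_2|A|$.

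To establish this upper bound, I would compute the conditional entropy $H(X_i \mid X_{-i})$, where $X_{-i}$ is $X$ with coordinate $i$ deleted. Given a value $x_{-i}$, the two-point fiber above $x_{-i}$ meets $A$ in $0$, $1$, or $2$ points; the first two cases contribute zero, while each of the $e_i(A)$ fibers meeting $A$ in both points has mass $2/|A|$ and conditional entropy $1$. Hence $H(X_i \mid X_{-i}) = 2e_i(A)/|A|$, and summing over $i$ gives $\sum_{i=1}^{d} H(X_i \mid X_{-i}) = 2e(A)/|A|$.

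The final step is Han's inequality, $\sum_{i=1}^{d} H(X_i \mid X_{-i}) \leq H(X)$, which combined with the computation above yields $2e(A)/|A| \leq \log_2|A|$, as required. The main (minor) obstacle is that Han's inequality is not among the facts listed in Lemma \ref{l:entropy}: I would derive it by writing $H(X) = \sum_{i=1}^{d} H(X_i \mid X_1, \ldots, X_{i-1})$ via the chain rule and observing that $H(X_i \mid X_{-i}) \leq H(X_i \mid X_1, \ldots, X_{i-1})$ since conditioning reduces entropy, which itself follows from the subadditivity encoded in Lemma \ref{l:entropy}(\ref{i:jointent}) applied to conditional distributions. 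As an entirely different route avoiding entropy, the same bound $e(A) \leq \tfrac{1}{2}|A|\log_2|A|$ can be obtained by induction on $d$ using the standard $i$-compression operation, which preserves $|A|$ and cannot increase the edge boundary, thereby reducing the problem to down-sets where the inequality is easily verified directly.
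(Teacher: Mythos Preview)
The paper does not prove this statement: it is quoted as a classical result due to Harper, Lindsey, Bernstein and Hart, with no argument given. So there is no ``paper's own proof'' to compare against.

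Your entropy argument is correct and is one of the standard modern proofs of this inequality. The identity $e(A,A^c) = d|A| - 2e(A)$ is right, your computation $H(X_i \mid X_{-i}) = 2e_i(A)/|A|$ is correct, and Han's inequality then gives exactly what is needed; note in particular that the hypothesis $|A|\le 2^{d-1}$ is not actually used, and indeed the bound $e(A)\le \tfrac12|A|\log_2|A|$ holds for all $A$. Your sketch of Han's inequality via the chain rule and the fact that conditioning reduces entropy is also fine, but be aware that Lemma~\ref{l:entropy} as stated in the paper records only uniform-maximality and subadditivity; the chain rule and the monotonicity of conditional entropy are not listed there, so you would need to state and justify them separately (they are, of course, elementary and well known).

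For context, the original proofs in the cited references proceed via compression and combinatorial shifting rather than entropy, closer to the alternative route you mention at the end. The compression approach has the advantage of identifying the extremal sets (initial segments of the binary order), information the entropy argument does not yield; on the other hand, for the weak form stated here the entropy proof is shorter and more transparent.
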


Finally we will use use the following lemma which bounds the likely number of edges spanned by connected subsets in $Q^d_p$.
\begin{lemma}\label{l:excess}
Let $\delta > 0$ be a constant and let $p=\frac{\delta}{d}$. Then there exists a constant $C:=C(\delta)$ such that whp every subset $S \subseteq V(Q^d)$ such that $|S| \geq d$ and $Q^d_p[S]$ is connected satisfies $e_{Q^d_p}(S) \leq C|S|$.
\end{lemma}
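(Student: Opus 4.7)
The plan is to use a union bound over rooted spanning trees of candidate sets $S$, combined with the Chernoff estimate from Lemma \ref{l:Chernoff}\ref{i:chernoff2} on the number of ``extra'' edges in $Q^d_p[S]$.

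First I would fix $k \geq d$ and bound the probability that there exists a connected vertex set $S$ in $Q^d_p$ of size $k$ with $e_{Q^d_p}(S) > Ck$, for a large constant $C = C(\delta)$ to be chosen. Any such $S$ admits a spanning tree $T \subseteq Q^d_p[S]$ on $k$ vertices, so by a union bound the event in question is contained in the union, over rooted trees $T$ of size $k$ in $Q^d$, of the event that $T \subseteq Q^d_p$ and that at least $(C-1)k$ additional edges of $Q^d[V(T)]$ also appear in $Q^d_p$. Lemma \ref{l:treecount} together with $\Delta(Q^d) = d$ bounds the number of rooted trees of size $k$ in $Q^d$ by $n(ed)^{k-1}$, and the probability that a given such tree lies in $Q^d_p$ is $p^{k-1} = (\delta/d)^{k-1}$.

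Conditional on $T \subseteq Q^d_p$, the number of additional edges present in $Q^d_p[V(T)]$ is stochastically dominated by $\text{Bin}(kd/2, p)$, since $e_{Q^d}(V(T)) \leq kd/2$ trivially. This has mean $\delta k/2$, so Lemma \ref{l:Chernoff}\ref{i:chernoff2} applied with $b = 2(C-1)/\delta$ gives
\[
\mathbb{P}\bigl(\text{Bin}(kd/2, p) \geq (C-1)k\bigr) \leq \left(\frac{e\delta}{2(C-1)}\right)^{(C-1)k}.
\]
Multiplying through, the probability of a ``bad'' connected set of size exactly $k$ is at most
\[
n(ed)^{k-1} \left(\frac{\delta}{d}\right)^{k-1} \left(\frac{e\delta}{2(C-1)}\right)^{(C-1)k} = n(e\delta)^{k-1}\left(\frac{e\delta}{2(C-1)}\right)^{(C-1)k}.
\]

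Finally I would choose $C = C(\delta)$ large enough that $e\delta \cdot \bigl(\tfrac{e\delta}{2(C-1)}\bigr)^{C-1} \leq 1/4$, so the above bound is at most $(e\delta)^{-1}(1/4)^{k}$. Summing over $k \geq d$ gives a total of $O\bigl(n \cdot 4^{-d}\bigr) = O(2^{-d}) = o(1)$, so whp no such set exists for any $k \geq d$. I do not anticipate a real obstacle; the only mildly delicate point is making sure the double-counting in the union bound (each connected $S$ is counted once per rooted spanning tree in $Q^d_p[S]$, which is harmless for an upper bound) and the choice of $C$ are handled cleanly so that the geometric sum beats the $n = 2^d$ factor, which the computation above does with room to spare.
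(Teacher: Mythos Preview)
Your proof is correct and follows essentially the same strategy as the paper's: a union bound over rooted spanning trees (counted via Lemma~\ref{l:treecount}), times the probability the tree appears, times a tail bound on the number of extra edges inside $S$. The only difference is in that last factor: the paper bounds $e_{Q^d}(S)\le \frac{k\log_2 k}{2}$ via the edge-isoperimetric inequality (Theorem~\ref{t:iso}) and then uses a crude $\binom{\cdot}{C'k}p^{C'k}$ estimate, whereas you use the trivial bound $e_{Q^d}(S)\le kd/2$ together with the Chernoff inequality from Lemma~\ref{l:Chernoff}\ref{i:chernoff2} --- a mild simplification that avoids the isoperimetric input altogether.
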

\begin{proof}
Note that, if $S\subseteq V(Q^d)$ has size $|S|=:k$, then, since $Q^d$ is $d$-regular, it follows from Theorem \ref{t:iso} that $e_{Q^d}(S) \leq \frac{k \log_2 k}{2}$. 

Let us bound from above the probability that there exists a subset of $V(Q^d_p)$ of size $k$ which is connected and spans at least $Ck$ many edges. Such a subset must span a tree, which we can specify by choosing a vertex and one of the at most $(ed)^{k-1}$ trees of size $k$ containing that vertex, using Lemma \ref{l:treecount} to bound this quantity. This tree is contained in $Q^d_p$ with probability $p^{k-1}$.

If we let $S$ be the vertex set of this tree, then by the above comment $e_{Q^d}(S) \leq \frac{k \log_2 k}{2}$. In order for $e_{Q^d_p}(S) \geq Ck$ there must be a set of $Ck - (k-1) \geq (C-1)k$ further edges of $Q^d$ in $S$ which appear in $Q^d_p$, which happens with probability at most $p^{(C-1)k}$. 

Hence, writing $C' = C-1$ for ease of presentaiton, by the union bound, the probability that such a set of size $k \geq d$ exists is at most
\begin{align*}
\sum_{k=d}^{n} n(ed)^{k-1} p^{k-1}\binom{\frac{k \log_2 k}{2}}{C'k} p^{C'k} &\leq \sum_{k=d}^{n} 2^k ( e\delta)^{k-1} \left(\frac{e\delta \log_2 k}{2C'd} \right)^{C'k}\\
&\leq \sum_{k=d}^{n}  \left(\frac{2^{\frac{1}{C'}}(e\delta)^{1 + \frac{k-1}{kC'}}}{4C'}\right)^{C'k} =o(1),
\end{align*}
as long as $C' =C-1$ is sufficiently large in terms of $\delta$.

\end{proof}

\section{Expansion in the giant component}\label{s:expansion}
We begin by establishing some likely properties of the giant component of $Q^d_p$ which will be useful in our proof.

The first says that whp the second largest component of $Q^d_p$ in the supercritical regime is only of linear size in $d$. We note that it is mentioned already in \cite{AKS81} that such a result can be shown using methods of Koml\'{o}s, Sulyok and Szemer\'{e}di from \cite{KSS80}, however a proof can be found in \cite[Theorem 31]{BKL92}.

\begin{lemma}[{\cite[Theorem 31]{BKL92}}]\label{l:secondlargest}
Let $0< \delta <1$ and let $p = \frac{1+\delta}{d}$. Then there exists a constant $K_1:=K_1(\delta) >0$ such that the second largest component in $Q^d_p$ has order at most $K_1d$.
\end{lemma}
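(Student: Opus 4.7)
The plan combines a first-moment argument for components of moderate size with a sprinkling argument for larger components. The first step bounds the expected number $Y_k$ of components of $Q^d_p$ of size $k$: by Lemma \ref{l:treecount} there are at most $n(ed)^{k-1}/k$ connected vertex subsets of size $k$ in $Q^d$; for such a subset $S$ to be a component, some spanning tree of $Q^d[S]$ must appear in $Q^d_p$ (probability $p^{k-1}$), and all of the $\geq k(d-\log_2 k)$ boundary edges of $S$ (by Theorem \ref{t:iso}) must be absent (probability $(1-p)^{k(d-\log_2 k)}$). With $p=(1+\delta)/d$, these inequalities give
\[
\mathbb{E}[Y_k] \leq \frac{n}{k \cdot e(1+\delta)}\exp\bigl(-kh(\delta) + (1+\delta)k\log_2 k/d\bigr),
\]
where $h(\delta):=\delta-\log(1+\delta)>0$. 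Setting $\alpha := h(\delta)/(2(1+\delta))$, for $k \leq 2^{\alpha d}$ the exponent is at most $-kh(\delta)/2$, so summing the resulting geometric series over $k \geq K_1 d$ gives $o(1)$ provided $K_1 > 2\log 2 / h(\delta)$.

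For $k > 2^{\alpha d}$ the bound above fails because the $(1+\delta)k\log_2 k /d$ term dominates, and I would instead use a sprinkling argument. Decompose $p = 1-(1-p_1)(1-p_2)$ with $p_1 = (1+\delta/2)/d$ and $p_2 \approx \delta/(2d)$, and condition on $Q^d_{p_1}$. By Theorem \ref{t:AKSfine} applied at $p_1$, whp $Q^d_{p_1}$ has a unique giant $L_1^{(1)}$ of order $(\gamma_1 \pm o(1))n$ with $\gamma_1 := \gamma(\delta/2) > 0$. Any component $C$ of $Q^d_p$ is a union of $Q^d_{p_1}$-components; if $V(C) \supseteq V(L_1^{(1)})$ then $|V(C)| \geq \gamma_1 n$ and Theorem \ref{t:AKSfine} applied at $p$ (with the constant taken less than $\gamma_1$) forces $C$ to coincide with the unique giant of $Q^d_p$. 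Otherwise $V(C) \cap V(L_1^{(1)}) = \emptyset$, so $|V(C)| \leq (1-\gamma_1+o(1))n$ and Theorem \ref{t:iso} yields at least $\beta|V(C)|$ boundary edges in $Q^d$, for a constant $\beta = \log_2(1/(1-\gamma_1)) > 0$. All of these boundary edges must avoid $Q^d_{p_2}$, an event of (conditional) probability at most $\exp(-\beta p_2 |V(C)|) = \exp(-\Omega(2^{\alpha d}/d))$, which is doubly-exponentially small in $d$.

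The main obstacle is the union bound over candidate vertex sets $V(C)$ in the sprinkling step, since a naive count gives $2^n$ candidates and overwhelms the sprinkling gain. To manage this, I would iterate the first-moment argument of the first step at parameter $p_1$ to restrict non-giant $Q^d_{p_1}$-components to sizes either $\leq K_1' d$ or $\geq 2^{\alpha_1 d}$ (for some $K_1', \alpha_1 > 0$), and then combine with a careful combinatorial/entropy bound on the number of large unions of such components. A cleaner alternative is to couple the BFS exploration of the component of a fixed vertex $v_0$ in $Q^d_p$ with a Poisson$(1+\delta)$ branching process, apply the Borel-Tanner estimate $\mathbb{P}[T=k] = \Theta(k^{-3/2})e^{-kh(\delta)}$, and union-bound over the $n$ choices of $v_0$; this gives $o(1)$ for $K_1 > \log 2 / h(\delta)$, but demands careful control of the coupling error once the explored set has size comparable to $n$, handled e.g.\ by stopping the exploration at $\gamma n/4$ and invoking the uniqueness of the giant from Theorem \ref{t:AKSfine}.
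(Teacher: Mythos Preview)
The paper does not supply its own proof of this lemma; it is quoted verbatim as \cite[Theorem~31]{BKL92}, so there is no in-paper argument to compare your proposal against.

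Your first-moment computation for the range $K_1 d\le k\le 2^{\alpha d}$ is correct and is the standard opening of such a proof: enumerate spanning trees via Lemma~\ref{l:treecount}, use Theorem~\ref{t:iso} for the boundary, and exploit independence of the two edge sets. The arithmetic leading to $h(\delta)=\delta-\log(1+\delta)$ and the choice $\alpha=h(\delta)/(2(1+\delta))$ are fine.

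The sprinkling step for $k>2^{\alpha d}$, however, is genuinely incomplete, and the obstacle you name is real rather than cosmetic. The candidate component $C$ is a function of \emph{both} percolation rounds, so one cannot fix $C$ and then compute a $p_2$-probability; and the naive union bound over unions of $p_1$-components is $2^{\Theta(n)}$, which swamps the $\exp(-\Omega(2^{\alpha d}/d))$ gain. Neither of your proposed fixes closes this as written. In particular, the branching-process alternative has a specific flaw: the Galton--Watson tree dominating $|C(v_0)|$ is \emph{supercritical}, so $\mathbb{P}[T\ge K_1 d]\ge\gamma>0$ for every $K_1$, and the Borel--Tanner estimate you quote only describes the distribution of $T$ \emph{conditioned on extinction}. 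Passing to that subcritical dual is precisely the step that needs justification, and ``stopping the exploration at $\gamma n/4$'' does not by itself convert the supercritical domination into a subcritical one. The ``iterate the first-moment at $p_1$ plus entropy bounds'' suggestion is in the right direction---one does want to know that non-giant $p_1$-components are themselves small, so that the union bound in the sprinkling runs over at most $2^{n^{1-\alpha_1}}$ bipartitions of the at most $n^{1-\alpha_1}$ large $p_1$-clusters---but making this non-circular (you are, after all, trying to prove the very gap statement you would be invoking at level $p_1$) requires a genuine inductive or two-scale argument that your outline does not supply. For a complete treatment one really does need to consult \cite{BKL92}.
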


We will also use the following consequence of Lemma \ref{l:secondlargest}.

\begin{lemma}\label{l:outsidegiant}
Let $0 < \delta_1 <1$ and $\delta_2 \ll \delta_1$, let $q_1 = \frac{1+\delta_1}{d}$ and $q_2= \frac{\delta_2}{d}$ and let $L'_1$ and $L_1$ be the largest components in $Q_1 := Q^d_{q_1}$ and $Q_2:= Q_1 \cup Q^d_{q_2}$, respectively. Given a vertex $v \in V(L'_1)$, let $C_v$ be the set of vertices which are contained in some component of $L_1 - L'_1$ which is adjacent to $v$ in $Q_2$. Then there exists a constant $K_2:=K_2(\delta_1) >0$ such that whp $|C_v| \leq K_2 d$ for every $v \in V(L'_1)$.
\end{lemma}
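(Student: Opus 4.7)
The plan is to condition on $Q_1$ (and hence on $L'_1$), apply Lemma~\ref{l:secondlargest} to ensure that whp every component of $Q_1$ other than $L'_1$ has size at most $K_1d$ for some $K_1 := K_1(\delta_1)$, and then bound $|C_v|$ via a branching-process exploration of $C_v$. The key structural observation that I would use repeatedly is that, since $L'_1$ is a full component of $Q_1$, every edge of $Q_2$ that crosses between $L'_1$ and its complement must in fact be an edge of $Q^d_{q_2}$.

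Concretely, I would identify $\{v\}\cup C_v$ with the component of $v$ in the graph $Q_2 - (V(L'_1)\setminus\{v\})$ and explore it by BFS. The first step reveals $Q^d_{q_2}$-edges from $v$ to $V\setminus V(L'_1)$; each newly discovered vertex $u$ lies in a $Q_1$-component of size at most $K_1d$ which is then entirely absorbed into the explored set before the exploration continues. Subsequent steps reveal fresh $Q^d_{q_2}$-edges from the absorbed vertices to the unexplored portion of $V\setminus V(L'_1)$, again absorbing whole $Q_1$-components at each hit. This exploration is dominated by a branching process $\mathcal{B}$ whose individuals are components of $Q_1-L'_1$, weighted by their sizes; the offspring count of an individual $C$ is stochastically dominated by $\mathrm{Bin}(|C|d, q_2)$ and hence has mean at most $|C|\delta_2$.

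Subcriticality of $\mathcal{B}$ when $\delta_2\ll\delta_1$ would follow from the fact that a typical $Q_1$-component outside $L'_1$ has bounded expected size, namely, a constant $C_{\mathrm{avg}} = C_{\mathrm{avg}}(\delta_1)$ independent of $d$. This reflects the (asymptotic) identification of the non-giant components of $Q^d_{q_1}$ with the total progeny of a subcritical branching process of mean $(1+\delta_1)(1-\gamma(\delta_1))<1$, where $\gamma(\cdot)$ is as in Theorem~\ref{t:AKSfine}. Consequently the expected weighted offspring of $C$ in $\mathcal{B}$ is at most $|C|\cdot\delta_2\cdot C_{\mathrm{avg}}$, and choosing $\delta_2$ small enough (permitted since $\delta_2\ll\delta_1$) gives $\mu:=\delta_2 C_{\mathrm{avg}}<1/2$. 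A standard exponential-tail estimate for subcritical branching processes, combined with concentration of the sum of absorbed component sizes around their constant mean, would then yield $\mathbb{P}(|C_v|\geq k)\leq C_3\exp(-c_3 k)$ for some constants $C_3, c_3>0$ depending only on $\delta_1$ and, crucially, independent of $d$.

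Taking $K_2\geq (\log 2 + 1)/c_3$ and applying the union bound over the at most $n=2^d$ vertices $v\in V(L'_1)$ would complete the proof. The main technical obstacle I anticipate is establishing the exponential tail on $|C_v|$ with rate $c_3$ independent of $d$: a naive bound of the form $|C_v|\leq K_1 d\cdot|\mathcal{B}|$, where $|\mathcal{B}|$ counts the number of components encountered, only yields a tail rate of order $1/d$, which is too weak for the union bound over $2^d$ vertices. The resolution is to exploit that the individual component sizes themselves have bounded exponential moments, so that the total weighted progeny of the subcritical process $\mathcal{B}$ concentrates at a rate depending only on $\delta_1$.
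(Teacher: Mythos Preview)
Your approach via a branching-process exploration of components is genuinely different from the paper's, and as stated it has a real gap.

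The paper does not use a branching-process comparison at all. Having observed (via Lemma~\ref{l:secondlargest}) that $|C_v|\ge K_2 d$ forces the existence of a connected set $\hat C\cup\{v\}$ in $Q_2$ with $K_2 d\le |\hat C|\le (K_1+K_2)d$ whose edge-boundary is absent from $Q_1$, the paper runs a direct first-moment argument over spanning trees of such sets. By Lemma~\ref{l:treecount} there are at most $n(ed)^{k-1}$ rooted trees of size $k$; each appears in $Q_2$ with probability at most $(q_1+q_2)^{k-1}$; and, crucially, the hypercube edge-isoperimetric inequality (Theorem~\ref{t:iso}) provides at least $(k-1)(d-\log_2(k-1))$ boundary edges, none of which may lie in $Q_1$. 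The resulting factor $(1-q_1)^{(1-o(1))kd}$ overwhelms $(e(1+\delta_1+\delta_2))^{k-1}$ by an $e^{-\Theta(\delta_1^2)k}$ margin, which for $k\ge K_2 d$ is enough to beat the union bound over the $n$ choices of root.

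The gap in your argument is the subcriticality claim for the weighted process $\mathcal B$ after conditioning on $Q_1$. Once $Q_1$ is fixed the component sizes are deterministic, so there is no ``expected size $C_{\mathrm{avg}}$'' or ``exponential moment'' of a hit component to appeal to; the weight absorbed on hitting a specific vertex $w$ is simply the size of $w$'s $Q_1$-component, which may be as large as $K_1 d$. Nothing you have assumed rules out a vertex $v\in L'_1$ whose exploration repeatedly encounters components of size $\Theta(d)$: from such a component the expected number of fresh $q_2$-hits is $\Theta(d)\cdot\delta_2\to\infty$, and the weighted offspring mean is larger still, so the process is not subcritical in the sense you need. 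Your proposed fix via exponential moments of component sizes only makes sense if those sizes are still random, i.e.\ before conditioning on $Q_1$; but then $L'_1$ is undetermined, the events ``$w\notin L'_1$'' are globally correlated, and the ``supercritical conditioned to die out $\Rightarrow$ subcritical'' duality you are implicitly invoking is rigorous for $G(n,p)$ but is not available for $Q^d_{q_1}$ with the uniformity your union bound requires. The isoperimetric first-moment argument sidesteps all of this.
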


\begin{proof}
We first note that by, Lemma \ref{l:secondlargest}, there exists a constant $K_1:=K_1(\delta_1)$ such that whp every component of $Q_1$ except $L'_1$ has order at most $K_1d$. Let $K_2 \gg \delta^{-1}_1$.

Suppose that there is some vertex $v \in V(L'_1)$ such that $|C_v|\geq K_2 d$. We note that $C_v \cup \{v\}$ is connected in $Q_2$, and $C_v$ is the disjoint union of some set $\{C_1,\ldots, C_r\}$ where each $C_i$ is the vertex set of some component of $Q_1$, each of which has size at most $K_1 d$. It follows that there must be some subset $\hat{C} \subseteq C_v$ such that $\hat{C} \cup \{v\}$ is connected in $Q_2$, $K_2 d \leq |\hat{C}| \leq (K_1 + K_2)d$ and $\hat{C}$ is the union of some subset of $\{C_1,C_2, \ldots, C_r\}$.

In particular, there is some spanning tree $T$ of $\hat{C} \cup \{v\}$, all of whose edges are present in $Q_2$, such that no edge in the edge-boundary of $V(T) \setminus \{v\}$ is present in $Q_1$. 

Let us bound the probability that such a tree of size $k$ exists in $Q_2$ for each \[
K_2d +1 \leq k \leq (K_1 + K_2)d+1.\] We can fix such a tree $T$ by choosing a root vertex $v$ and choosing one of the at most $(ed)^{k-1}$ possible rooted trees of size $k$ with root $v$ in $Q^d$, where we have bounded the number of possible trees by Lemma \ref{l:treecount}.

Now, $T$ has $k-1$ edges and by Theorem \ref{t:iso}  there are at least $(k-1)(d-\log_2 (k-1))$ edges in the edge-boundary of $V(T) \setminus \{v\}$. Note that each edge is in $Q_2$ with probability at most $(q_1 + q_2)$ and each edge is not in $Q_1$ with probability $(1-q_1)$, and that that, whilst these two events are not necessarily independent, they are clearly negatively correlated. 
%Furthermore, we have that the probability that an edge is in $Q_2$ and not in $Q_1$ is $q_2(1-q_1) \leq (q_1 + q_2)(1-q_1)$, which we will use to overcount the probability that edges in the edge-boundary of $V(T) \setminus \{v\}$ which are also in $T$ are present in $Q_2$ and not in $Q_1$ to make our calculations simpler.

It follows by the union bound that the probability that such a tree of size $k$ exists in $Q_2$ is at most

\begin{equation*}
n(ed)^{k-1} \left(q_1 + q_2\right)^{k-1} \left(1- q_1\right)^{(k-1)(d -\log_2 (k-1))}.
\end{equation*}

In particular, the probability that such a tree exists for $k \in I:=[ K_2 d +1,(K_1 + K_2)d+1]$ is at most
\begin{align*}
&n\sum_{k\in I} (e(1+\delta_1 + \delta_2))^{k-1}\left(1- \frac{1+\delta_1}{d}\right)^{(1-o(1))(k-1)d}\\
&\leq  n\sum_{k\in I} \exp\left( (k-1)\left( 1 + \log(1+\delta_1 + \delta_2) - (1-o(1))(1+\delta_1) \right)\right)\\
&\leq n \sum_{k\in I} \exp\left( -\frac{(k-1)\delta^2_1}{5}\right) = o(1),
\end{align*}
where we used that $\log (1+ \delta_1 + \delta_2) \leq \delta_1 - \frac{\delta^2_1}{4}$ for all $\delta_1 \in (0,1)$, since $\delta_2 \ll \delta_1$ and that ${K_2d\delta_1^2  \gg d}$, since $K_2 \gg \delta^{-1}_1$.
\end{proof}

The next lemma says that whp the giant component of $Q^d_p$ is in some sense `dense' in the hypercube $Q^d$.

\begin{lemma}\label{l:dense}
Let $\delta > 0$ and let $p = \frac{1+\delta}{d}$. Then there exists a constant $c > 0$ such that whp every vertex in $Q^d$ is at distance at most two from at least $c d^2$ vertices in the largest component $L_1$ of $Q^d_p$.
\end{lemma}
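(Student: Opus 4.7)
The plan is a two-round sprinkling argument. Let $p_1 = \frac{1 + \delta/2}{d}$ and choose $p_2$ so that $Q^d_p$ has the same distribution as $Q^d_{p_1} \cup Q^d_{p_2}$ with $Q^d_{p_1}$ and $Q^d_{p_2}$ independent; then $p_2 = \Theta(1/d)$. First reveal $Q^d_{p_1}$. By Theorem \ref{t:AKSfine} there is whp a giant component $L'_1$ with $|V(L'_1)| \geq (\gamma(\delta/2) - o(1))n$, and by Lemma \ref{l:secondlargest} every other component of $Q^d_{p_1}$ has size at most $K_1 d$. Since $L'_1$ is connected in $Q^d_{p_1} \subseteq Q^d_p$, and its order exceeds the $O(d)$ bound on non-giant components of $Q^d_p$ (by applying Lemma \ref{l:secondlargest} to $Q^d_p$ itself), we have $V(L'_1) \subseteq V(L_1)$ whp. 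Hence it suffices to prove that whp every vertex $v$ satisfies $|B_2(v) \cap V(L'_1)| \geq cd^2$.

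Fix $v$ and consider the bad event $\mathcal{E}_v := \{|B_2(v) \cap V(L'_1)| < cd^2\}$, which forces $B_v := B_2(v) \setminus V(L'_1)$ to satisfy $|B_v| \geq (\tfrac{1}{2} - c)d^2$. On $\mathcal{E}_v$, $B_v$ is covered by a collection $\{C_1,\ldots,C_r\}$ of distinct small components of $Q^d_{p_1}$, each of size $k_i \leq K_1 d$ and with $|U| := \sum_i k_i = \Omega(d^2)$. I would apply a union bound over tree witnesses: for each $C_i$, pick a root in $B_2(v) \cap V(C_i)$ (at most $|B_2(v)|^r \leq d^{2r}$ tuples of roots) and a spanning tree of size $k_i$ rooted there (at most $(ed)^{k_i - 1}$ choices by Lemma \ref{l:treecount}). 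Theorem \ref{t:iso} gives that $V(C_i)$ has edge-boundary at least $k_i(d - O(\log d))$ in $Q^d$, and FKG-style negative correlation between the increasing event ``all spanning trees appear in $Q^d_{p_1}$'' and the decreasing event ``all $V(C_i)$ are isolated'' gives a per-configuration probability at most $\prod_i p_1^{k_i - 1}(1 - p_1)^{k_i(d - O(\log d))}$. Combined with the tree count and using $p_1 d = 1 + \delta/2$, each tree contributes roughly $(e(1 + \delta/2))^{k_i - 1} e^{-(1 + \delta/2)k_i(1 - o(1))}$, which by the Taylor expansion $\log(1+x) - x = -x^2/2 + O(x^3)$ equals $e^{-\Omega(\delta^2 k_i)}$; the total decay is thus $e^{-\Omega(\delta^2 |U|)}$.

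The main obstacle is that the root-selection factor $d^{2r}$ can overwhelm this decay when many $C_i$ are tiny: if most $k_i = O(1)$, then $r$ is comparable to $|U|$ and $d^{2r}$ dominates $e^{-\Omega(\delta^2 |U|)}$ for $d$ large. To overcome this, the $p_2$-sprinkling is essential. Using that by vertex-transitivity of $Q^d$ and Theorem \ref{t:iso} most boundary edges of each $C_i$ run to $V(L'_1)$, a Chernoff-type argument (analogous to Lemma \ref{l:matching}) shows that whp each sufficiently small $C_i$ acquires a $p_2$-sprinkled edge to $V(L'_1)$, merging $V(C_i)$ into $L_1$; Lemma \ref{l:outsidegiant} ensures the sprinkling budget is not exhausted by any single $L'_1$-vertex. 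Splitting the analysis by component size---handling components with $k_i \geq K \log d/\delta^2$ via the tree enumeration above, and tinier ones via the sprinkling---should yield $|B_2(v) \cap V(L_1)| \geq cd^2$ for every $v$ whp, where the delicate calibration of the threshold $K \log d/\delta^2$ and the consistency of both regimes across the final union bound over the $2^d$ choices of $v$ forms the technical heart of the proof.
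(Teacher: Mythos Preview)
Your sprinkling fix for the tiny-component regime does not work as stated, and this is a genuine gap rather than a detail to be filled in. The claim that ``most boundary edges of each $C_i$ run to $V(L'_1)$'' cannot be justified by vertex-transitivity or by Theorem~\ref{t:iso}: once $Q^d_{p_1}$ is revealed, $V(L'_1)$ is a fixed set of density roughly $\gamma(\delta/2)<1$, and for a specific small component $C_i$ there is no reason any of its $Q^d$-neighbours lie in $L'_1$ at all. Even granting a constant fraction of boundary edges into $L'_1$, a size-one component has only $d$ boundary edges, and with $p_2=\Theta(1/d)$ the probability it acquires a sprinkled edge to $L'_1$ is a fixed constant strictly less than one---certainly not ``whp''. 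So you cannot merge each small $C_i$ individually; at best a constant fraction merge, but establishing this with the required $1-o(2^{-d})$ failure probability then needs an independence structure you have not identified (the merge events for nearby vertices of $B_2(v)$ share edges, and their success probabilities depend on the unrevealed local density of $L'_1$, which you have no handle on).

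The paper sidesteps all of this with a different idea that you are missing: it manufactures independence by looking at $\binom{k}{2}=\Theta(d^2)$ pairwise \emph{disjoint} $(d-k)$-dimensional subcubes, each containing exactly one vertex at distance two from $v$, where $k=c'd$. Percolation restricted to each subcube is still supercritical, the events ``$v_i$ lies in the giant of $Q(i)_p$'' are genuinely independent with constant success probability, and a single Chernoff bound gives failure probability $e^{-\Omega(d^2)}=o(2^{-d})$; Lemma~\ref{l:secondlargest} then identifies these subcube giants with $L_1$. No sprinkling, no tree enumeration, and no delicate threshold calibration are needed.
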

\begin{proof}
Let us choose some constant $c' \ll \delta$. Fix an arbitrary vertex $v \in V\left(Q^d\right)$, which without loss of generality we may assume to be the origin. Let $k = c' d$ and let $d' = d-k = (1-c')d$. We define $s=\binom{k}{2}$ pairwise disjoint subcubes of dimension $d'$ at distance at most two from $v$ given by fixing some pair of $1$s in the first $k$ coordinates and varying the last $d'$ coordinates. Let these cubes be $Q(1), \ldots, Q(s)$ and let $v_i$ be the vertex in $Q(i)$ at distance two from $v$.

Now, since $c' \ll \delta$, $p$ is still supercritical in $Q^{d'}$ and so, by Theorem \ref{t:AKSfine} and the fact that $Q^d$ is transitive, there is some constant $\alpha >0$ such that probability that $v_i$ is contained in the largest component of $Q(i)_p$ is at least $\alpha$, and these events are independent for different $i$. Hence, by Lemma \ref{l:Chernoff}, with probability at least $1-\exp\left( -c' s \right)$ at least $\frac{\alpha s}{2}$ of the $v_i$ are contained in the largest component of $Q(i)_p$.

Furthermore, again by Theorem \ref{t:AKSfine} and Lemma \ref{l:Chernoff}, whp each $Q(i)_p$ contains a component whose order is $\Omega\left(2^{d'}\right)$ and again these events are independent for different $i$, and hence with probability at least $1-\exp\left(-c' s \right)$ at least $(1- \frac{\alpha}{4})s$ of the $Q(i)_p$ contain a component whose order is $\Omega\left(2^{d'}\right)$. 

It follows that with probability at least $1-2\exp(- c' \alpha)$ at least $\frac{\alpha s}{4} := c d^2$ of the $v_i$ are contained in a component in $Q(i)_p$ whose order is $\Omega\left(2^{d'}\right)$. Hence $v$ is within distance two of at least $c d^2$ vertices lying in components in $Q^d_{p}$ whose order is $\Omega\left(2^{d'}\right)$ with probability at least ${1-2\exp(-c' s)} = 1-o\left(n^{-1}\right)$.

Then, by the union bound, whp every vertex in $Q^d$ is within distance two of at least $c d^2$ many vertices lying in components in $Q^d_{p}$ whose order is $\Omega\left(2^{d'}\right)$. However, by Lemma \ref{l:secondlargest}, whp there is a unique component $L_1$ in $Q^d_{p}$ whose order is superlinear in $d$, and so whp every vertex in $Q^d$ is within distance two of at least $c d^2$  vertices in $L_1$.
\end{proof}

With these results in hand, let us briefly sketch the strategy to prove Theorem \ref{t:expansionsimple1} about the vertex-expansion properties of the giant component of $Q^d_p$, where $p=\frac{1+\epsilon}{d}$. We will use a sprinkling argument, viewing $Q^d_p$ as the union of two independent random subgraphs $Q^d_{q_1}$ and $Q^d_{q_2}$ where we have chosen $q_1 = \frac{1+\delta_1}{d}$ and $q_2 = \frac{\delta_2}{d}$ such that $(1-q_1)(1-q_2)=1-p$ and $\delta_2 \ll \delta_1$. Note that, in particular, $\delta_1$ is approximately $\epsilon$, and so $q_1$ still lies in the supercritical regime. Let us denote by $L_1'$ and $L_1$ the largest components in $Q_1:=Q^d_{q_1}$ and $Q_2 := Q_1 \cup Q^d_{q_2}$, respectively, noting that $Q_2 \sim Q^d_p$.

Given a partition of $V(L'_1)$ into two disjoint subsets $A,B$ such that $|A|,|B| \geq t$, it is relatively easy to show that whp there is a large family of vertex-disjoint $A\textrm{-}B$-paths of length at most $5$ in $Q^d_{q_2}$. Indeed, by Lemma \ref{l:dense} every vertex in $Q^d$ is within distance two of $L'_1$ and so we can extend $A,B$ to a partition of $V(Q^d)$ into two pieces $A' \supseteq A$ and $B' \supseteq B$ such that every vertex in $A'$ is within distance two of $A$ and every vertex in $B'$ is within distance two of $B$. By Theorem \ref{t:iso} there are many edges in $Q^d$ between $A'$ and $B'$ and each such edge can be extended to an $A\textrm{-}B$-path in $Q^d$ of length at most $5$. Very naively, we can thin this family of paths out to a vertex-disjoint family using the fact that $\Delta(Q^d) = d$ whilst retaining an $\Omega(d^{-6})$ proportion of them.  Furthermore, after sprinkling we expect about an $\Omega(d^{-5})$ proportion of these paths to be contained in $Q^d_{q_2}$. Then, as long as $t$ is large enough, the Chernoff bound will imply that whp there is a large vertex-disjoint family of $A\textrm{-}B$-paths of length at most $5$ in $Q^d_{q_2}$. 

In fact, our actual argument will be a bit more precise, to enable us to find a larger family of paths. However, the probability of failure in these arguments will not be small enough to deduce from a union bound that this holds for \emph{all} such partitions of $V(L'_1)$.

Instead, we can use Lemma \ref{l:treedecomp}, with $\ell$ being some small power of $d$, to split $L'_1$ into a collection $\mathcal{C}$ of connected pieces each having polynomial size in $d$. If these pieces are large enough, then there will be sufficiently few partitions of $\mathcal{C}$ into two pieces that the probability bound from the argument above will be effective, and we can deduce that whp whenever we partition $\mathcal{C}$ into two parts there will be a large family of vertex-disjoint paths between them in $Q^d_{q_2}$.

Furthermore, by Lemma \ref{l:outsidegiant} we may assume that for any vertex $v \in V(L'_1)$ there are only a small number of vertices contained in the components of $R:=L_1 -L'_1$ which are adjacent to $v$ in $Q_2$.  

Suppose then that $S$ is some subset of $V(L_1)$. We split $S$ into three pieces:
\begin{itemize}
\item $S_1$ is the set of vertices which lie in components of $R$;
\item $S_2$ is the set of vertices which are contained in pieces $C \in \mathcal{C}$ such that $C \cap S \neq \emptyset$ and $C \setminus S \neq \emptyset$;
\item $S_3$ is the set of vertices which are contained in pieces $C$ such that $C \subseteq S$.
\end{itemize}

If $S_1$ is large, then since each vertex in $L'_1$ is only adjacent in $Q_2$ to components in $R$ with a small total volume, we can greedily choose a large disjoint family $\{C(x) \colon x \in X\}$ of components of $R$ which all meet $S$, each of which is adjacent in $Q_2$ to a unique vertex $x \in L'_1$.

For each $x \in X$, either $x \in S$, or there is some vertex in the neighbourhood of $S$ in $C(x) \cup \{x\}$. In particular, either $S$ has a large neighbourhood, or $S \cap V(L'_1)= S_2 \cup S_3$ is large.

Similarly, if $S_2$ is large then, since each piece in $\mathcal{C}$ is small, $S_2$ contains vertices in many pieces of $\mathcal{C}$, and for each such piece $C$ we have that $C \setminus S \neq \emptyset$. However, since each piece $C \in \mathcal{C}$ is connected in $Q_1$, each piece such that $C \cap S_2 \neq \emptyset$ contains some vertex in the neighbourhood of $S$, and so the neighbourhood of $S$ is large.

Hence, we may assume that $S_3$ is large and $S_2$ is small. In this case, we look at the partition of $\mathcal{C}$ given by pitting the pieces contained in $S_3$ against the rest. By the above argument whp there is a large family of vertex-disjoint paths between these two partition classes and, since $S_2$ is small, not many of these meet $S_2$. Every path which does not meet $S_2$ starts in $S_3 \subseteq S$ and ends in $S^c$, and so contains some vertex in the neighbourhood of $S$. Hence, in every case we can conclude that $S$ has a large neighbourhood.

Let us begin by proving the following result, which guarantees the likely existence of a large family of vertex-disjoint paths between the two parts of any fixed non-trivial partition of $V(L'_1)$. Note that, since there are subsets $A$ of $V(Q^d)$ whose edge-boundary in $Q^d$ is as small as $|A|(d-\log_2|A|)$, for example subcubes, we cannot hope to guarantee the likely existence of a family of paths from $A$ to $A^c$ in the random subgraph $Q^d_p$ with $p = O\left(\frac{1}{d}\right)$ of size larger than $O\left(|A|\left(1-\frac{\log_2|A|}{d}\right)\right)$. Hence, the following lemma is optimal up to a multiplicative constant.

\begin{lemma}\label{l:familyofpaths}
Let $\delta,c > 0$, let $q = \frac{\delta}{d}$, let $L \subseteq Q^d$ be such that every vertex in $Q^d$ is at distance at most two from at least $c d^2$ vertices in $L$ and let $A \cup B = V(L)$ be a partition of $V(L)$ with $\min \{|A|,|B|\} = t$. Then there exists a constant $c' >0$ such that there exists a family of ${c' t \left(1 - \frac{\log_2 t}{d}\right)}$ vertex-disjoint $A\textrm{-}B$-paths of length at most five in $Q^d_q$ with probability at least $1 - \exp \left( -c' t \left(1 - \frac{\log_2 t}{d}\right)\right)$.
\end{lemma}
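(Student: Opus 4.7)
The plan is a two-round sprinkling argument combining the edge-isoperimetric inequality (Theorem~\ref{t:iso}) with the matching lemma (Lemma~\ref{l:matching}), using the density hypothesis to extend a matching between $A'$ and $B'$ into short $A$--$B$-paths. First, I extend $(A,B)$ to a partition $V(Q^d)=A'\sqcup B'$ with $A\subseteq A'$, $B\subseteq B'$ such that every $v\in A'\setminus A$ has at least $cd^2/2$ vertices of $A$ at distance at most $2$ in $Q^d$ (and symmetrically for $v\in B'\setminus B$), which is possible because the density hypothesis forces at least $cd^2/2$ of the $\geq cd^2$ vertices of $L$ near $v$ to lie on one of the two sides. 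Assuming without loss of generality $|A'|\leq 2^{d-1}$, Theorem~\ref{t:iso} gives $e_{Q^d}(A',B')\geq |A'|(d-\log_2|A'|)$; concavity of $x\mapsto x(d-\log_2 x)$ on $[1,2^{d-1}]$ together with a short case analysis (absorbing the regime $t(d-\log_2 t)>2^{d-1}$ into a constant, where the target bound is anyway $O(2^{d-1}/d)$) yields $e_{Q^d}(A',B')\geq c_1\,t(d-\log_2 t)$.

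I then split $q=q_1+q_2-q_1q_2$ with $q_1=q_2\approx q/2$, so that $Q^d_q$ contains two independent copies $Q^d_{q_1}$ and $Q^d_{q_2}$. Applying Lemma~\ref{l:matching} to the set of $Q^d$-edges between $A'$ and $B'$ with parameter $q_1$ produces a matching $M\subseteq Q^d_{q_1}$ between $A'$ and $B'$ with $|M|\geq c_2\,t(1-\log_2 t/d)$, and with failure probability at most $\exp(-c_2\,t(1-\log_2 t/d))$, already matching the target size and tail. It remains to convert a constant fraction of $M$ into $A$--$B$-paths of length at most $5$ using edges in $Q^d_{q_2}$.

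For each $uv\in M$ with $u\in A'\setminus A$, the construction of $A'$ provides $\Omega(d^2)$ distinct length-$2$ paths from $u$ to $A$ in $Q^d$, each present in $Q^d_{q_2}$ with probability $q_2^2=\Theta(1/d^2)$. A Paley--Zygmund (second-moment) computation, valid because the overlap structure of these candidate paths is easily bounded, shows that at least one such length-$\leq 2$ extension lies in $Q^d_{q_2}$ with probability at least some constant $\rho=\rho(\delta,c)>0$. The analogous statement for the $v$-side holds, and since the two events involve edges in the disjoint vertex-neighbourhoods $N_{\leq 2}(u)$ and $N_{\leq 2}(v)$, they are independent, giving joint extension probability at least $\rho^2$ per matching edge.

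The main obstacle is to promote these per-edge constant success probabilities into a concentration statement for the number of vertex-disjoint extended paths, without weakening the $\exp(-\Omega(t(1-\log_2 t/d)))$ failure probability. The extension events for $uv$ and $u'v'$ are fully independent once $d(\{u,v\},\{u',v'\})\geq 5$, and at most $O(d^4)$ matching edges lie within this distance of any fixed $uv$. I plan to discard, by a pruning argument, a controllable fraction of $M$ whose extensions conflict, and then apply a Chernoff-type bound to the remaining essentially-independent extension events to extract $\Omega(|M|)=\Omega(t(1-\log_2 t/d))$ vertex-disjoint $A$--$B$-paths of length at most $5$. The challenge will be to execute this pruning while losing only a constant factor in $c'$, which I expect to handle through a careful dependency-graph or second-moment argument exploiting the locality of extensions in the hypercube.
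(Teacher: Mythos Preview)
Your overall architecture matches the paper: extend $(A,B)$ to a partition $A'\sqcup B'$ of $V(Q^d)$ with each side close to the corresponding set, apply the edge-isoperimetric inequality (Theorem~\ref{t:iso}) to get $\Omega(s)$ edges across the cut with $s=t(d-\log_2 t)$, use Lemma~\ref{l:matching} to find a matching $M$ of size $\Omega(s/d)$ in $Q^d_q$ between $A'$ and $B'$, and then extend each matched endpoint by at most two steps back into $A$ or $B$. The difficulty is entirely in this last extension step, and the mechanism you propose does not deliver the required failure probability.

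Your plan is: for each $uv\in M$, the $\Omega(d^2)$ candidate length-$2$ paths from $u$ into $A$ give, via Paley--Zygmund, a constant probability $\rho$ of finding one in $Q^d_{q_2}$; then prune $M$ so that the extension events become independent and apply Chernoff. The problem is that two extension events for $uv$ and $u'v'$ are coupled whenever $d(\{u,v\},\{u',v'\})\le 4$, and near any vertex of $Q^d$ there are $\Theta(d^4)$ vertices, hence up to $\Theta(d^4)$ matching edges of $M$. Lemma~\ref{l:matching} gives you \emph{some} large matching; you have no control over how it is distributed in $Q^d$, and nothing prevents it from being concentrated into $\Theta(|M|/d^4)$ clumps of $\Theta(d^4)$ nearby edges. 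Any pruning to independence therefore costs a factor $d^4$, not a constant; the resulting Chernoff exponent is $\Omega(s/d^5)$ rather than $\Omega(s/d)$, and the union bounds downstream (Claims~\ref{c:pathsmall}--\ref{c:pathlarge}) fail. Dependency-graph concentration inequalities of Janson type do not help for the same reason: the effective sample size is $|M|/\Delta$ with $\Delta=\Theta(d^4)$. This is a genuine gap, not a detail.

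The paper avoids it by never invoking a per-edge second-moment argument. Instead it decomposes each length-$2$ extension into two length-$1$ matching steps, and applies Lemma~\ref{l:matching} at every step. Concretely, after $M_1$ (your $M$), for each matched $u\in A'\setminus A$ one fixes a star of $\Omega(d)$ edges from $u$ into $\hat N(A)=\{w\notin A: |N_{Q^d}(w)\cap A|\ge c_1 d\}$; the union of these stars is an edge set $F_2$ of size $|M_1|\cdot\Omega(d)=\Omega(s)$, disjoint from $F_1$, so Lemma~\ref{l:matching} yields a matching $M_2$ of size $\Omega(s/d)$ with failure probability $\exp(-\Omega(s/d))$. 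From each endpoint in $\hat N(A)$ there are again $\Omega(d)$ edges into $A$, giving $F_4$ of size $\Omega(s)$, and so on; a symmetric pair $M_3,M_5$ handles the $B$-side. The point is that the star structure keeps the available edge set at $\Omega(s)$ at every stage, so each of the five applications of Lemma~\ref{l:matching} has the correct exponent. Some case analysis (the cases \ref{i:Alarge}--\ref{i:Aactual} and \ref{i:Blarge}--\ref{i:Bactual} in the paper) is needed to ensure the successive edge sets $F_1,\dots,F_5$ can be taken pairwise disjoint, which is what replaces your sprinkling split of $q$. If you adopt this iterated-matching mechanism in place of the second-moment extension, your outline goes through.
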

\begin{proof}
Throughout the proof we will introduce a sequence of constants $c_1, c_2 ,c_3, \ldots$ under the assumption that each $c_i$ is sufficiently small in terms of the preceding $c_j$, $\delta$ and $c$.

Let $s:= t (d - \log_2 t)$ and let us define
\[
\hat{N}(A) = \left\{ v \in V\left(Q^d\right) \colon v \not\in A \text{ and } \left|N_{Q^d}(v) \cap A\right| \geq c_1 d \right\},
\]
and similarly
\[
\hat{N}(B) = \left\{ v \in V\left(Q^d\right) \colon v \not\in B \text{ and } \left|N_{Q^d}(v) \cap B\right| \geq c_1 d  \right\}.
\]

We first note that we may assume that there are at most $c_6 s$ edges between $A$ and $B$ in $Q^d$, since otherwise, by Lemma \ref{l:matching}, with probability at least $1 - \exp\left(-c_7 sd^{-1}\right)$ there will be a matching of size at least $c_7 sd^{-1}$ between $A$ and $B$ in $Q^d_q$. In particular, we can assume that 
\begin{equation}\label{e:hatneighbd}
|B \cap \hat{N}(A)| \leq c_5 sd^{-1} \qquad \text{    and    } \qquad|A \cap \hat{N}(B)| \leq c_5 sd^{-1}.
\end{equation}

By assumption every vertex in $Q^d$ is at distance at most two from at least $c d^2$  vertices in $L$. Hence, we can partition the vertices of $Q^d$ into two disjoint subsets $A'$ and $B'$ such that $A \subseteq A'$ and $B \subseteq B'$, each vertex in $A' \setminus A$ is within distance two of at least $\frac{c d^2}{2}$ vertices in $A$ and each vertex in $B' \setminus B$ is within distance two of at least $\frac{c d^2}{2}$ vertices in $B$.

Since $|A'|,|B'| \geq \min \{|A|,|B|\} = t$, it follows from Theorem \ref{t:iso} that there is a set $F$ of $s$ edges between $A'$ and $B'$. Then, at least one of the following four cases happens:

\begin{enumerate}[i)]
\item\label{i:Alarge} At least $\frac{s}{4}$ edges of $F$ have an endpoint in $A$;
\item\label{i:Aneighbourlarge} At least $\frac{s}{4}$ edges of $F$ have an endpoint in $A' \cap \hat{N}(A)$;
\item\label{i:AmeetsN(B)} At least $\frac{s}{4}$ edges of $F$ have an endpoint in $A' \cap \hat{N}(B)$;
\item\label{i:Aactual} At least $\frac{s}{4}$ edges of $F$ have an endpoint in $A_0$, where $A_0 = A' \setminus (A \cup \hat{N}(A)\cup \hat{N}(B))$.
\end{enumerate}

We will see that case \ref{i:Aactual} is the most complicated case, so let us assume for now that cases \ref{i:Alarge}--\ref{i:AmeetsN(B)} do not hold. We will indicate briefly how to deal with the other cases at the end. If we let $F' \subseteq F$ be a set of $\frac{s}{4}$ edges, each of which has an endpoint in $A_0$, then again at least one of the following four cases happens:
\begin{enumerate}[I)]
\item\label{i:Blarge} At least $\frac{s}{16}$ edges of $F'$ have an endpoint in $B$;
\item\label{i:Bneighbourlarge} At least $\frac{s}{16}$ edges of $F'$ have an endpoint in $B' \cap \hat{N}(B)$;
\item\label{i:BmeetsN(A)} At least $\frac{s}{16}$ edges of $F'$ have an endpoint in $B' \cap \hat{N}(A)$;
\item\label{i:Bactual} At least $\frac{s}{16}$ edges of $F'$ have an endpoint in $B_0$, where $B_0 = A' \setminus (A \cup \hat{N}(A)\cup \hat{N}(B))$.
\end{enumerate}
Again we will assume for now that that cases \ref{i:Blarge}--\ref{i:BmeetsN(A)} do not hold. 

We will construct our family of paths using a sequence of matchings: the first $M_1$ between $A'$ and $B'$; then $M_2$ and $M_3$ joining some subset of the endpoints of $M_1$ to $\hat{N}(A)$ and $\hat{N}(B)$, respectively; and then finally $M_4$ and $M_5$ joining some subset of the endpoints of these matchings to $A$ and $B$, respectively. See Figure \ref{f:matchings}.

\begin{figure}[ht!]
\center
\begin{tikzpicture}[line cap=round,line join=round,>=triangle 45,scale=0.7]
\draw [rotate around={90:(4,0)},line width=1pt] (4,0) ellipse (3.16cm and 1cm);
\draw [line width=1pt] (7,0) circle (1cm);
\draw [line width=1pt,gray] (6.6,-0.6)-- (4,-1.5);
\draw [line width=1pt,gray] (6.4,-0.2)-- (4,-0.5);
\draw [line width=2pt] (6.4,0.2)-- (4,0.5);
\draw [line width=1pt,gray] (6.6,0.6)-- (4,1.5);
\draw [line width=1pt,gray] (4,-2.5)-- (1.5,-3);
\draw [line width=1pt,gray] (4,-1.5)-- (1.5,-1.7);
\draw [line width=1pt,gray] (4,-0.5)-- (1.5,-0.6);
\draw [line width=2pt] (4,0.5)-- (1.5,0.6);
\draw [line width=1pt,gray] (4,1.5)-- (1.5,1.7);
\draw [line width=1pt,gray] (4,2.5)-- (1.5,2.8);
\draw [line width=1pt] (-7,0) circle (0.9930680722671233cm);
\draw [rotate around={90:(-4,0)},line width=1pt] (-4,0) ellipse (3.16cm and 1cm);
\draw [line width=2pt] (-6.4,0.2)-- (-4,0.5);
\draw [line width=1pt,gray] (-6.4,-0.2)-- (-4,-0.5);
\draw [line width=1pt,gray] (-6.6,-0.6)-- (-4,-1.5);
\draw [line width=1pt,gray] (-6.6,0.6)-- (-4,1.5);
\draw [line width=1pt,gray] (-4,2.5)-- (-1.5,2.8);
\draw [line width=1pt,gray] (-4,1.5)-- (-1.5,1.7);
\draw [line width=2pt] (-4,0.5)-- (-1.5,0.6);
\draw [line width=1pt,gray] (-4,-0.5)-- (-1.5,-0.6);
\draw [line width=1pt,gray] (-4,-1.5)-- (-1.5,-1.7);
\draw [line width=1pt,gray] (-4,-2.5)-- (-1.5,-3);
\draw [line width=1pt,gray] (-1.5,-3.9)-- (1.5,-3.9);
\draw [line width=1pt,gray] (-1.5,-3)-- (1.5,-3);
\draw [line width=1pt,gray] (-1.5,-1.7)-- (1.5,-1.7);
\draw [line width=1pt,gray] (-1.5,-0.6)-- (1.5,-0.6);
\draw [line width=2pt] (-1.5,0.6)-- (1.5,0.6);
\draw [line width=1pt,gray] (-1.5,1.7)-- (1.5,1.7);
\draw [line width=1pt,gray] (-1.5,2.8)-- (1.5,2.8);
\draw [line width=1pt,gray] (-1.5,3.9)-- (1.5,3.9);
\draw [line width=1pt] (-9,5)-- (-1,5);
\draw [line width=1pt] (-1,5)-- (-1,-5);
\draw [line width=1pt] (-1,-5)-- (-9,-5);
\draw [line width=1pt] (-9,-5)-- (-9,5);
\draw [line width=1pt] (9,5)-- (1,5);
\draw [line width=1pt] (1,5)-- (1,-5);
\draw [line width=1pt] (1,-5)-- (9,-5);
\draw [line width=1pt] (9,-5)-- (9,5);
\draw (-8.7,4.8) node[anchor=north west] {$A'$};
\draw (8,4.8) node[anchor=north west] {$B'$};
\draw (-7.7,0.5) node[anchor=north west] {$A$};
\draw (7,0.5) node[anchor=north west] {$B$};
\draw (-4.8,4.3) node[anchor=north west] {$\hat{N}(A)$};
\draw (3.2,4.3) node[anchor=north west] {$\hat{N}(B)$};
\draw (-5.9,-5.2) node[anchor=north west] {$M_4$};
\draw (-3.1,-5.2) node[anchor=north west] {$M_2$};
\draw (-0.4,-5.2) node[anchor=north west] {$M_1$};
\draw (2.3,-5.2) node[anchor=north west] {$M_3$};
\draw (5.,-5.2) node[anchor=north west] {$M_5$};
\begin{scriptsize}
\draw[fill]  (6.4,0.2) circle (2.5pt);
\draw[fill]  (6.4,-0.2) circle (2.5pt);
\draw[fill]  (6.6,-0.6) circle (2.5pt);
\draw[fill]  (6.6,0.6) circle (2.5pt);
\draw[fill]  (6.4,0.2) circle (2.5pt);
\draw[fill]  (6.4,-0.2) circle (2.5pt);
\draw[fill]  (6.6,-0.6) circle (2.5pt);
\draw[fill]  (6.6,0.6) circle (2.5pt);
\draw[fill]  (4,0.5) circle (2.5pt);
\draw[fill]  (4,-0.5) circle (2.5pt);
\draw[fill]  (4,1.5) circle (2.5pt);
\draw[fill]  (4,-1.5) circle (2.5pt);
\draw[fill]  (4,2.5) circle (2.5pt);
\draw[fill]  (4,-2.5) circle (2.5pt);
\draw[fill]  (1.5,0.6) circle (2.5pt);
\draw[fill]  (1.5,-0.6) circle (2.5pt);
\draw[fill]  (1.5,-1.7) circle (2.5pt);
\draw[fill]  (1.5,1.7) circle (2.5pt);
\draw[fill]  (1.5,2.8) circle (2.5pt);
\draw[fill]  (1.5,3.9) circle (2.5pt);
\draw[fill]  (1.5,-3) circle (2.5pt);
\draw[fill]  (1.5,-3.9) circle (2.5pt);
\draw[fill]  (-1.5,3.9) circle (2.5pt);
\draw[fill]  (-1.5,2.8) circle (2.5pt);
\draw[fill]  (-1.5,1.7) circle (2.5pt);
\draw[fill]  (-1.5,0.6) circle (2.5pt);
\draw[fill]  (-1.5,-0.6) circle (2.5pt);
\draw[fill]  (-1.5,-1.7) circle (2.5pt);
\draw[fill]  (-1.5,-3) circle (2.5pt);
\draw[fill]  (-1.5,-3.9) circle (2.5pt);
\draw[fill]  (-4,2.5) circle (2.5pt);
\draw[fill]  (-4,1.5) circle (2.5pt);
\draw[fill]  (-4,0.5) circle (2.5pt);
\draw[fill]  (-4,-0.5) circle (2.5pt);
\draw[fill]  (-4,-1.5) circle (2.5pt);
\draw[fill]  (-4,-2.5) circle (2.5pt);
\draw[fill]  (-6.4,-0.2) circle (2.5pt);
\draw[fill]  (-6.4,0.2) circle (2.5pt);
\draw[fill]  (-6.6,0.6) circle (2.5pt);
\draw[fill]  (-6.6,-0.6) circle (2.5pt);
\end{scriptsize}
\end{tikzpicture}\caption{The sequence of matchings $M_1$--$M_5$, with one of the paths resulting from the concatenation highlighted in bold. Note that, unlike in the diagram, it may be the case that $\hat{N}(A)$ meets $B'$, or $\hat{N}(B)$ meets $A'$.}\label{f:matchings}
\end{figure}
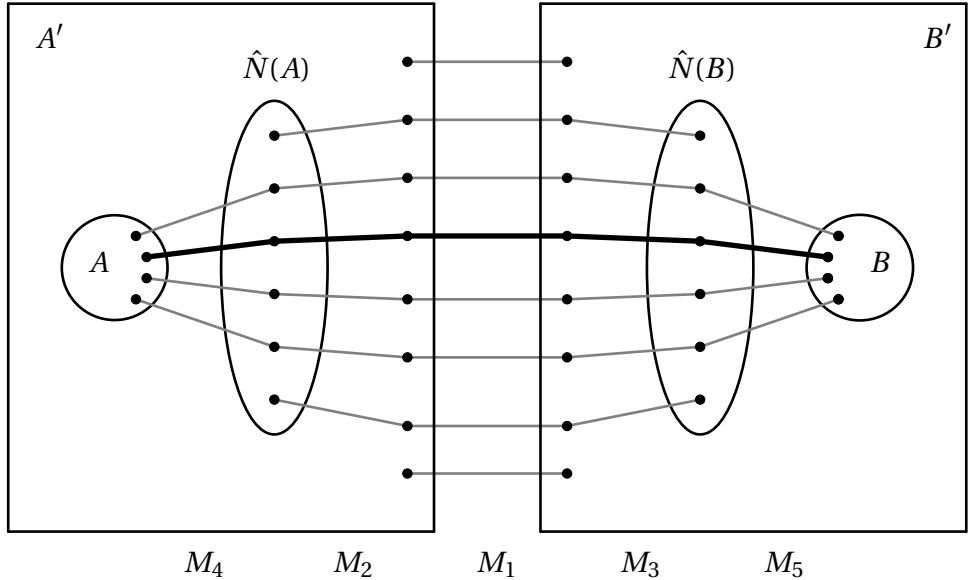

Let $F_1\subseteq F'$ be a set of $\frac{s}{16}$ edges whose endpoints lie in $A_0$ and $B_0$. Then, by Lemma \ref{l:matching}, with probability at least $1- \exp\left( - c_2 s d^{-1}\right)$ there is a matching $M_1$ contained in $(F_1)_q$ of size at least $c_2 s d^{-1}$. Let $A_1 \subseteq A_0$ and $B_1 \subseteq B_0$ be the endpoints of this matching.

Since each vertex in $A_1$ is within distance two in $Q^d$ of at least $\frac{c d^2}{2}$ vertices in $A$, $\Delta(Q^d) = d$, and no vertex in $A_1$ is in $A$ or $\hat{N}(A)$, it follows that we can fix, for each vertex $u_i \in A_1$, a star $T_i$ in $Q^d$ rooted at $u_i$ with $c_1 d$ leaves, such that each leaf is adjacent in $Q^d$ to at least $c_1 d$ vertices in $A$ and no leaf is in $A$, and so each leaf is in $\hat{N}(A)$. Note that, since each edge in these stars is from $A_1 \subseteq A'$ to $\hat{N}(A)$ and $B_0 \subseteq B'$ is disjoint from $A' \cup \hat{N}(A)$, it follows that none of the edges in these stars lie in $F_1$, each edge of which meets $B_0$.

Let $C_1$ be the set of vertices which are leaves in some star $T_i$ and let $F_2$ be the set of edges between $A_1$ and $C_1$ contained in these stars. Then $F_2 \cap F_1 = \emptyset$ and $|F_2| = |A_1|c_1 d = c_1 c_2 s$.

Then, again by Lemma \ref{l:matching}, with probability at least $1 - \exp\left(-c_3 s d^{-1} \right)$ there is a matching $M_2$ contained in $(F_2)_q$ of size at least $c_3 s d^{-1}$. Let $A_2 \subseteq A_1$ be the endpoints of this matching in $A_1$ and let $B_2$ be the set of vertices in $B_1$ joined to a vertex in $A_2$ via the matching $M_1$. We now make a similar argument for the vertices in $B_2$.

Namely, since each vertex in $B_2$ is within distance two in $Q^d$ of at least $\frac{c d^2}{2}$ vertices in $B$, $\Delta(Q^d) = d$, and no vertex in $B_2$ is in $B$ or adjacent to $B$, it follows that we can fix, for each vertex $v_i \in B_2$ a star $T'_i$ in $Q^d$ rooted at $v_i$ with $c_1 d$ leaves, such that each leaf is adjacent in $Q^d$ to at least $c_1 d$ vertices in $B$ and no leaf is in $B$, and so each leaf is in $\hat{N}(B)$. Note that, as before, since each edge in these stars is from $B_2 \subseteq B'$ to $\hat{N}(B)$ and $A_0 \subseteq A'$ is disjoint from $B' \cup \hat{N}(B)$, none of the edges in these stars lie in $F_1$ or $F_2$, each edge of which has an endpoint in $A_0$.

Hence, if we let $D_1$ be the set of vertices which are leaves in some star $T'_i$ and consider the set of edges $F_3$ between $B_2$ and $D_1$ contained in these stars, then $F_3 \cap (F_1 \cup F_2) = \emptyset$ and $|F_3| = |B_2|c_1 d = c_1 c_3 s$. Again, by Lemma \ref{l:matching}, we can conclude that with probability at least $1 - \exp\left(-c_4 s d^{-1} \right)$ there is a matching $M_3$ contained in $(F_3)_q$ of size at least $2c_4 s d^{-1}$.

By combining the matchings $M_1,M_2$ and $M_3$, we obtain a family of vertex-disjoint paths $\mathcal{P}' = \{P_1, \ldots, P_r\}$ of size $2c_4 s d^{-1}$, where each $P_i$ has one endpoint $x_i \in C_1$, which by construction of $C_1$ is adjacent in $Q^d$ to $c_1 d$  vertices in $A$, and a second endpoint $y_i \in D_1$, which similarly is adjacent in $Q^d$ to $c_1 d$ vertices in $B$. Let $C_2\subseteq C_1$ and $D_2\subseteq D_1$ be the sets of endpoints of $\mathcal{P}'$.

We note that the set of edges between $C_2$ and $A$ and the set of edges between $D_2$ and $B$ do not intersect with the set of edges $F_1 \cup F_2 \cup F_3$ which we already exposed. Indeed, every edge in $F_1 \cup F_2$ has an endpoint in $A_0$, which by construction is disjoint from $C_2 \subseteq \hat{N}(A)$, $D_2 \subseteq \hat{N}(B)$, $A$ and $B$. Similarly every edge in $F_1 \cup F_3$ has an endpoint in $B_0$, which is again disjoint from $C_2,D_2,A$ and $B$. However, the set of edges between $C_2$ and $A$ might intersect with the set of edges between $D_2$ and $B$.

To deal with this, let $I_1$ be the set of $i$ such that some edge from $x_i \in C_2$ to $A$ coincides with an edge from $D_2$ to $B$. Then, since $A \cap B = \emptyset$, it follows that $x_i \in B$. However, since each $x_i \in C_2 \subseteq \hat{N}(A)$, it follows that $\{x_i \colon i\in I_1\} \subseteq \hat{N}(A) \cap B$ and hence $|I_1| \leq  |\hat{N}(A) \cap B|\leq c_5 sd^{-1}$ by \eqref{e:hatneighbd}.

Similarly, if we let $I_2$ be the set of $i$ such that some edge from $y_i \in D_2$ to $B$ coincides with an edge from $C_2$ to $A$, then we can conclude that $|I_2| \leq c_5 sd^{-1}$.

Hence, if we let $\mathcal{P} = \{ P_i \colon i \in [r] \setminus (I_1 \cup I_2) \}$ and let $C_3$ and $D_3$ be the endpoints of these paths, then $|\mathcal{P}| \geq c_4 s d^{-1}$. Then, there is a set $F_4$ of at least $|\mathcal{P}|c_1 d = c_8 s$ edges between $C_3$ and $A$ and by construction $F_4 \cap (F_1 \cup F_2 \cup F_3) = \emptyset$. Hence, by Lemma \ref{l:matching}, with probability at least $1-\exp\left(-c_9 sd^{-1} \right)$ there is a matching $M_4$ of size $c_9 s d^{-1}$ in $(F_4)_q$ between $C_3$ and $A$. Let $D_4 \subseteq D_3$ be the endpoints of the paths in $\mathcal{P}$ whose other endpoint is an endpoint of an edge in $M_4$.

As before, there is a set $F_5$ of at least $c_{10} s$ edges between $D_4$ and $B$, and by construction ${F_5 \cap (F_1 \cup F_2 \cup F_3 \cup F_4) = \emptyset}$. Therefore, again by Lemma \ref{l:matching}, with probability at least ${1-\exp\left(-c_{11} sd^{-1} \right)}$ there is a matching $M_5$ of size $c_{11} s d^{-1}$ in $(F_5)_q$  between $D_4$ and $B$.

In particular, by combining the matchings $M_1,M_2,M_3,M_4$ and $M_5$ we can construct a family of $c_{11} s d^{-1}$ vertex-disjoint $A\textrm{-}B$-paths in $Q^d_q$.

Note that, throughout the argument we assumed a finite number of whp events occurred, and in each case the probability of failure was at most $\exp\left(-c_{11} s d^{-1}\right)$, and so the conclusion holds with probability at least $1- \exp\left(-c_{12} s d^{-1}\right)$. In particular it follows that the claim holds with $c' = c_{12}$.

If one of the cases \ref{i:Alarge}--\ref{i:AmeetsN(B)} or \ref{i:Blarge}--\ref{i:BmeetsN(A)} holds, then we can avoid building some of the matchings $M_i$. For example, if case \ref{i:Aneighbourlarge} holds, then instead of building $M_2$ and $M_4$ we can instead build a large matching directly from $A_0$ to $A$ before building $M_1$, $M_3$ and $M_5$. Similarly, if case \ref{i:Alarge} holds, then we only need to build $M_1$, $M_3$ and $M_5$. If neither case \ref{i:Alarge} nor \ref{i:Aneighbourlarge} hold, but case \ref{i:AmeetsN(B)} holds, then $A'$ must contain at least $\frac{s}{4d}$ vertices of $\hat{N}(B)$, and so there is a set of at least $\frac{c_4 s}{4}$ edges from $A'$ to $B$, using which we can build a matching of size $c_5s$ using Lemma \ref{l:matching}. We can then extend this matching to a family of $A\textrm{-}B$-paths using matchings $M_2$ and $M_4$ as before. Similar arguments work when one of the cases \ref{i:Blarge}--\ref{i:BmeetsN(A)} holds.

In fact, some possibilities are already excluded by our earlier assumptions, for example we can assume \ref{i:Alarge} and \ref{i:Blarge} do not simultaneously hold since we are assuming there are at most $c_6 s$ edges between $A$ and $B$.
\end{proof}

At this point we have all the necessary tools to prove the following theorem, which clearly implies Theorem \ref{t:expansionsimple1}.

\begin{theorem}\label{t:expansion}
Let $0 < \alpha \ll \epsilon$, let $p=\frac{1+\epsilon}{d}$ and let $L_1$ be the largest component of $Q^d_p$. Then there exists a constant $\beta>0$ such that whp:
\begin{enumerate}[(a)]
\item\label{i:small} every subset $S \subseteq V(L_1)$ satisfying $|S| \leq \frac{|V(L_1)|}{2}$ is such that \[
\left|N_{Q^d_p}(S)\right|  \geq \beta |S|\left(1 - \frac{\log_2 |S|}{d}\right)^2 d^{-3} \geq \beta d^{-5};
\]
\item\label{i:large} every subset $S \subseteq V(L_1)$ satisfying $ \alpha n \leq |S| \leq \frac{|V(L_1)|}{2}$ is such that
\[
\left|N_{Q^d_p}(S)\right|  \geq \beta |S|d^{-2} (\log d)^{-1}.
\]
\end{enumerate}
\end{theorem}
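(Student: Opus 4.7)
The plan is to follow the blueprint laid out in the text immediately preceding the statement. First, I exploit the standard sprinkling trick: write $p$ as $1-(1-q_1)(1-q_2)$ with $q_1 = (1+\delta_1)/d$ and $q_2 = \delta_2/d$ for some $\delta_2 \ll \delta_1 < \epsilon$, and set $Q_1 := Q^d_{q_1}$, $Q_2 := Q_1 \cup Q^d_{q_2} \sim Q^d_p$. Write $L_1'$ and $L_1$ for the respective giants; note $L_1' \subseteq L_1$. Reveal $Q_1$ first and condition on the whp properties supplied by Theorem~\ref{t:AKSfine} and Lemmas~\ref{l:secondlargest}, \ref{l:outsidegiant}, \ref{l:dense}, \ref{l:degrees} and \ref{l:excess} applied to $Q_1$: namely, $|V(L_1')| = (\gamma_1 \pm o(1))n$ with $\gamma_1$ as close to $\gamma$ as desired; every other component of $Q_1$ has order $O(d)$; every $v \in V(L_1')$ is adjacent in $Q_2$ to components of $L_1 - L_1'$ of total volume $O(d)$; every vertex of $Q^d$ lies within distance two of $\Omega(d^2)$ vertices of $L_1'$; and all but at most $nd^{-4}$ vertices of $L_1'$ have $Q_1$-degree at most $\log d$.

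Next, I pick a spanning tree $T$ of $L_1'$ and run Lemma~\ref{l:treedecomp} with $C_1 = d$, $C_2 = \log d$, $r = nd^{-4}$, and $\ell$ a small polynomial in $d$ (of order $d^2$). This yields a partition $\mathcal{C}$ of $V(L_1')$ into $|\mathcal{C}| \leq n/\ell$ connected pieces of size in $[\ell, d\ell]$, all but at most $nd^{-4}$ of which have size at most $\ell\log d$. The probabilistic heart of the argument is then a union bound: for each partition $\mathcal{C} = \mathcal{A} \sqcup \mathcal{B}$, set $A = \bigcup \mathcal{A}$, $B = \bigcup \mathcal{B}$, $t = \min(|A|,|B|)$ and invoke Lemma~\ref{l:familyofpaths} to produce, except with probability $\exp\bigl(-\Omega(t(1-\log_2 t/d))\bigr)$, a family of $\Omega(t(1-\log_2 t/d))$ vertex-disjoint $A$--$B$-paths of length at most $5$ in $Q^d_{q_2}$. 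Partitions with $k$ minority pieces number at most $\binom{|\mathcal{C}|}{k} \leq (en/(k\ell))^k$ and satisfy $t \geq k\ell$; a short calculation shows that $\ell$ of order $d^2$ comfortably dominates the combinatorial factor, so whp the path family exists for every partition of $\mathcal{C}$ simultaneously.

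Given $S \subseteq V(L_1)$ with $|S| \leq |V(L_1)|/2$, I decompose $S = S_1 \sqcup S_2 \sqcup S_3$ with $S_1 = S \setminus V(L_1')$, $S_2$ the vertices of $S$ in pieces of $\mathcal{C}$ meeting both $S$ and $S^c$, and $S_3$ the vertices of $S$ in pieces of $\mathcal{C}$ contained in $S$. One of the three has size at least $|S|/3$. If $|S_1|$ dominates, a greedy selection using $|C_v| = O(d)$ yields $\Omega(|S_1|/d)$ disjoint components of $L_1 - L_1'$ meeting $S$ with pairwise distinct attachment vertices in $L_1'$; each either places its attachment vertex in $S \cap V(L_1')$ or supplies a distinct $N(S)$-vertex, so either $|N(S)| \geq \Omega(|S|/d)$ directly, or else $S' := S \cap V(L_1')$ has size $\Omega(|S|/d)$ and we rerun the remaining cases on $S'$ (noting $N_{L_1'}(S') \subseteq N_{Q^d_p}(S)$). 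If $|S_2|$ dominates, each broken piece is connected in $Q_1$ and meets both $S$ and $S^c$, giving a distinct $N(S)$-vertex, so $|N(S)| \geq |S_2|/(d\ell)$. If $|S_3|$ dominates, I apply the partition path guarantee to $\mathcal{A} = \{C \in \mathcal{C}: C \subseteq S\}$, $\mathcal{B} = \mathcal{C} \setminus \mathcal{A}$; each of the $\Omega(t(1-\log_2 t/d))$ paths starts in $A \subseteq S$ and ends in $B = V(L_1') \setminus S_3$, and since $B \cap S = S_2$, all but at most $|S_2|$ of the paths must cross into $S^c$ and contribute a distinct $N(S)$-vertex.

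Combining the three case bounds and trading off $|S_2|/(d\ell)$ against $\Omega(t(1-\log_2 t/d)) - |S_2|$ with $\ell$ of order $d^2$ yields part~(a) with the $d^{-3}$ exponent (the floor $\beta d^{-5}$ coming from $1 - \log_2 |S|/d \geq 1/d$ for $|S| \leq n/2$). For part~(b), when $|S| \geq \alpha n$ with $\alpha \ll \epsilon$, the exceptional pieces contribute total volume at most $r \cdot d\ell = O(nd^{-4} \cdot d^3) = O(n/d) \ll |S|$, so effectively all relevant pieces have size $O(\ell\log d)$ rather than $O(d\ell)$, improving the case~2 denominator by the factor $d/\log d$ and delivering the claimed $d^{-2}(\log d)^{-1}$. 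The main technical obstacle I expect is in Case~1 (propagating the weak bound through the transfer to $S'$ while keeping track of the constants) and in the regime where $|S_3|$ is close to $|V(L_1')|/2$, where $t$ might drop; this is controlled by taking $\delta_1$ so close to $\epsilon$ that $|V(L_1')| = (1-o(1))|V(L_1)|$, ensuring $t = \min(|S_3|, |V(L_1')|-|S_3|)$ remains comparable to $|S|$.
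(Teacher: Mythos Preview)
Your blueprint matches the paper's proof closely in structure (sprinkling, tree-decomposition of $L_1'$, union bound over piece-partitions via Lemma~\ref{l:familyofpaths}, and the $S_1/S_2/S_3$ trichotomy), but there is a genuine quantitative gap: you commit to a \emph{single} decomposition $\mathcal{C}$ with piece size $\ell$ of order $d^2$, whereas the paper uses two adapted families. For part~(a) it runs Lemma~\ref{l:treedecomp} once \emph{for each} $s$, producing $\mathcal{C}(s)$ with pieces of size between $c_8^{-1}b(s)^{-1}d$ and $c_8^{-1}b(s)^{-1}d^2$ (where $b(s)=1-\log_2 s/d$); for part~(b) it uses a separate $\mathcal{C}'$ with pieces of size between $c_8^{-1}d$ and $c_8^{-1}d\log d$ (apart from $\leq nd^{-4}$ exceptional ones). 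The point is that the union bound only needs $\ell\gtrsim d/b(t)$ for the relevant range of $t$, so for small $s$ (where $b(s)\approx 1$) or for part~(b) (where one only needs $t\geq\alpha n/4$) one can afford pieces of order $d$, not $d^2$.

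This matters precisely in your $S_2$ case. Your bound there is $|N(S)|\geq |S_2|/(d\ell)\approx |S_2|/d^3$, while the paper's adapted pieces give $|S_2|\cdot c_8 b(s)/d^2$ in~(a) and $|S_2|\cdot c_8/(d\log d)$ in~(b). Tracing the balance through (especially after the $S_1$-transfer, where one only has $|S_2\cup S_3|\geq\Omega(s/d)$), your argument yields $|N(S)|\geq\Omega(sb(s)/d^4)$ for~(a) and $\Omega(s/(d^3\log d))$ for~(b), each a factor of roughly $d$ short of the stated $\beta sb(s)^2/d^3$ and $\beta s/(d^2\log d)$. In particular, your claim that part~(b) follows because ``all relevant pieces have size $O(\ell\log d)$'' overlooks that $\ell\log d\approx d^2\log d$, not $d\log d$: the $d/\log d$ improvement from $C_2=\log d$ is real, but it acts on a base of $d\ell=d^3$, not $d^2$. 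Your approach does recover the $\beta d^{-5}$ floor (since $sb(s)/d^4\geq d^{-5}$), so it suffices for Theorem~\ref{t:expansionsimple1}, but not for the sharper statement of Theorem~\ref{t:expansion} as written. The fix is exactly what the paper does: let the piece size vary with $s$ (and use a separate, finer decomposition for the large-$s$ regime of part~(b)).
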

\begin{proof}
Throughout the proof we will introduce a sequence of constants $c_1, c_2 ,c_3, \ldots$ under the assumption that each $c_i$ is sufficiently small in terms of the preceding $c_j$, $\epsilon$ and $\alpha$.

We will argue using a sprinkling argument. Let $c_2 \ll c_1 \ll \alpha$, let $q_2 = \frac{c_2}{d}$ and let $q_1 = \frac{p - q_2}{1-q_2}$. 
Note that, since $c_2 \ll \epsilon$ it follows that $q_1$ is still supercritical. Furthermore, if we let $\gamma$ and $\gamma_1$ be the survival probabilities of the ${\text{Po}(1+\epsilon)}$ and ${\text{Po}(dq_1)}$ branching processes, respectively, then since ${c_2 \ll c_1}$ we may assume that
\begin{equation}\label{e:survival}
\gamma - \gamma_1 \leq c_1.
\end{equation}
We will generate independently two random subgraphs $Q^d_{q_1}$ and $Q^d_{q_2}$ and let $Q_1 := Q^d_{q_1}$ and $Q_2 := Q_1 \cup Q^d_{q_2}$, so that $Q_2 \sim Q^d_p$.

Let us first note a few likely properties of the graphs $Q_1$ and $Q_2$. Firstly, it follows from Lemma \ref{l:degrees} that
\begin{equation}\label{e:degrees}
\text{whp $Q_1$ contains at most $nd^{-4}$ vertices of degree at least $\log d$.}
\end{equation}
Furthermore, if we let $L'_1$ and $L_1$ be the largest components in $Q_1$ and $Q_2$, respectively, then by Theorem \ref{t:AKSfine}
\begin{equation}\label{e:giantsize}
\text{whp $|V(L_1)| = (\gamma \pm c_3) n$ and $|V(L'_1)| = (\gamma_1 \pm c_3) n $.}
\end{equation} 
Note that \eqref{e:survival} and \eqref{e:giantsize} imply that $|V(L'_1)| \geq (\gamma - c_1 - c_3) n \geq \frac{3}{4} |V(L_1)|$.

Given a vertex $v \in V(L'_1)$, let $C_v$ be the set of vertices which are contained in some component of $L_1 - L'_1$ which is adjacent to $v$ in $Q_2$. Then, by Lemma \ref{l:outsidegiant} there exists a constant $K_2:=K_2(dq_1 - 1) >0$ such that
\begin{equation}\label{e:outsidegiant}
\text{whp $|C_v| \leq K_2 d$ for every vertex in $v \in V(L'_1)$,}
\end{equation} 
where we may assume that $K_2^{-1} := c_4 \ll c_3$. Note, in particular, that \eqref{e:outsidegiant} implies that every component in $L_1 - L'_1$ has size at most $K_2 d$.

Moreover, by Lemma \ref{l:dense},
\begin{equation}\label{e:dense}
\text{whp every vertex in $Q^d$ is at distance at most two from at least $c_5 d^2$ vertices in $L'_1$.}
\end{equation}
In what follows, we will assume that \eqref{e:survival}--\eqref{e:dense} hold.

We want to split $L'_1$ into a disjoint family of relatively small, connected pieces, however in order to treat the cases \ref{i:small} and \ref{i:large} we will need to use slightly different families. Given $s \in \mathbb{N}$, let us write 
\[
b(s):= 1 - \frac{\log_2 s}{d}.
\]
\begin{itemize}
\item For case \ref{i:small}, for each $1 \leq s \leq \frac{|V(L_1)|}{2}$ we use Lemma \ref{l:treedecomp} to split $L'_1$ into a family $\mathcal{C}(s)$ of vertex-disjoint connected subgraphs, which we will refer to as \emph{pieces}, such that all pieces have size between  $c_8^{-1}b(s)^{-1} d$ and $c_8^{-1}b(s)^{-1} d^{2}$.
\item Similarly, for case \ref{i:large} we use \eqref{e:degrees} and Lemma \ref{l:treedecomp} to split $L'_1$ into a family $\mathcal{C}'$ of connected pieces such that at most $nd^{-4}$ of the pieces in $\mathcal{C}'$ have size between $c_8^{-1} d$ and $c_8^{-1} d^{2}$ and the rest have size between $c_8^{-1} d$ and $c_8^{-1} d \log d$.
\end{itemize}

We now state two very similar claims about the existence of certain path families, whose proofs we defer to the end of this proof. The first will be useful for case \ref{i:small}.

\begin{claim}\label{c:pathsmall}
Whp for any $1 \leq t \leq 2s \leq |V(L_1)|$ and any partition of $\mathcal{C}(s)$ into two sets $\{\mathcal{C}_A, \mathcal{C}_B\}$, where $A := \bigcup \mathcal{C}_A$ and  $B := \bigcup \mathcal{C}_B$, with $ \min \{|A|,|B| \} =t$, there is a family of at least $c_7 tb(t)$ vertex-disjoint $A\textrm{-}B$-paths of length at most five in $Q^d_{q_2}$.
\end{claim}

The second, which follows by a similar argument, will be useful for case $\ref{i:large}$.

\begin{claim}\label{c:pathlarge}
Whp for any partition of $\mathcal{C}'$ into two sets $\{\mathcal{C}'_A, \mathcal{C}'_B\}$, where $A := \bigcup \mathcal{C}'_A$ and  ${B := \bigcup \mathcal{C}'_B}$, with $\min \{|A|,|B| \} = t \geq \frac{\alpha n}{4} $ there is a family of at least $c_7 td^{-1}$ vertex-disjoint $A\textrm{-}B$-paths of length at most five in $Q^d_{q_2}$.
\end{claim}

Let us further assume that $Q^d_{q_2}$ satisfies the conclusions of Claims \ref{c:pathsmall} and \ref{c:pathlarge}. We will subsequently be able to deduce the claimed expansion properties deterministically.

Let $S \subseteq V(L_1)$ be an arbitrary subset of size $s \leq \frac{|V(L_1)|}{2}$ and let $S_1 := S \cap V(R)$ be the vertices of $S$ which lie in $R:=L_1 - L'_1$.

In order to deal with case \ref{i:small} let us further split $S \cap V(L'_1)$ into two parts as follows:
\begin{itemize}
\item $S_2$ is the set of vertices which are contained in pieces $C \in \mathcal{C}(s)$ such that $S \cap C \neq \emptyset$ and $S  \setminus C \neq \emptyset$;
\item $S_3$ is the set of vertices which are contained in pieces $C \in \mathcal{C}(s)$ such that $C \subseteq S$.
\end{itemize}
Similarly, to deal with case \ref{i:large} we further split $S\cap V(L'_1)$ into two parts as follows:
\begin{itemize}
\item $S'_2$ is the set of vertices which are contained in pieces $C \in \mathcal{C}'$ such that $S \cap C \neq \emptyset$ and $S  \setminus C \neq \emptyset$;
\item $S'_3$ is the set of vertices which are contained in pieces $C \in \mathcal{C}'$ such that $C \subseteq S$.
\end{itemize}

{\bf Case \ref{i:small} :}

Suppose first that $|S_1| \geq \frac{s}{2}$. Then, by \eqref{e:outsidegiant} we can choose some subset $X \subseteq V(L'_1)$ of size at least $\frac{c_4 s}{2d}$
together with a disjoint family $\{C(x) \colon x \in X\}$ of components of $R$ such that $x$ is adjacent to $C(x)$ in $Q_2$ for each $x \in X$ and each $C(x)$ meets $S$.

For each $x \in X$ either $x \in S$, or there is some vertex in $C(x) \cup \{x\}$ which lies in the neighbourhood of $S$. In particular, it follows that either $\left|N_{Q_2}(S)\right|  \geq \frac{c_4 s}{4d}$ or 
\[
|S_2 \cup S_3| = |S \cap V(L'_1)| \geq \min\left\{\frac{s}{2}, \frac{c_4 s}{4d} \right\} = \frac{c_4 s}{4d}.
\]

Suppose then that $|S_2| \geq \frac{c_4c_7s b(s)}{16d}$. Since each piece in $\mathcal{C}(s)$ has size at most $c_8^{-1}b(s)^{-1} d^2$ it follows that $S_2$ contains vertices in at least $\frac{c_4c_7 c_8s b(s)^2}{16 d^3}$ many pieces of $\mathcal{C}$. Since each such piece contains at least one vertex in the neighbourhood of $S$, it follows that $\left|N_{Q_2}(S)\right|  \geq \frac{c_4c_7 c_8 s b(s)^2}{16 d^3}$.

On the other hand, if $|S_2 \cup S_3| \geq \frac{c_4 s}{4d}$ and $|S_2| \leq \frac{c_4c_7s b(s)}{16d}$ then $|S_3| \geq \frac{c_4 s}{8d}$. Let 
\[
\cc{C}_A = \{ C \in \mathcal{C} \colon C \subseteq S\} \qquad \text{ and } \qquad  \cc{C}_B = \mathcal{C} \setminus \cc{C}_A,
\]
and let $A := \bigcup \mathcal{C}_A$ and  $B := \bigcup \mathcal{C}_B$. Then $|A| = |S_3| \geq \frac{c_4 s}{8d}$, and by \eqref{e:survival} and \eqref{e:giantsize}
\[
|B| \geq |V(L'_1)| - |S| \geq |V(L'_1)| - \frac{|V(L_1)|}{2} \geq  \frac{|V(L_1)|}{4} \geq  \frac{c_4 s}{8d}.
\] 
Hence, by Claim \ref{c:pathsmall} there is family of $\frac{c_4c_7 s b\left(\frac{c_4 s}{8d}\right)}{8d} \geq\frac{c_4c_7 s b(s)}{8d} $ vertex-disjoint $A\textrm{-}B$-paths in $Q^d_{q_2}$. Since at most $|S_2| \leq \frac{c_4c_7 s b(s)}{16d}$ of these paths can meet $S_2$, the rest of these paths go from $A \subseteq S$ to $B \setminus S_2 \subseteq S^c$, and so each path contributes a vertex to the neighbourhood of $S$. It follows that $\left|N_{Q_2}(S)\right|  \geq \frac{c_4c_7 s b(s) }{16 d}$. In particular, in all cases 
\[\left|N_{Q_2}(S)\right|=\left|N_{Q_2}(S)\right|  \geq c_{9} s b(s)^2  d^{-3}.\] 

{\bf Case \ref{i:large} :}

We first note that, by \eqref{e:giantsize} and \eqref{e:survival}
\[
|S_1| \leq |V(R)| \leq |V(L_1)| - |V(L'_1)| \leq (\gamma + c_3 - \gamma_1 +c_3)n \leq (c_1 + 2c_3)n
\]
and hence $|S'_2 \cup S'_3| \geq |S| - |S_1| \geq \frac{\alpha n}{2}$.

If $|S'_2| \geq \frac{c_7 \alpha n}{8d}$ then, since at most $nd^{-4} c_8^{-1} d^2 =  c_8^{-1} n d^{-2}$ vertices lie in pieces of $\mathcal{C}'$ of size larger than $c_8^{-1} d \log d$, it follows that $S'_2$ contains vertices in at least $\frac{c_8 c_7 \alpha n}{16d^2 \log d}$ many pieces of $\mathcal{C}'$. Since each piece contains at least one vertex in the neighbourhood of $S$ it follows that $\left|N_{Q_2}(S)\right|  \geq \frac{c_8 c_7 \alpha n}{16d^2 \log d}$. 

On the other hand, if $|S'_2| \leq \frac{c_7 \alpha n}{8d}$ and $|S'_2 \cup S'_3| \geq \frac{\alpha n}{2}$, then $|S'_3| \geq \frac{\alpha n}{4}$. Let 
\[
\cc{C}_A = \{ C \in \mathcal{C}' \colon C \subseteq S\} \qquad \text{ and } \qquad  \cc{C}_B = \mathcal{C}' \setminus \cc{C}_A,
\]
and let $A := \bigcup \mathcal{C}'_A$ and  $B := \bigcup \mathcal{C}'_B$. Then $|A| = |S'_3| \geq \frac{\alpha n}{4}$ and by \eqref{e:giantsize}
\[
|B| \geq |V(L'_1)| - |S| \geq |V(L'_1)| - \frac{|V(L_1)|}{2} \geq  \frac{|V(L_1)|}{4} \geq  \frac{\alpha n}{4}.
\] 
Hence, by Claim \ref{c:pathlarge} there is a family of $\frac{c_7\alpha n}{4d}$ many vertex-disjoint $A\textrm{-}B$-paths in $Q^d_{q_2}$. Since at most $|S'_2| \leq \frac{c_7\alpha n}{8d}$ of these paths can meet $S'_2$, the rest of these paths go from $A \subseteq S$ to $B \setminus S'_2 \subseteq S^c$, and so each path contributes a vertex to the neighbourhood of $S$. It follows that ${\left|N_{Q_2}(S)\right|  \geq \frac{c_7\alpha n}{8d}}$. In particular, in both cases 
\[
\left|N_{Q_2}(S)\right| = \left|N_{Q_2}(S)\right| \geq c_9 n d^{-2} (\log d)^{-1}.
\]

The conclusion of the theorem then follows with $\beta = c_9$.

\begin{proof}[Proof of Claim \ref{c:pathsmall}]
Let us fix an $s$ and such a partition $\{\mathcal{C}_A,\mathcal{C}_B\}$, where $\min \{|A|,|B| \} =t \leq 2s$. Note that, since each piece in $\mathcal{C}(s)$ has size at least $c_8^{-1} b(s)^{-1}d$, it follows that 
\[
\text{$t \geq c_8^{-1} b(s)^{-1}d =:t_{\textrm{min}}$ and $s \geq \frac{t_{\textrm{min}}}{2} =: s_{\textrm{min}}$.}
\] 
Let us suppose that $\mathcal{C}_A$ contains $k$ pieces of $\mathcal{C}$, where 
\[
k_1 := c_8t b(s)d^{-2} \leq k \leq c_8 tb(s)  d^{-1} :=k_2.
\]
Note that there are at most
\[
\binom{|\mathcal{C}|}{k} \leq n^k
\]
partitions of this form. Since $A \cup B = V(L'_1)$, by Lemma \ref{l:familyofpaths} the probability that such a partition does not satisfy the conclusion of the claim is at most $\exp\left(-c_7 tb(t) \right)$, and so by the union bound the probability that some partition does not satisfy the conclusion of the lemma is at most
\begin{align*}
\sum_{s = s_{\textrm{min}}}^{\frac{|V(L_1)|}{2}} \sum_{t = t_{\textrm{min}}}^{2s} \sum_{k=k_1}^{k_2} \exp(-c_7 t b(t)) n^k &\leq \sum_{s = s_{\textrm{min}}}^{\frac{|V(L_1)|}{2}} \sum_{t = t_{\textrm{min}}}^{2s} (k_2-k_1)\exp(-c_7 tb(t)) n^{k_2}\\
&\leq \sum_{s = s_{\textrm{min}}}^{\frac{|V(L_1)|}{2}} \sum_{t = t_{\textrm{min}}}^{2s} c_8 tb(s) d^{-1}\exp(-c_7 tb(t)  ) n^{c_8tb(s)d^{-1}}\\
&\leq \sum_{s = s_{\textrm{min}}}^{\frac{|V(L_1)|}{2}}\sum_{t = t_{\textrm{min}}}^{2s} \exp\left(-\frac{c_7}{2} tb(t)\right) \\
&\leq \sum_{s = s_{\textrm{min}}}^{\frac{|V(L_1)|}{2}} 2 \exp\left(-\frac{c_7}{2} t_{\textrm{min}} b(t_{\textrm{min}})\right) = o(1),
\end{align*}
where in the above we used that, since $b(s) \leq 2 b(2s) \leq 2b(t)$,
\[
c_8tb(s)d^{-1}n^{c_8tb(s)d^{-1}} \leq \exp\left(c_8tb(s)\right) \leq \exp\left(\frac{c_7 tb(t) }{2} \right),
\]
and that $\frac{c_7}{2}t_{\textrm{min}} b(t_{\textrm{min}}) \geq  \frac{c_7 t_{\textrm{min}}}{8} \geq d$.
\end{proof}

\begin{proof}[Proof of Claim \ref{c:pathlarge}]
Let us fix such a partition $\left\{\mathcal{C}'_A,\mathcal{C}'_B\right\}$ where $\min \{|A|,|B| \} =t \geq \frac{\alpha n}{4}$. Since there are at most $c_8nd^{-1}$ pieces in $\mathcal{C}'$, there are at most
\[
2^{c_8nd^{-1}}
\]
many partitions of $\mathcal{C}'$. Since $A \cup B = V(L'_1)$ and $1- \frac{\log_2 t}{d} \geq d^{-1}$, by Lemma \ref{l:familyofpaths} the probability that such a partition does not satisfy the conclusion of the claim is at most $\exp\left(-c_7 t d^{-1}\right)$, and so by the union bound the probability that any partition does not satisfy the conclusion of the lemma is at most
\begin{align*}
\sum_{t \geq \frac{\alpha n}{4}}  \exp\left(-c_7 td^{-1}\right) 2^{c_8nd^{-1}} &\leq \sum_{t \geq \frac{\alpha n}{4}}  \exp\left(-\frac{c_7 td^{-1}}{2}\right) = o(1).
\end{align*}
\end{proof}
\end{proof}
\begin{remark}\label{r:biggerthan1/2}
The restriction to subsets of size at most $\frac{|V(L_1)|}{2}$ in Theorem \ref{t:expansion} is mostly for ease of presentation. It is relatively simple to see that subsets of $V(L_1)$ of size up to $(1-2\alpha)|V(L_1)|$ will have similar expansion properties.

Indeed, let $S$ be any subset of $V(L_1)$ such that $\frac{|V(L_1)|}{2} \leq |S| \leq (1-2\alpha)|V(L_1)|$ and whose boundary satisfies ${|N(S)| \leq \beta|S|d^{-2}(\log d)^{-1} \leq \alpha n}$. It follows that $W:= V(L_1) \setminus (S \cup N(S))$ is such that $\alpha n \leq |W| \leq \frac{|V(L_1)|}{2}$ and $N(W) \subseteq N(S)$. 

In particular, under the conclusion of Theorem \ref{t:expansion} \ref{i:large},
\begin{equation}\label{e:bdrylargeset}
|N(S)| \geq |N(W)| \geq \beta |W|d^{-2}(\log d)^{-1} \geq \frac{\alpha \beta}{1- 2\alpha} |S| d^{-2}(\log d)^{-1} \geq \beta |S| d^{-5}.
\end{equation}
\end{remark}

\begin{proof}[Proof of Theorem \ref{t:expansionsimple1}]
The claim follows immediately from Theorem \ref{t:expansion} \ref{i:small}.
\end{proof}

We note that it is relatively easy to see with a similar argument that Claims \ref{c:pathsmall}--\ref{c:pathlarge} imply that whp the subset $V(L'_1)\subseteq V(L_1)$ has good expansion properties, in fact slightly better than the expansion we have for the whole giant component $L_1$, although this expansion may happen `outside' of $V(L'_1)$.

\begin{lemma}\label{l:L'_1}
Let $\delta,\epsilon > 0$, let $q_1 = \frac{1+\epsilon}{d}$ and let $q_2 = \frac{\delta}{d}$. Let $L'_1$ be the largest component of $Q^d_{q_1}$ and let $Q_2 := Q^d_{q_1} \cup Q^d_{q_2}$. Then whp every subset $S \subseteq V(L'_1)$ of size $|S| \leq \frac{|V(L'_1)|}{2}$ is such that 
\[
\left|N^5_{Q_2}(S) \cap V(L'_1)\right| \geq \beta |S| \left( 1 - \frac{\log_2 |S|}{d}\right)^2 d^{-2}.
\]
\end{lemma}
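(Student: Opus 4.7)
The plan is to mirror the argument for case \ref{i:small} of Theorem \ref{t:expansion}, but restricted to subsets of the giant $L'_1$ of $Q^d_{q_1}$ rather than the giant $L_1$ of $Q_2$. The key observation is that because $S \subseteq V(L'_1)$, there are no vertices of $S$ lying in small components of $Q^d_{q_1}$, so the contribution $S_1$ handled via Lemma \ref{l:outsidegiant} in the proof of Theorem \ref{t:expansion} disappears. This saves one factor of $d$ in the final bound, giving $\beta |S| b(|S|)^2 d^{-2}$ instead of $\beta |S| b(|S|)^2 d^{-3}$, where $b(s) := 1 - \log_2 s / d$. The use of the $5$-th neighbourhood $N^5_{Q_2}$ (rather than just $N_{Q_2}$) is natural since the paths produced by Claim \ref{c:pathsmall} have length up to five.

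First I would expose $Q^d_{q_1}$ and then independently $Q^d_{q_2}$, and for each $s$ with $1 \le s \le |V(L_1)|/2$ invoke Lemma \ref{l:treedecomp} on a spanning tree of $L'_1$ (after first extracting the $O(nd^{-4})$ high-degree vertices via Lemma \ref{l:degrees}, exactly as in the construction of $\mathcal{C}(s)$) to obtain a partition of $V(L'_1)$ into connected pieces of size between $c_8^{-1}b(s)^{-1}d$ and $c_8^{-1}b(s)^{-1}d^2$. Claim \ref{c:pathsmall}, whose proof only uses properties of this decomposition together with Lemma \ref{l:familyofpaths} applied inside $L'_1$, then applies verbatim: whp, for every such $s$ and every bipartition $\{\mathcal{C}_A, \mathcal{C}_B\}$ of $\mathcal{C}(s)$ with $\min\{|A|,|B|\}=t$, there are at least $c_7 t b(t)$ vertex-disjoint $A$-$B$-paths of length at most five in $Q^d_{q_2}$.

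Now fix $S \subseteq V(L'_1)$ with $s:=|S| \le |V(L'_1)|/2$, and split it using $\mathcal{C}(s)$ into $S_2$ (vertices in pieces of $\mathcal{C}(s)$ meeting both $S$ and $V(L'_1)\setminus S$) and $S_3$ (vertices in pieces contained in $S$). If $|S_2| \ge c \cdot s b(s)$ for a small constant $c$, then since each piece has size at most $c_8^{-1} b(s)^{-1} d^2$, $S_2$ touches at least $c c_8 s b(s)^2 / d^2$ pieces of $\mathcal{C}(s)$; each such piece is connected and meets $V(L'_1)\setminus S$, so it contributes a vertex to $N_{Q_2}(S)\cap V(L'_1) \subseteq N^5_{Q_2}(S) \cap V(L'_1)$. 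Otherwise $|S_3| \ge s/2$, and applying Claim \ref{c:pathsmall} to the partition $\mathcal{C}_A:=\{C\in\mathcal{C}(s):C\subseteq S\}$, $\mathcal{C}_B:=\mathcal{C}(s)\setminus \mathcal{C}_A$, with $|A|=|S_3| \ge s/2$ and $|B|\ge |V(L'_1)|-s \ge s/2$, yields at least $c_7 s b(s)/2$ vertex-disjoint paths in $Q^d_{q_2}$ of length at most five. At most $|S_2| < c s b(s)$ of these can end in $S_2$, so provided $c \le c_7/4$, at least $c_7 s b(s)/4$ of them end in $V(L'_1) \setminus S$, each contributing a distinct vertex to $N^5_{Q_2}(S) \cap V(L'_1)$. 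In either case the required bound $\beta s b(s)^2 d^{-2}$ holds with room to spare.

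The main obstacle is the standard bookkeeping already present in the proof of Claim \ref{c:pathsmall}, namely ensuring that the union bound over $s$, over bipartitions of $\mathcal{C}(s)$, and over the minimum part size $t$ still converges simultaneously for all relevant $s$. Since we reuse Claim \ref{c:pathsmall} unchanged, no new probabilistic estimate is needed; the only subtlety is verifying that the decomposition step and the sprinkling step can be run simultaneously for every $s \le |V(L'_1)|/2$ on the same realisation of $Q^d_{q_1}$ and $Q^d_{q_2}$, which is done exactly as in the proof of Theorem \ref{t:expansion}.
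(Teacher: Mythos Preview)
Your proposal is correct and follows essentially the same approach as the paper's own sketch: drop the $S_1$ part (since $S\subseteq V(L'_1)$), split $S=S_2\cup S_3$ via the decomposition $\mathcal{C}(s)$, handle large $|S_2|$ by counting pieces, and otherwise apply Claim~\ref{c:pathsmall} to the partition $\{\mathcal{C}_A,\mathcal{C}_B\}$ to produce many vertex-disjoint $A\textrm{-}B$-paths of length at most five whose endpoints land in $V(L'_1)\setminus S$. One minor inaccuracy: the extraction of high-degree vertices via Lemma~\ref{l:degrees} is used in the paper only for $\mathcal{C}'$ (case~\ref{i:large}), not for $\mathcal{C}(s)$, where the upper bound $c_8^{-1}b(s)^{-1}d^2$ on piece sizes comes directly from $\Delta(Q^d)=d$ in Lemma~\ref{l:treedecomp}; this does not affect your argument.
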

\begin{remark}
As with Theorem \ref{t:expansion}, there will be somewhat stronger expansion if we additionally assume that $|S| \geq \alpha n$.
\end{remark}
\begin{proof}[Proof of Lemma \ref{l:L'_1}]
We sketch the argument below, without keeping careful track of the constants. 

Given a subset $S \subseteq V(L'_1)$, we can consider the partition $S = S_2 \cup S_3$ as in the proof of case \ref{i:small} of Theorem \ref{t:expansion}. Each piece $C \in \mathcal{C}(s)$ containing a vertex of $S_2$ also contains a vertex in $N_{Q_2}(S) \cap V(L'_1)$, and so, since each piece in $\mathcal{C}(s)$ has size $O(b(s)^{-1}d^2)$, we are done if ${|S_2| = \Omega\left(sb(s) \right)}$. Hence, we may assume that $|S_2| = o\left(sb(s) \right)$ and so $|S_3| = \Omega(s)$. 

If we let $\mathcal{C}_A$ be the set of pieces of $\mathcal{C}(s)$ contained in $S$ and $\mathcal{C}_B$ be the rest, where $A:= \bigcup \mathcal{C}_A$ and $B:= \bigcup \mathcal{C}_B$, then, as before,  $|A| = |S_3| = \Omega(s)$ and $|B| = \Omega(s)$ and so by Claim \ref{c:pathsmall} whp there is a family of $\Omega\left(sb(s)\right)$ vertex-disjoint $A\textrm{-}B$-paths of length at most five in $Q^d_{q_2}$. Since at most $|S_2| = o\left(sb(s) \right)$ of them have an endpoint in $S'_2$, there are $\Omega\left(sb(s)\right)$ of them with an endpoint in $V(L'_1) \setminus S$. In particular, in both cases
\[
\left|N^5_{Q_2}(S) \cap V(L'_1)\right| = \Omega\left(sb(s)^2  d^{-2}\right).
\]
\end{proof}

In a supercritical random graph $G(d+1,p)$ with $p=\frac{1+\epsilon}{d}$ the giant component itself will likely not be an $\alpha$-expander for any constant $\alpha>0$. Indeed, let the \emph{$2$-core} of a graph $G$ be the maximal subgraph of $G$ of minimum degree at least two. Standard results imply that it is likely that there are logarithmically sized pendant trees attached to the $2$-core of $G(d+1,p)$, and also that that the $2$-core of $G(d+1,p)$ typically contains logarithmically long bare paths, both of which lead to logarithmically large sets whose neighbourhoods have constant size.

However, the results of Benjamini, Kozma and Wormald \cite{BKW14} and Krivelevich \cite{K18} imply that whp the giant component contains a linear sized subgraph which is an $\alpha$-expander. In the case of $Q^d_p$ is it a simple consequence of Theorem \ref{t:expansion}, using some ideas of Krivelevich \cite{K19}, that we can also pass to a linear sized subset of the giant component with a significantly better expansion ratio than that guaranteed for the whole of $L_1$ by Theorem \ref{t:expansion}.

\begin{proof}[Proof of Theorem \ref{t:expansionsimple2}]
Let $\alpha' \ll \epsilon$ and let us assume that the conclusion of Theorem \ref{t:expansion} and \eqref{e:bdrylargeset} holds with this $\alpha'$ for some constant $\beta'$. Note that, by Theorem \ref{t:AKSfine} there is some constant $\gamma = \gamma(\epsilon)$ such that whp $|V(L_1)| = (1+o(1))\gamma n$. By \eqref{e:bdrylargeset} whp every set $S \subseteq V(L_1)$ such that ${\alpha' n \leq |S| \leq (1-2\alpha')|V(L_1)|}$ is such that
\[
|N_{L_1}(S)| \geq \frac{\alpha' \beta'}{1-2\alpha'} |S| d^{-2} (\log d)^{-1} =:t|S|.
\]

Let $U \subseteq V(L_1)$ be a set of maximal cardinality such $|U| < \alpha' n$ and $|N_{L_1}(U)| < t|X|$ and let $H := L_1 -X$. Note that $|V(H)| \geq (1- \frac{2\alpha'}{\gamma})|V(L_1)|$. Suppose that there is some subset ${W \subseteq V(H)}$ with $|W| \leq \frac{|V(H)|}{2}$ and $|N_H(W)| < t|W|$. Then, in particular, \[
|N_{L_1}(U \cup W)| \leq |N_{L_1}(U)| + |N_H(W)| \leq  t|U \cup W|.\]

By construction, ${|U \cup W| \leq \alpha' n + \frac{|V(H)|}{2} \leq (1-2\alpha') |V(L_1)|}$, since $|V(L_1)|= (1+o(1))\gamma n$ and $\alpha' \ll \epsilon$. However, by maximality of $U$ it follows that $|U \cup W| \geq \alpha' n$. Hence $U \cup W$ contradicts our assumption on the expansion of $L_1$.

It follows that $H$ is a $t$-expander, and so the conclusion of the theorem follows with $\alpha = \frac{2\alpha'}{\gamma}$ and ${\beta = \frac{\alpha' \beta'}{1-2\alpha'}}$.
\end{proof}

\section{Consequences of expansion in the giant component}\label{s:consequences}
\subsection{Mixing time of the lazy random walk}\label{s:mixing} Given a graph ${G=(V,E)}$, the \emph{lazy simple random walk} on $G$ is a random walk on $V$ which remains at the same vertex with probability $\frac{1}{2}$ in each time step, and otherwise moves to a uniformly chosen random neighbour of its current position. The \emph{stationary distribution} $\pi$ of the lazy random walk is given by $\pi(v) = \frac{d_G(v)}{2|E|}$ for each $v \in V$. For a subset $S \subseteq V$ let us write $\pi(S) = \sum_{v\in S} \pi(v)$ and let
\[
\pi_{\textrm{min}} = \min \{ \pi(v) \colon v \in V \}.
\]
 The \emph{edge measure} $Q$ of the random walk is given by
\[
Q(x,y) := \pi(x) P(x,y) \qquad \text{ and } \qquad Q(A,B) = \sum_{x \in A, y \in B} Q(x,y),
\]
where $P$ is the transition matrix of the lazy random walk, so that
\[
Q(x,y)= \begin{cases}
\frac{1}{4|E|}  &\mbox{if } xy \in E;\\
0  &\mbox{otherwise }. 
\end{cases}
\]
The \emph{bottleneck ratio} of a subset $S \subseteq V(G)$ is defined to be 
\[
\Phi(S) = \frac{Q(S,S^c)}{\pi(S)} = \frac{e_G(S,S^c)}{2d_G(S)},
\]
where $d_G(S) = \sum_{v \in S} d_G(v)$ is the \emph{total degree} of $S$. The bottleneck ratio of the random walk, sometimes known as the \emph{Cheeger constant} of $G$, is given by
\[
\Phi(G) := \min_{S \colon \pi(S) \leq \frac{1}{2}} \Phi(S).
\]
Note that, for a $k$-regular graph $G$, $\Phi(G) \geq \alpha$ is equivalent to $G$ being an $f(\alpha,k)$-edge-expander for some function $f(\alpha,k)$.

Let $P^t(v,\cdot)$ denote the distribution on $V$ given by starting the lazy random walk at $v \in V$ and running for $t$ steps, and let us define
\[
d(t) := \max_{v \in V} d_{TV}\left( P^t(v,\cdot), \pi \right)
\]
to be the maximal distance (over $v \in V$) between $P^t(v,\cdot)$ and the stationary distribution $\pi$, where we measure this distance in terms of the \emph{total variation distance}. That is, given two random variables $X$ and $Y$ distributed on the same finite set $Z$ we have
\[
d_{TV}(X,Y) = \frac{1}{2}\sum_{z\in Z} \big| \mathbb{P}(X=z) - \mathbb{P}(Y=z)\big|.
\]

The \emph{mixing time} of the lazy random walk is then defined as
\[
t_{\textrm{mix}} := \min \left\{ t \colon d(t) \leq \frac{1}{4} \right\}.
\]
See \cite{LPW17} for more background on mixing time of Markov chains.

It is relatively easy to use our result on the vertex-expansion of the giant component $L_1$ of $Q^d_p$ to show that the likely value of the Cheeger constant of $L_1$ is $\Omega\left(d^{-6}\right)$. Again, with a more careful argument we can improve this result somewhat.

\begin{lemma}\label{l:Cheeger}
Let $\epsilon >0$, let $p=\frac{1+\epsilon}{d}$ and let $L_1$ be the largest component in $Q^d_p$. Then whp $\Phi(L_1) = \Omega\left(d^{-5} \right) $.
\end{lemma}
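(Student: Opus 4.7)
My plan is to derive the bound by combining the vertex-expansion statement of Theorem \ref{t:expansion} with a careful estimate of the total degree $d_{L_1}(S)$. The starting observation is that $e_{L_1}(S,S^c) \geq |N_{L_1}(S)|$, since each boundary vertex contributes at least one cut edge, so it suffices to upper bound $d_{L_1}(S)$ in a way that matches the lower bound $|N(S)| \geq \beta|S|d^{-5}$ from Theorem \ref{t:expansion}(a).

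First I would collect the ambient whp events: $|E(L_1)| \leq |E(Q^d_p)| = O(n)$ by a Chernoff bound; at most $nd^{-4}$ vertices of $L_1$ have degree exceeding $\log d$ by Lemma \ref{l:degrees}; and the expansion conclusions of Theorem \ref{t:expansion} together with Remark \ref{r:biggerthan1/2} hold. These give three complementary upper bounds on $d_{L_1}(S)$: the trivial $d|S|$; the global $2|E(L_1)| = O(n)$; and, by separating low-degree ($<\log d$) from high-degree vertices, $(\log d)|S| + nd^{-3}$.

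I would then fix $S \subseteq V(L_1)$ with $\pi(S) \leq \frac{1}{2}$ and argue by cases on $|S|$. For $|S|$ small, say $|S| \leq 2^{d/2}$, the factor $(1-\log_2|S|/d)^2 \geq 1/4$ in the refined bound of Theorem \ref{t:expansion}(a) combined with $d_{L_1}(S) \leq d|S|$ already yields $\Phi(S) = \Omega(d^{-4})$. For $|S| \geq \alpha n$ with $\alpha$ a small fixed constant, the stronger bound (b) together with $d_{L_1}(S) \leq 2|E(L_1)| = O(|S|/\alpha)$ yields $\Phi(S) = \Omega(d^{-2}(\log d)^{-1})$. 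In the intermediate range I would use the degree-splitting estimate $d_{L_1}(S) \leq 2(\log d)|S|$ (valid once $|S| \geq nd^{-3}/\log d$) together with the refined expansion of (a); the resulting lower bound $\Phi(S) \geq \beta(1-\log_2|S|/d)^2/(4d^3 \log d)$ is of order $\Omega(d^{-5})$ throughout. Finally, if $|S| > |V(L_1)|/2$ but still $\pi(S) \leq \frac{1}{2}$, one applies the same analysis to $S^c$ (using $e(S,S^c) \geq |N(S^c)|$ and Remark \ref{r:biggerthan1/2}) and exploits the $\pi$-constraint $d_{L_1}(S) \leq |E(L_1)| = O(n)$ in the denominator, which for $|S^c|$ bounded below by $\Omega(n/d)$ (forced by $d(S^c) > d(S)$) still yields the required bound.

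The main obstacle I anticipate is uniformly covering the full range of $|S|$: the naive combination of $|N(S)| \geq \beta|S|d^{-5}$ with $d_{L_1}(S) \leq d|S|$ gives only $\Omega(d^{-6})$, and the improvement to $\Omega(d^{-5})$ requires playing off the refined $(1-\log_2|S|/d)^2$ factor in Theorem \ref{t:expansion}(a) against the $(\log d)|S|$ part of the degree-splitting bound, and then gluing together the regimes where each of the three degree bounds is dominant. This is presumably the ``more careful argument'' alluded to just before the lemma.
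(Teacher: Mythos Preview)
Your plan does not quite reach $\Omega(d^{-5})$. The claim that $\Phi(S) \geq \beta(1-\log_2|S|/d)^2/(4d^3\log d)$ is $\Omega(d^{-5})$ ``throughout'' the intermediate range is false: take $|S|=n/\log d$, so that $1-\log_2|S|/d = \log_2\log d/d$, and your best available degree bound (whether the global $O(n)$ or the degree-splitting $(\log d)|S|+nd^{-3}$) is $\Theta(n)=\Theta((\log d)|S|)$. This yields only
\[
\Phi(S) \;\geq\; \frac{\beta\,(\log_2\log d)^2}{4d^5\log d} \;=\; o\!\left(d^{-5}\right).
\]
More generally, for arbitrary $S$ your three degree bounds can never do better than $d_{L_1}(S)=O((\log d)|S|)$ in the range $|S|\in[n/d,\alpha n]$, and there the refined expansion factor $(1-\log_2|S|/d)^2$ is only $O(d^{-2})$; the product falls short of $d^{-5}$ by a logarithmic factor. (A similar shortfall hits your final case: when $|S|>(1-2\alpha)|V(L_1)|$ and $\pi(S)\leq\tfrac12$, the complement $S^c$ can have size as small as $\Theta(n/d)$, and feeding that into Theorem~\ref{t:expansion}(a) with denominator $O(n)$ gives only $\Omega((\log d)^2 d^{-6})$.)

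The paper's ``more careful argument'' is not the interplay you describe but rather two different ideas. First, one reduces to \emph{connected} $S$: writing $C_1,\dots,C_t$ for the components of $L_1[S]$, one has $\Phi(S)\geq\min_i\Phi(C_i)$. Second, for connected $S$ with $|S|\geq d$, Lemma~\ref{l:excess} gives $e_{Q^d_p}(S)\leq C|S|$ whp, so $d_{L_1}(S)\leq 2C|S|+e_{L_1}(S,S^c)$; hence either $d_{L_1}(S)\leq 4C|S|$ (a genuine constant-factor bound, not $\log d$) and the crude $|N(S)|\geq\beta|S|d^{-5}$ already suffices, or else $e_{L_1}(S,S^c)\geq d_{L_1}(S)/2$ and $\Phi(S)\geq\tfrac14$. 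The remaining boundary cases are handled directly: $|S|\leq d$ is trivial, and the paper shows by a separate union-bound argument that whp no set with $|S|\geq(1-2\alpha)|V(L_1)|$ can have $\pi(S)\leq\tfrac12$.
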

\begin{proof}
Let $\alpha \ll \epsilon$ be a positive constant and let $\gamma$ be the survival probability of the ${\text{Po}(1+\epsilon)}$ branching process. Note that, since $L_1$ is connected, by Theorem \ref{t:AKSfine} whp \[
d_{L_1}(V(L_1)) \geq 2 (|V(L_1)|-1) \geq (2+o(1))\gamma n.
\] 
Let us further assume that the conclusions of Theorem \ref{t:expansion} and \eqref{e:bdrylargeset} hold with this $\alpha$ for some constant $\beta$. Note that, for every subset $S \subseteq V(L_1)$ we have that
\[
e_{Q^d_p}(S) = e_{L_1}(S) \qquad \text{ and } \qquad e_{Q^d_p}(S,S^c) = e_{L_1}(S,S^c).
\]

We first note that it is unlikely that any very large subset of $V(L_1)$ has small total degree. Indeed, suppose $S \subseteq V(L_1)$ is such that $|S| \geq (1-2\alpha) |V(L_1)|$ and $d_{L_1}(S) \leq \frac{d_{L_1}(V(L_1))}{2}$. Then it follows that $S^c$ is such that $|S^c| \leq 2\alpha |V(L_1)|\leq 2\alpha n$ and $d_{L_1}(S^c) \geq \frac{d_{L_1}(V(L_1))}{2} \geq (1+o(1)) \gamma n$.

However, $d_{Q^d}(X) = d|X| = e_{Q^d}(X,X^c) + 2e_{Q^d}(X)$ for any $X \subseteq V(Q^d)$. Hence, if $|X| \leq 2\alpha n$, then $\max \{ e_{Q^d}(X,X^c), 2e_{Q^d}(X) \} \leq 2\alpha d n$ and so $e_{Q^d_p}(X,X^c)$ and $e_{Q^d_p}(X)$ are both stochastically dominated by Bin$\left(2\alpha d n,p\right)$. Therefore, it follows by Lemma \ref{l:Chernoff} \ref{i:chernoff2} that
\[
\mathbb{P}\left( d_{Q^d_p}(X) \geq \frac{\gamma}{2} n \right) \leq 2 \left(\frac{4e\alpha(1+\epsilon)}{\gamma} \right)^{\frac{\gamma n}{2}}.
\]
Hence, by the union bound, the probability that there exists any set $S$ as above is at most 
\[
2\binom{n}{2\alpha n} \left(\frac{4e\alpha(1+\epsilon)}{\gamma} \right)^{\frac{\gamma n}{2}} \leq 2\left( \frac{e}{2\alpha}\right)^{\alpha n} \left(\frac{4e\alpha(1+\epsilon)}{\gamma} \right)^{\frac{\gamma n}{2}} = o(1),
\]
since $\alpha \ll \epsilon$.

In particular, whp
\begin{align*}
\Phi(L_1) &= \min \left\{ \Phi(S) \colon S \subseteq V(L_1) \text{ and }  d_{L_1}(S) \leq \frac{d_{L_1}(V(L_1))}{2}\right\} \\
&\geq \min \left\{ \Phi(S) \colon S \subseteq V(L_1) \text{ and } |S| \leq(1-2\alpha)n
\right\}.
\end{align*}

Next, we note that it is sufficient to consider the bottleneck ratio of connected subsets of $V(L_1)$. Indeed, let $S$ be an arbitrary subset of $V(L_1)$ with $|S| \leq (1-2\alpha) |V(L_1)|$ and let $C_1,C_2, \ldots, C_t$ be the connected components of $L_1[S]$. Then, for each $i$, $e_{L_1}(C_i, C_i^c) = e_{L_1}(C_i, S^c)$ and hence
\[
e_{L_1}(S,S^c) = \sum_{i=1}^t e_{L_1}(C_i, C_i^c) \qquad \text{ and } \qquad d_{L_1}(S) = \sum_{i=1}^t d_{L_1}(C_i).
\]
It follows that
\[
\Phi(S) = \frac{e_{L_1}(S,S^c)}{2d_{L_1}(S)} = \frac{\sum_{i=1}^t e_{L_1}(C_i, C_i^c)}{2\sum_{i=1}^t d_{L_1}(C_i)} \geq \min_i \left\{ \frac{e_{L_1}(C_i, C_i^c)}{ 2d_{L_1}(C_i)} \right\} = \min_i \{ \Phi(C_i) \}.
\]

Finally, let us bound the bottleneck ratio of connected subsets of $V(L_1)$. We note that, by Lemma \ref{l:excess}, there is a constant $C$ such that whp every subset $S \subseteq V(Q^d)$ which is connected in $Q^d_p$ satisfies $|S| \leq d$, or $e_{Q^d_p}(S) \leq C|S|$. Let $S$ be a connected subset of $V(L_1)$ with ${|S| \leq (1-2\alpha) |V(L_1)|}$.

Suppose first that $|S| \leq d$. Then, since $L_1$ is connected, $e_{Q^d_p}(S,S^c)\geq 1$ and hence
\[
\Phi(S) = \frac{e_{L_1}(S,S^c)}{2d_{L_1}(S)} \geq \frac{1}{2d|S|} = \Omega\left(d^{-2}\right).
\]

Suppose then that $d \leq |S| \leq (1-2\alpha)|V(L_1)|$, but $d_{Q^d_p}(S) \leq 4C|S|$. By Theorem \ref{t:expansion} and \eqref{e:bdrylargeset}, we have that $e_{Q^d_p}(S,S^c)\geq \beta  |S|d^{-5}$ and hence
\[
\Phi(S) = \frac{e_{L_1}(S,S^c)}{2d_{L_1}(S)} \geq \frac{ \beta  |S|d^{-5}}{8C |S|} = \Omega\left(d^{-5}\right).
\]

Finally, if $d \leq |S| \leq (1-2\alpha)|V(L_1)|$, and hence $e_{Q^d_p}(S) \leq C|S|$, and $d_{Q^d_p}(S) \geq 4C|S|$, then it follows that 
\[
e_{Q^d_p}(S,S^c) =d_{Q^d_p}(S) - 2e_{Q^d_p}(S) \geq \frac{ d_{Q^d_p}(S)}{2}
\]
and hence
\[
\Phi(S) = \frac{e_{L_1}(S,S^c)}{2d_{L_1}(S)} \geq \frac{1}{4}.
\]

Hence, we can conclude that 
\[
\Phi(L_1) = \min \{ \Phi(S) \colon |S| \leq (1-2\alpha)|V(L_1) \text{ and } S \text{ connected} \} = \Omega \left( d^{-5} \right).
\]
\end{proof}

We can relate the mixing time of the lazy random walk on $L_1$ to its Cheeger constant using the following theorem from Levin, Peres and Wilmer \cite{LPW17}. This theorem is a consequence of an important theorem of Sinclair and Jerrum \cite{SJ89} and of Lawler and Sokal \cite{LS88}, which bounds the Cheeger constant in terms of the spectral gap.

\begin{theorem}[{\cite[Theorem 17.10]{LPW17}}]\label{t:mix}
The mixing time of a lazy random walk on a graph $G$ satisfies the inequality
\[
t_{\textrm{mix}} \leq \frac{2}{\Phi(G)^2} \log \left( \frac{4}{\pi_{\textrm{min}}} \right).
\]
\end{theorem}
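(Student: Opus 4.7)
The plan is to decompose the stated bound into two classical ingredients implicit in the references to Sinclair--Jerrum and Lawler--Sokal: the discrete Cheeger inequality $\gamma \geq \tfrac{1}{2}\Phi(G)^2$ for the spectral gap $\gamma$ of the lazy walk, and a spectral upper bound on total variation distance of the form $d_{TV}(P^t(x,\cdot),\pi)^2\leq e^{-2\gamma t}/(4\pi_{\textrm{min}})$. Combining these and choosing $t$ so the right-hand side is $\leq 1/16$ yields $d_{TV}\leq 1/4$ and hence the conclusion.

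For the setup, since $Q(x,y)=Q(y,x)=\tfrac{1}{4|E|}$ on edges, the transition matrix $P$ is reversible on $L^2(\pi)$, hence self-adjoint with real spectrum $1=\lambda_1\geq\lambda_2\geq\ldots\geq\lambda_N$. Laziness writes $P=\tfrac{1}{2}(I+P')$ for the simple-random-walk matrix $P'$, so each $\lambda_i\geq 0$; in particular $\gamma:=1-\lambda_2$ equals the absolute spectral gap. By Courant--Fischer, $\gamma=\inf_f \mathcal{E}(f,f)/\mathrm{Var}_\pi(f)$ where $\mathcal{E}(f,f)=\tfrac{1}{2}\sum_{x,y}(f(x)-f(y))^2 Q(x,y)$ is the Dirichlet form.

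For the Cheeger step, replacing $f$ by $f-c$ for a $\pi$-median $c$ and splitting into positive and negative parts reduces the variational formula to showing that every $f\geq 0$ with $\pi(\mathrm{supp}\,f)\leq\tfrac{1}{2}$ satisfies $\mathcal{E}(f,f)\geq\tfrac{1}{2}\Phi^2\sum_x f(x)^2\pi(x)$. The level sets $S_t=\{f^2>t\}$ then have $\pi(S_t)\leq\tfrac{1}{2}$, so by definition of $\Phi$ we have $Q(S_t,S_t^c)\geq\Phi\,\pi(S_t)$. The coarea identity $|f(x)^2-f(y)^2|=\int_0^\infty |\mathbf{1}_{S_t}(x)-\mathbf{1}_{S_t}(y)|\,dt$ integrated against $Q$ gives
\[
\sum_{x,y}|f^2(x)-f^2(y)|\,Q(x,y)=2\int_0^\infty Q(S_t,S_t^c)\,dt\geq 2\Phi\sum_x f(x)^2\pi(x).
\]
Factoring $|f^2(x)-f^2(y)|=|f(x)-f(y)|\cdot(f(x)+f(y))$ and applying Cauchy--Schwarz in the symmetric measure $Q$ bounds the same sum above by $\sqrt{2\mathcal{E}(f,f)}\cdot 2\sqrt{\sum_x f(x)^2\pi(x)}$, and rearranging yields $\mathcal{E}(f,f)\geq\tfrac{1}{2}\Phi^2\sum_x f(x)^2\pi(x)\geq\tfrac{1}{2}\Phi^2\mathrm{Var}_\pi(f)$.

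For the spectral bound, expanding $P^t$ in an $L^2(\pi)$-orthonormal eigenbasis $\{\varphi_i\}$ gives $P^t(x,y)/\pi(y)-1=\sum_{i\geq 2}\lambda_i^t\varphi_i(x)\varphi_i(y)$, so Cauchy--Schwarz together with Parseval's identity yields
\[
4\,d_{TV}(P^t(x,\cdot),\pi)^2\leq \sum_y \pi(y)\Big(\tfrac{P^t(x,y)}{\pi(y)}-1\Big)^2\leq (1-\gamma)^{2t}\tfrac{1}{\pi(x)}\leq \tfrac{1}{\pi_{\textrm{min}}}e^{-2\gamma t}.
\]
Choosing $t=\tfrac{2}{\Phi(G)^2}\log(4/\pi_{\textrm{min}})$ and invoking the Cheeger bound gives $2\gamma t\geq 2\log(4/\pi_{\textrm{min}})$, so the right-hand side is at most $\pi_{\textrm{min}}/16\leq 1/16$, whence $d_{TV}\leq 1/4$ and the claimed inequality on $t_{\textrm{mix}}$ follows. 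The main obstacle is the Cheeger step: the crux is the Cauchy--Schwarz/coarea manipulation converting the isoperimetric hypothesis on level sets into a bound on the Dirichlet form, and one must check carefully that the $(f(x)+f(y))^2$ term aggregates to $4\sum_x f(x)^2\pi(x)$ via symmetry of $Q$, as well as justify the reduction to nonnegative $f$ of small support using $\mathrm{Var}_\pi(f)\leq\mathrm{Var}_\pi(f_+)+\mathrm{Var}_\pi(f_-)$ after median subtraction; the spectral mixing bound is then standard bookkeeping via reversibility.
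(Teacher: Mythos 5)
The paper does not prove this result: it is cited verbatim as Theorem 17.10 of Levin--Peres--Wilmer \cite{LPW17}, which in turn rests on the Sinclair--Jerrum/Lawler--Sokal Cheeger inequality and a spectral bound on total variation distance. Your proposal correctly reconstructs exactly that two-ingredient argument, and the computations check out: the coarea/layer-cake identity together with Cauchy--Schwarz in the symmetric measure $Q$ gives $\mathcal{E}(g,g)\geq\tfrac{1}{2}\Phi^2\sum_x g(x)^2\pi(x)$ for nonnegative $g$ of $\pi$-small support; the $L^2(\pi)$-Parseval bound gives $4\,d_{TV}(P^t(x,\cdot),\pi)^2\leq(1-\gamma)^{2t}/\pi(x)$; and plugging in $t=\tfrac{2}{\Phi^2}\log(4/\pi_{\textrm{min}})$ gives $4d_{TV}^2\leq\pi_{\textrm{min}}/16$, hence $d_{TV}\leq 1/8<1/4$. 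The only imprecision is in how you phrase the reduction to nonnegative functions of small support: the statement $\mathrm{Var}_\pi(f)\leq\mathrm{Var}_\pi(f_+)+\mathrm{Var}_\pi(f_-)$ is not what is used. Rather, with $c$ a $\pi$-median and $g:=f-c$, one uses $\mathrm{Var}_\pi(f)\leq\mathbb{E}_\pi[g^2]=\mathbb{E}_\pi[g_+^2]+\mathbb{E}_\pi[g_-^2]$ together with $\mathcal{E}(f,f)=\mathcal{E}(g,g)\geq\mathcal{E}(g_+,g_+)+\mathcal{E}(g_-,g_-)$, each of $g_\pm$ having $\pi$-support at most $\tfrac12$ by the median property; this is clearly the argument you intend, and the rest of the proof is sound.
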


Theorem \ref{t:mixingtime} is then an immediate consequence of Lemma \ref{l:Cheeger} and Theorem \ref{t:mix}

\begin{proof}[Proof of Theorem \ref{t:mixingtime}]
We note that whp $|E(Q^d_p)| \leq n$ and so $\pi_{\textrm{min}} \geq \frac{1}{2n}$. Hence, by Lemma \ref{l:Cheeger} and Theorem \ref{t:mix}, whp
\[
t_{\textrm{mix}} \leq \frac{2}{\Phi(L_1)^2} \log \left( \frac{4}{\pi_{\textrm{min}}} \right) = O\left( d^{11} \right).
\]
\end{proof}

\subsection{Diameter}\label{s:diameter}
It is immediate from Theorem \ref{t:expansion} that whp the giant component of $Q^d_p$ in the supercritical regime has diameter $O\left(d^6\right)$. However, with a more careful argument we can improve this crude estimate.

\begin{proof}[Proof of Theorem \ref{t:diameter}]
We argue as in the proof of Theorem \ref{t:expansion}, using the same terminology. In particular we take $q_1,q_2, L'_1$, $L_1$, $b(s)$, $\mathcal{C}(s)$ and constants $\beta,c_1,c_2,\ldots$ as in the proof.

Note, by Lemma \ref{l:treedecomp}, for each $1 \leq s \leq \frac{|V(L_1)|}{2}$ each piece in $\mathcal{C}(s)$ has diameter at most ${r(s) :=2 c_8^{-1}b(s)^{-1}d}$. Also, as before it follows from Lemma \ref{l:outsidegiant} that 
\begin{equation}\label{e:restcomp}
\text{whp every component of $R:=L_1 - L'_1$ has order at most $c_4^{-1}d$.} 
\end{equation}

Furthermore, by Claim \ref{c:pathsmall}, whp for any $1 \leq t \leq 2s \leq |V(L_1)|$ and any partition of $\mathcal{C}(s)$ into two sets $\{\mathcal{C}_A, \mathcal{C}_B\}$, where $A := \bigcup \mathcal{C}_A$ and  $B := \bigcup \mathcal{C}_B$, with $\min \{|A|,|B| \} = t$ there is a family of at least $c_7 b(t) t$ vertex-disjoint $A\textrm{-}B$-paths of length at most five in $Q^d_{q_2}$. Let us assume that both of these likely events hold.

Let $v$ be an arbitrary vertex in $L'_1$ and let $S(0) = \{v\}$. We recursively define a sequence of vertex sets $S(i)$ as follows: given $S(i)$, let $t_i:= |S(i)|$ and let $S'(i)$ be the union of all pieces of $\mathcal{C}(t_i)$ which contain vertices in $S(i)$. If $t'_i:= |S'(i)| \geq \min \left\{ \frac{|V(L'_1)|}{2}, 2t_i \right\}$, then we let ${S(i+1) = S'(i)}$, otherwise by the above assumption there is a family of at least $c_7 t'_i b(t'_i)$ paths of length at most five in $Q^d_{q_2}$ between $S'(i)$ and its complement in $V(L'_1)$. In this case, we let $S(i+1)$ be $S'(i)$ together with the endpoints of these paths. 

We note that, since the diameter of each piece of $\mathcal{C}(t_i)$ is at most $r(t_i)$, we have that 
\begin{equation}\label{e:increment}
S(i+1) \subseteq N^{r(t_i)+5}_{Q_2}(S_i).
\end{equation}

Since $f(t) = t b(t)$ is an increasing function of $t$ for $t \leq \frac{n}{2}$, it follows that, for each $i\geq 0$ with $t_i \leq \frac{|V(L'_1)|}{2}$, we have 
\begin{equation}
t_{i+1} \geq \min \{ t'_i + c_7 t'_i b(t'_i) , 2t_i \} \geq t_i\left(1 + c_7b(t_i)\right).
\end{equation}

So, let us analyse the growth rate of the sequence $(x_i)$ defined recursively as $x_0=1$ and ${x_{i+1} = x_i\cdot\left(1 + c_7b(x_i)\right)}$. 

We claim that 
\[
\text{for any $0 < \epsilon \leq \frac{1}{2}$, if $2^{(1-2\epsilon)d}  \leq x_i \leq 2^{(1-\epsilon)d}$, then $x_{i+ 2c_7^{-1} d} \geq 2^{(1-\epsilon)d}$.}
\]
Indeed, for any $x_j \leq 2^{(1-\epsilon)d}$ we have that $b(x_j) \geq \epsilon$ and hence
\[
x_{i+2c_7^{-1} d} \geq \min \left\{ 2^{(1-\epsilon)d}, x_i\left(1 + c_7\epsilon \right)^{2c_7^{-1} d}\right\} \geq \min \left\{ 2^{(1-\epsilon)d}, 2^{(1-2\epsilon)d} e^{\epsilon d}\right\} \geq 2^{(1-\epsilon)d},
\]
using that $(1+\alpha) \geq e^{\frac{\alpha}{2}}$ for any $0 \leq \alpha \leq \frac{1}{2}$. 

It follows that for any $\epsilon >0$ there are at most $2c_7^{-1} d$ many $i$ such that $2^{(1-2\epsilon)d}  \leq t_i \leq 2^{(1-\epsilon)d}$. Note that, if $2^{(1-2\epsilon)d} \leq t_i \leq 2^{(1-\epsilon)d}$, then $r(t_i) \leq 2c_8^{-1} \epsilon^{-1} d$.

Hence, if we let $I_j =\left\{ i \colon 2^{(1-2^{-j})d} \leq t_i \leq 2^{(1-2^{-(j+1)})d} \right\}$ for each $j=0, 1, \ldots j_{\textrm{max}}$, where $j_{\textrm{max}}$ is minimal such that $2^{(1-2^{-(j_{\textrm{max}}+1)})d} > \frac{|V(L_1)|}{2}$, then $|I_j| \leq 2c_7^{-1} d$ for each $j$ and so
\begin{align*}
\sum_{i \colon t_i \leq \frac{|V(L_1)|}{2}} \left(r(t_i)+5\right) \leq \sum_{j=0}^{j_{\textrm{max}}} \sum_{i \in I_j} \left(r(t_i)+5\right) \leq \sum_{j=0}^{j_{\textrm{max}}} \sum_{i \in I_j} \left( 2c_8^{-1} 2^{j+1}d+5 \right)= O\left(d^2 \sum_{j=0}^{j_{\textrm{max}}}  2^{j+1} \right)= O\left(d^3 \right).
\end{align*}

In particular, by \eqref{e:increment} there is some constant $C$ such that for each $v \in V(L'_1)$
\[
\left|N^{Cd^3}_{Q_2}(v) \cap V(L'_1)\right| > \frac{|V(L'_1)|}{2},
\]
and so the distance between any two vertices in $L'_1$ in $Q_2$ is at most $2Cd^3$.

Finally, by \eqref{e:restcomp} every vertex in $L_1$ is at distance at most $c_4^{-1}d$ from a vertex in $L'_1$ and hence the diameter of $L_1$ is at most
\[
2Cd^3 + 2c_4^{-1}d = O\left(d^3 \right).
\]
\end{proof}

\subsection{Long cycles and large minors}\label{s:cycleminor}
\begin{proof}[Proof of Theorem \ref{t:cycles}]
Let $\alpha \ll \epsilon$ and let $L_1$ be the largest component of $Q^d_p$. Then by Theorem \ref{t:expansion} \ref{i:large} whp every $S \subseteq V(L_1)$ such that $\alpha n \leq |S| \leq \frac{|V(L_1)|}{2}$ satisfies ${\left|N_{Q^d_p}(S)\right|  \geq  \beta n d^{-2} (\log d)^{-1}}$.

Hence, applying Theorem \ref{t:cycleexpander} with $k = \frac{|V(L_1)|}{2}$ and $t = \beta n d^{-2} (\log d)^{-1}$, we can conclude that $L_1$ contains a cycle of length $\Omega\left( n d^{-2} (\log d)^{-1}\right)$.

\end{proof}

\begin{proof}[Proof of Theorem \ref{t:minors}]
Let $\alpha \ll \epsilon$ and let $L_1$ be the largest component of $Q^d_p$. Again by Theorem \ref{t:expansion} \ref{i:large} whp every $S \subseteq V(L_1)$ such that $\alpha n \leq |S| \leq \frac{|V(L_1)|}{2}$ satisfies $\left|N_{Q^d_p}(S)\right|  \geq  \beta n d^{-2} (\log d)^{-1}$.

 If $L_1$ does not contain a $K_t$-minor, then by Theorem \ref{t:minorexpander} there is some constant $C >0$ such that $V(L_1)$ contains a subset $X$ of size at most $C t \sqrt{|V(L_1)|} \leq Ct \sqrt{n}$, such that each component of $G - X$ has order at most $\frac{2|V(L_1)|}{3}$. It follows that there is some subset $S \subseteq V(L_1)$, which is the union of some components of $G-X$, such that $\frac{|V(L_1)|}{3}\leq |S| \leq\frac{|V(L_1)|}{2}$ and $N_{Q^d_p}(S) \subseteq X$, and so by Theorem \ref{t:expansion} \ref{i:large}
\[
\beta n d^{-2} (\log d)^{-1} \leq \left|N_{Q^d_p}(S)\right| \leq |X| \leq Ct \sqrt{n}.
\]
It follows that $t \geq\frac{\beta \sqrt{n}}{ Cd^2 \log d } = \Omega \left( \sqrt{n} d^{-2} (\log d)^{-1}\right)$.
\end{proof}

\section{Discussion}\label{s:discussion}

Theorem \ref{t:expansionsimple1} gives a good bound on the likely expansion of the giant component of $Q^d_p$ in the supercritical regime, although it is unlikely to be optimal in terms of its dependence on $d$. It would be interesting to determine the optimal expansion ratio.

However, perhaps this is not quite the right question to ask. As explained in the introduction, in the case of $G(d+1,p)$, whp the giant component itself is not an $\alpha$-expander for any constant $\alpha>0$, but whp contains a linear sized subgraph which is. Analogously, it seems likely that there should be a large subgraph of the giant component of $Q^d_p$ which will have a significantly better expansion ratio than that of the giant component itself.

\begin{question}
For what $\alpha = \alpha(d)$ is it true that whp $Q^d_p$ contains a subgraph of size $\Omega(n)$ which is an $\alpha$-expander?
\end{question}

As we saw in Theorem \ref{t:expansionsimple2}, we can take $\alpha(d) = d^{-2} (\log d)^{-1}$. We note that some inverse polynomial power of $d$ is still necessary in the expansion ratio here. 
\begin{claim}\label{c:notexpander}
Let $\delta > 0$, let $\alpha \in (0,1)$ and let $p = \frac{\delta}{d}$. Then there exists a $C=C(\delta,\alpha)$ such that whp there are no subsets $W \subseteq V(Q^d)$ of size at least $\alpha n$ such that $Q^d_p[W]$ is a $\frac{C}{d}$-expander.
\end{claim}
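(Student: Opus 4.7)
The approach is to exhibit, for any set $W$ with $|W| \geq \alpha n$, a subset $S \subseteq W$ of size between $|W|/3$ and $|W|/2$ whose boundary in $Q^d_p[W]$ is confined to a single ``direction'' of the hypercube, which whp contains only $O(n/d)$ percolated edges.

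First, for each coordinate $i \in [d]$ let $E_i$ denote the set of edges of $Q^d$ that flip coordinate $i$; these partition $E(Q^d)$ and satisfy $|E_i| = n/2$. Then $|E_i \cap E(Q^d_p)| \sim \text{Bin}(n/2, \delta/d)$ has mean $\delta n/(2d)$, and Lemma \ref{l:Chernoff} yields $|E_i \cap E(Q^d_p)| \leq \delta n/d$ with probability $1 - \exp(-\Omega(n/d))$. A union bound over the $d$ coordinates then shows that whp
\[
|E_i \cap E(Q^d_p)| \leq \delta n/d \qquad \text{for every } i \in [d].
\]
We assume this event holds; the remainder of the argument is deterministic.

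Given $W \subseteq V(Q^d)$ with $|W| \geq \alpha n$, I would next use an entropy argument to find a coordinate that splits $W$ roughly in half. Set $\rho_i := |\{w \in W : w_i = 1\}|/|W|$ and let $w$ be uniform on $W$. Then by Lemma \ref{l:entropy}, $\log_2 |W| = H(w) \leq \sum_i H(w_i) = \sum_i H(\rho_i)$, so $\sum_i H(\rho_i) \geq d + \log_2 \alpha$. If every $\rho_i$ lay outside $[1/3, 2/3]$, then each $H(\rho_i) \leq H(1/3) < 1$, forcing $d\,H(1/3) \geq d + \log_2 \alpha$, which fails once $d$ is sufficiently large in terms of $\alpha$. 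Hence some coordinate $i^*$ satisfies $\rho_{i^*} \in [1/3, 2/3]$, so both $W_{i^*}^0 := \{w \in W : w_{i^*} = 0\}$ and $W_{i^*}^1 := \{w \in W : w_{i^*} = 1\}$ have size at least $|W|/3$.

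Finally, I would choose $b \in \{0,1\}$ so that $|W_{i^*}^b| \leq |W|/2$ and set $S := W_{i^*}^b$. Then $|W|/3 \leq |S| \leq |W|/2$, and every $Q^d$-edge between $S$ and $W \setminus S$ flips coordinate $i^*$ and so lies in $E_{i^*}$. Each vertex in $N_{Q^d_p[W]}(S)$ requires at least one such edge of $Q^d_p$, so
\[
\frac{|N_{Q^d_p[W]}(S)|}{|S|} \leq \frac{|E_{i^*} \cap E(Q^d_p)|}{|W|/3} \leq \frac{3\delta}{\alpha d},
\]
and taking $C := 4\delta/\alpha$ shows that $Q^d_p[W]$ is not a $(C/d)$-expander. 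There is no serious obstacle; the only mild subtlety is that the entropy step requires $d \geq d_0(\alpha)$, which is harmless because the conclusion is asymptotic in $d$.
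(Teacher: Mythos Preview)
Your proof is correct and follows essentially the same approach as the paper: a Chernoff bound to control the number of percolated edges in each coordinate direction, followed by an entropy argument to locate a coordinate that splits $W$ into two comparably large halves, one of which then witnesses the failure of expansion. The only cosmetic difference is that the paper extracts an abstract constant $\beta$ from the inequality $\max_i h(p_i)\ge\frac12$ whereas you pin down the explicit interval $[1/3,2/3]$; both versions require $d$ large in terms of $\alpha$, which, as you note, is harmless.
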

\begin{proof}
We first note that for any $i \in [d]$ there are $\frac{n}{2}$ edges `in direction $i$' in $Q^d$, that is, between two vertices which differ in the $i$th coordinate. Hence, it is a simple consequence of Lemma \ref{l:Chernoff} that there is some $C'=C'(\delta)>0$ such that whp for any $i \in [d]$ there are less than $\frac{C'n}{d}$ edges in direction $i$.

We will show that there is some $\beta >0$ such that for any subset $W \subseteq V(Q^d)$ of size at least $\alpha n$ there is some $i \in [d]$ such that 
\[
W_i^0 := \{ v \in W \colon v_i = 0\} \qquad \text{ and } \qquad W_i^1 := \{ v \in W \colon v_i = 1\}
\]
both have size at least $\alpha \beta n$. By our assumption on the number of edges in direction $i$, 
\[
\left|N\left(W_i^j\right)\right| < \frac{C'n}{d} < \frac{C'\left|W_i^j\right|}{\alpha\beta d}, \qquad \text{ for } j \in \{0,1\}. 
\]
However, since there is some $j \in \{0,1\}$ such that $\left|W^j_i\right| \leq \frac{|W|}{2}$, it follows that $Q^d_p[X]$ is not a $\frac{C'}{\alpha \beta d}$-expander, and so the claim holds with $C =\frac{C'}{\alpha \beta}$.

A simple way to show the existence of $\beta$ is using the language of discrete entropy. Let $X$ be a uniformly random chosen element of $W$ and let $X_i$ be the projection of $X$ to the $i$th coordinate. Note that $X_i \sim \text{Ber}(p_i)$ where $p_i$ is the proportion of $v \in W$ with $v_i =1$, i.e., $p_i = \frac{|W_i^1|}{|W|}$.

Then, by Lemma \ref{l:entropy} \ref{i:unifent}, $H(X) = \log_2 (|W|) \geq d + \log \alpha \geq \frac{d}{2}$ and, by Lemma \ref{l:entropy} \ref{i:jointent},
\[
H(X) \leq \sum_{i=1}^d H(X_i) = \sum_{i=1}^d h(p_i) \leq d \max_i h(p_i)
\]
where $h(x) = - x \log_2 x - (1-x)\log_2 (1-x)$ is the binary entropy function. In particular, it follows that ${\max_i h(p_i) \geq \frac{1}{2}}$. However, since $h$ is symmetric around $\frac{1}{2}$ and increasing on $[0,\frac{1}{2}]$, we can conclude that there is some $\delta$ such that $\min \{p_i,1-p_i\} \geq \beta$. Hence
\[
\left|W_i^j\right| = |W| \cdot \mathbb{P}(X_i = j) \geq \beta|W| \geq \alpha \beta n, \qquad \text{ for } j \in \{0,1\},
\]
as claimed.
\end{proof}
As consequences of the expansion properties of the giant component, we deduced bounds on its mixing time, diameter, circumference and Hadwiger number which are almost optimal, up to some polynomial factors in $d$. However, it seems unlikely that any of these results are optimal in terms of their dependence in $d$.

For the diameter and the mixing time it is not clear what the `correct' answer should be. It seems likely, but it is not immediate, that $Q^d_p$ should have larger diameter and mixing time than $Q^d$ does. For the diameter it might be that $O(d)$ is the correct order of growth. However, whilst the mixing time of the lazy random walk on $Q^d$ is known to be $O(d \log d)$, see, e.g., \cite{LPW17}, it can be shown that the largest component $L_1$ of $Q^d_p$ whp contains bare paths of length $\Omega(d)$. In particular, since we expect a lazy random walk starting in the middle of such a path to take at least $\Omega(d^2)$ steps before reaching either endpoint, it follows that whp the mixing time of the lazy random walk on $L_1$ is $\Omega\left(d^2\right)$.

In the case of $G(d+1,p)$, Benjamini, Kozma and Wormald \cite{BKW14} used their description of the structure of the giant component as a decorated expander to give a $\Theta\left((\log d)^2\right)$ bound on the mixing time of the lazy random walk on the giant component in the supercritical regime. Roughly, the idea here is that the lazy random walk mixes quickly, in time $O(\log d)$, inside the expanding subgraph, but it might end up making detours of length $\Omega\left((\log d)^2\right)$ inside the decorations. If a similar description of the giant component of $Q^d_p$ were to hold, then we might hope that the expanding subgraph has expansion ratio $\alpha$ which is a small inverse power of $d$, and these decorations have size $O(d)$, see Lemma \ref{l:outsidegiant}, in which case perhaps a more reasonable hope would be that the mixing time of the lazy random on the giant component is either dominated by the mixing time on the $\alpha$-expanding subgraph, which would be of order something like $\alpha^{-2}$, or by the length of the detours in the decorations, of order something like $d^2$. Note that, by Claim \ref{c:notexpander}, both of these terms must be $\Omega(d^2)$.

\begin{question}
Let $\epsilon >0$, let $p=\frac{1+\epsilon}{d}$ and let $L_1$ be the largest component of $Q^d_p$.
\begin{itemize}
\item How large is the likely diameter of $L_1$?
\item What is the likely mixing time of the lazy random walk on $L_1$?
\end{itemize}
\end{question}

In the context of Bollob\'{a}s, Kohayakawa and {\L}uczak's \cite{BKL94d} question about when the diameter of the largest component of $Q^d_p$ is superpolynomial in $d$, it would be interesting to know if our results can be extended partially into the weakly supercritical regime, for example when $\epsilon=o(1)$ is significantly larger than $d^{-1}$ which was the regime considered in \cite{BKL92}. 
%We note that, since the critical probability for the hypercube is known to be $p_c = \frac{1}{d-1} +\frac{7}{2d^{3}} + O(d^{-4})$, see for example \cite{HN17}, we do not expect our methods to be effective for $\epsilon = O(d^{-2})$, since they are not precise enough to `differentiate' between $\frac{1}{d}$ and $\frac{1}{d-1}$.

In terms of the circumference and the Hadwiger number of $Q^d_p$ there are more natural conjectures to make, analogous to the case of $G(d+1,p)$, which is that whp $Q^d_p$ contains a cycle whose length is linear in $n$ and a complete minor of size $\Omega(\sqrt{n})$. However it seems unlikely that it is possible to prove such sharp results simply by considering the expansion properties of the giant component. In particular, we note that it may be the case that it is easier to show the likely existence of a linear length path, than that of a cycle.

\begin{question}Let $\epsilon>0$ and $p=\frac{1+\epsilon}{d}$.
\begin{enumerate}
\item Is it the case that whp $Q^d_p$ contains a path of length $\Omega(n)$?
\item Is it the case that whp $Q^d_p$ contains a cycle of length $\Omega(n)$?
\item Is it the case that whp $Q^d_p$ contains a complete minor of order $\Omega\left(\sqrt{n}\right)$?
\end{enumerate}
\end{question}

Furthermore, in the case of a positive answer it would also be interesting to know the dependence of the leading constants on $\epsilon$. For example, in $G(d+1,p)$ it is known that for $p = \frac{1+\epsilon}{d}$ the giant component is of order $(2\epsilon + o(\epsilon))d$, the length of the longest cycle is of order $\Theta\left(\epsilon^2\right) d$ (see, for example, \cite[Theorem 5.7]{JLR00}) and the size of the largest complete minor is of order $\Theta\left(\epsilon^\frac{3}{2}\right) \sqrt{d}$ (see \cite{FKO09}).

There are also some interesting open questions about the model $Q^d_p$ in the paper of Condon, Espuny D{\'\i}az, Gir{\~a}o, K{\"u}hn and Osthus \cite{CDGKO21}. In particular, they used as a crucial part of their proof the fact that whp $Q^d_{\frac{1}{2}}$ contains an `almost spanning' path, that is, a path containing $(1-o(1))n$ vertices, and they showed that this property is in fact true for $Q^d_p$ for any constant $p$. 

However, analogous to the case of $G(d+1,p)$, we should perhaps expect such a path to exist for much smaller values of $p$. In particular, if we expect the sparse random subgraph $Q^d_{p}$ with $p=\frac{c}{d}$ to contain a path of linear length $f(c)n$ for some function $f(c)$ when $c > 1$, it is natural to conjecture that $f(c) \to 1$ as $c \rightarrow \infty$.

\begin{question}[\cite{CDGKO21}]
Let $p = \omega\left(\frac{1}{d}\right)$. Is it true that whp $Q^d_p$ contains a path of length $(1-o(1))n$?
\end{question}

Finally, other notions of random subgraphs of the hypercube $Q^d$ have also been studied. In particular, if we let $Q^d(p)$ denote a random \emph{induced} subgraph of $Q^d$, obtained by retaining each \emph{vertex} independently with probability $p$, then the typical existence of a giant component in $Q^d(p)$ when $p=\frac{1+\epsilon}{d}$ for a fixed $\epsilon >0$ was shown by Bollob\'{a}s, Kohayakawa and {\L}uczak \cite{BKL94}, and this was extended to a broader range of $p$ with $\epsilon=o(1)$ by Reidys \cite{R09}. It would be interesting to know if whp the giant component in $Q^d(p)$ also has good expansion properties. We note that random induced subgraphs of pseudo-random $d$-regular graphs have been studied by Diskin and Krivelevich \cite{DK21}, who in particular prove some likely expansion properties of the giant component in a supercritical random induced subgraph.

\section*{Acknowledgement}
The third author wishes to thank Asaf Nachmias for stimulating discussions and helpful remarks.

	\bibliographystyle{plain}
	\bibliography{cube}
	
\appendix

\end{document}